\mathchardef\emptyset="001F
\newcommand{\e}{\varepsilon}
\newcommand{\meno}{\setminus}
\newtheorem{teo}{Theorem}[section]
\newtheorem{cor}[teo]{Corollary}
\newtheorem{prop}[teo]{Proposition}
\newtheorem{lm}[teo]{Lemma}
\theoremstyle{definition}
\newtheorem{dfn}[teo]{Definition}
\theoremstyle{remark}
\newtheorem{rmq}[teo]{Remark}
\newtheorem{example}[teo]{Example}
\newtheorem*{ack}{Acknowledgments}
\numberwithin{equation}{section}
\def\XXint#1#2#3{{\setbox0=\hbox{$#1{#2#3}{\int}$} \vcenter{\vspace{-1pt}\hbox{$#2#3$}}\kern-.5\wd0}}
\def\Xint#1{\mathchoice {\XXint\displaystyle\textstyle{#1}}{\XXint\textstyle\scriptstyle{#1}}{\XXint\scriptstyle\scriptscriptstyle{#1}}{\XXint\scriptscriptstyle\scriptscriptstyle{#1}}\!\int}
\def\intmed{\Xint{-}}
\def\Limsup{\mathop{\smash\limsup\vphantom\liminf}}
\newcommand{\mres}{\mathbin{\vrule height 1.6ex depth 0pt width
0.13ex\vrule height 0.13ex depth 0pt width 1.3ex}}
\title[Transmission conditions obtained by homogenisation]{Transmission conditions obtained by homogenisation}
\author{Gianni Dal Maso}
\address[G.\@ Dal Maso]{Scuola Internazionale Superiore di Studi Avanzanti, Trieste, Italy}
\email{dalmaso@sissa.it}
\author{Giovanni Franzina}
\address[G.\@ Franzina]{Istituto Nazionale di Alta Matematica, Unit\`a di Ricerca di Firenze - DiMaI ``U. Dini'', Universit\`a di Firenze, Italy}
\email{giovanni.franzina@unifi.it}
\author{Davide Zucco}
\address[D.\@ Zucco]{Dipartimento di Matematica ``G. Peano'', Universit\`a di Torino, Italy}
\email{davide.zucco@unito.it}
\thanks{Preprint SISSA 11/2018/MATE}
\begin{document}

\maketitle

\begin{abstract}
Given a bounded open set in $\mathbb{R}^n$, $n\ge 2$, and a sequence $(K_j)$ of compact sets converging to an $(n-1)$-dimensional manifold $M$, we study the asymptotic behaviour of the solutions to some minimum problems for integral functionals on $\Omega\setminus K_j$, with Neumann boundary conditions on $\partial(\Omega\setminus K_j)$. We prove that the limit of these solutions is a minimiser of the same functional on $\Omega\setminus M$ subjected to a transmission condition on $M$, which can be expressed through a measure $\mu$ supported on $M$. The class of all measures that can be obtained in this way is characterised, and the link between the measure $\mu$ and the sequence $(K_j)$ is expressed by means of suitable local minimum problems.
\end{abstract}

\bigskip

\bigskip

\noindent{\textbf{Keywords.} $\Gamma$-convergence, capacitary measures, Neumann sieve.}


\section{Introduction}

This paper studies the asymptotic behaviour of the solutions $u_j$ to the equations
\[
	-\Delta u_j +u_j=h \qquad \text{in $\Omega\setminus K_j$,}
\]
with homogeneous Neumann boundary conditions on $\partial (\Omega\setminus K_j)$.
Here and henceforth $\Omega$ is a bounded open set in  $\mathbb{R}^n$, $n\ge 2$,
and $(K_j)$ is a sequence of compact sets in $\mathbb{R}^n$. We assume that there exists a
compact $(n-1)$-dimensional $C^1$ manifold $M$
with boundary, contained in $\Omega$,
such that
\begin{equation}
\label{eq:1.1}
K_j \subset \big\{ x\in\mathbb{R}^n\,\,\colon\,\, {\rm dist}(x,M)\le\rho_j\big\}\,,
\end{equation}
for a suitable sequence of positive numbers $(\rho_j)$ with $\rho_j\to0^+$ as $j\to\infty$. 

It is well known that $u_j$ is the minimiser of the functional
\begin{equation*}
	\int_{\Omega\setminus K_j}|\nabla u|^2dx +\int_{\Omega\setminus K_j} u^2dx - 2\int_{\Omega\setminus K_j}hu\, dx\,,
\end{equation*}
and this property will be the starting point of our analysis.

More in general, in this paper we will consider sequences of minimisers of functionals
of the form
\begin{equation}
\label{fct0}
\int_{\Omega\meno K_j} f(\nabla u)\,dx  +\int_{\Omega\setminus K_j} g(x,u)\,dx\, ,
\end{equation}
where $f$ and $g$ are suitable functions satisfying standard convexity and growth conditions
(see Section~\ref{subsec:2.1} for details).  A significant instance included in our analysis will be that of the functions $f(\nabla u)=|\nabla u|^p$, with $1<p\le n$, and $g(x,u)=|u-h(x)|^q$, with $1\le q<\infty$ and $h\in L^q(\Omega)$.
\medskip

This kind of question is related with the so-called \emph{Neumann sieve problem} that was proposed by Sanchez and Palencia, who gave in \cite{SP2} a formal asymptotic expansion of the solution. It was then studied by Attouch, Damlamian, Murat, and Picard (see \cite{AP, D, M, P})  in the case where the perforations are composed of open balls periodically distribuited over the manifold. For related studies on the asymptotic behaviour of periodically-perforated domains see \cite{AB, AB2, C1, C2, C3, DV, SP1, SP3}.

\medskip
A slight modification (see Section~\ref{sec:3}) of the results of~\cite{Cor} shows that, under our general conditions, there exists a subsequence of $(K_j)$ (not relabelled) satisfying \eqref{eq:1.1} such that the minimisers $u_j$ of \eqref{fct0} converge in $L^q(\Omega)$ as $j\to\infty$ to the minimiser of a functional of the form
\begin{equation}
\label{eq:1.5}
				\displaystyle 
				\int_{\Omega\meno M} f(\nabla u)\,dx + \int_{M} [u]^p\,{d}\mu  
				+\int_{\Omega\setminus M} g(x,u)\,dx\,, \quad u\in L^{1,p}(\Omega\meno M)\,,
\end{equation}
where $\mu$ is a suitable  Borel measure on $\Omega$, concentrated on $M$ and vanishing on all Borel sets $B$ with $C_{1,p}(B)=0$ (see Section~\ref{subsec:2.2} for the definition of the $p$-capacity $C_{1,p}$ and Section~\ref{subsec:2.3} for details on these $p$-capacitary measures).
Here and henceforth $L^{1,p}$ denotes the Deny-Lions space, while the jump of $u$ is defined  by
\( [u]:=|u^+-u^-|\), where $u^+$ and $u^-$ are the measure-theoretic limits
of $u$ at $x$ on both sides of $M$ (see again Section~\ref{subsec:2.2}). The measure $\mu$ appearing in \eqref{eq:1.5} is independent of $g$ and depends only on the sequence of compact sets $(K_j)$ and on the energy density $f$. 

In some special cases, a suitable choice of the sequence of compact sets $(K_j)$ allows for an explicit computation of the measure $\mu$.
For example, if $K_j=\emptyset$ for all $j$, then one obtains in the limit the measure $\mu$ defined for every Borel set $B$ as 
\[
	\mu(B) = \begin{cases}
	0\,, & \ \text{if } C_{1,p}(B\cap M)=0\,,\\
	\infty\,, &\ \text{otherwise.}
	\end{cases}
\]
In this case, the finiteness of the functional \eqref{eq:1.5} implies that $[u]=0$ on $M$ so that  \eqref{eq:1.5} reduces to
\begin{equation*}
				\displaystyle 
				\int_{\Omega} f(\nabla u)\,dx  +\int_{\Omega}g(x,u)\,dx\,, 
				\quad u\in L^{1,p}(\Omega)\,.
\end{equation*}

Moreover, if $K_j=\{x\in\mathbb R^n\colon {\rm dist}(x,M)\le\rho_j\}$, then $\mu=0$ and the corresponding limit functional \eqref{eq:1.5} becomes
\begin{equation*}
				\displaystyle 
				\int_{\Omega\setminus M} f(\nabla u)\,dx  +\int_{\Omega\setminus M} g(x,u)\,dx\,,
				\quad u\in L^{1,p}(\Omega\setminus M)\,.
\end{equation*}

From our point of view the most interesting case is when the limit measure is of the form
$\mu = \theta\mathscr H^{n-1}\mres M$ for a function $\theta\in L^{p'}(M,\mathscr H^{n-1})$ where $p'=p/(p-1)$  (see Section~\ref{sec:density}).
In this case, under suitable regularity assumptions on $f$
the first order minimality conditions lead to a partial differential equation
in $\Omega\meno M$ with suitable \emph{transmission conditions} across $M$. For instance,
when $f(\nabla u)=|\nabla u|^2$ and $g(x,u)=|u-h(x)|^2$ for some $h\in L^2(\Omega)$,
the minimiser of \eqref{eq:1.5} satisfies
\[
\begin{cases}
	-\Delta u +u=h\,, & \ \text{in $\Omega\setminus M$,}\\
	\partial_\nu u=0\,, & \ \text{on $\partial \Omega$,}\\
	(\partial_\nu u)^+=	(\partial_\nu u)^-=\theta(u^+-u^-) \,,& \ \text{on $M$,} 
\end{cases}
\]
where $\nu$ is a unit normal. A particular case when $\theta$ is constant and $M$ is a hyperplane has been investigated in \cite{ansini}.

In this paper we prove a density result (see Section~\ref{sec:density}), which shows that every measure 
vanishing on sets of $p$-capacity zero can appear in the limit problem \eqref{eq:1.5}. More
precisely, given $f$, $M$, and $\mu$,
we prove that there exists a sequence $(K_j)$ of compact sets, satisfying \eqref{eq:1.1}, such that,
for every $g$, the minimisers of \eqref{fct0} converge in $L^q(\Omega)$
to the minimiser of \eqref{eq:1.5}. For this result the hypothesis $p\le n$ is crucial. 

We also prove an asymptotic formula which allows us to obtain the measure
$\mu$ starting from some auxiliary minimum problems which involve $f$ and the $K_j$'s
(see Section~\ref{reconstruction}). Moreover, for a given sequence $(K_j)$ of compact sets we provide necessary and sufficient conditions to establish when we have full convergence of the minimisers of \eqref{fct0} to the minimiser of \eqref{eq:1.5} (i.e., without passing to a subsequence). 

\begin{ack}
The authors wish to thank Dorin Bucur for useful discussions about this paper.
This material is based on work supported by the Italian 
Ministry of Education, University, and
Research under the Project ``Calculus of Variations" (PRIN 2015).
All authors are members of the Gruppo Nazionale per l'Analisi Matematica,
la Probabilit\`a e le loro Applicazioni (GNAMPA) of the Istituto Nazionale di Alta Matematica (INdAM),
which provided funding for GF and DZ.
\end{ack}

\section{Technical tools}
\subsection{Assumptions}\label{subsec:2.1}
Throughout the paper, we fix a bounded open set $\Omega\subset\mathbb{R}^n$, $n\ge 2$, and a function $f\colon\mathbb{R}^n\to[0,\infty)$ such that
\begin{subequations}
\label{A}
\begin{align}
& \text{$f(\cdot)$ is convex, even, and positively homogeneous of degree $p$;}
	\label{Ab}\\
 & \lambda |\xi|^p\le f(\xi)\le \Lambda |\xi|^p\,,\ \text{for all $\xi\in\mathbb{R}^n$,}\label{Ac}
\end{align}
\end{subequations}
for suitable constants $1<p\le n$ and $0<\lambda<\Lambda<\infty$.
We also fix a function $g:\Omega\times\mathbb R\to[0,\infty)$ such that
\begin{subequations}\label{C}
\begin{align}
& \text{$g(\cdot,s)$ is measurable for all $s\in\mathbb{R}$;} \label{C1} \\
& \text{$g(x,\cdot)$ is continuous for a.e.\ $x\in\Omega$;} \label{C2}\\
& \text{ $c_1 |s|^q- a_1(x)\le g(x,s)\le c_2(1+|s|^q+a_2(x))$ for a.e.\ $x\in\Omega$ and all $s\in\mathbb R $,} \label{C3}
\end{align}
\end{subequations}
for suitable constants $1\le q<\infty$, $0<c_1<c_2<\infty$, and functions $a_1,a_2\in L^1(\Omega)$.  

Moreover, we fix a compact $(n-1)$-dimensional $C^1$ manifold $M$ with boundary, contained in $\Omega$, and a sequence $(K_j)$ of compact sets of $\mathbb{R}^n$ such that 
\begin{equation}
\label{eq:1.1sec3}
\lim_{j\to\infty}\max_{x\in K_j} {\rm dist}(x,M)=0\,.
\end{equation}
Eventually, $\Sigma$ will denote a fixed compact $(n-1)$-dimensional $C^1$ manifold with boundary, contained in $\Omega$, with 
$M\subset \Sigma\meno\partial\Sigma$.

\subsection{Some fine properties of Sobolev functions}\label{subsec:2.2}
Given an open set $A\subset\mathbb R^n$,
$L^0(A)$ stands for the space of all (equivalence classes of) real valued measurable functions on $A$,
endowed with the topology of the convergence in measure. Note that the topological space $L^0(A)$ is
metrisable and separable. The Sobolev space $W^{1,p}(A)$ consists, as usual, 
of all functions $u\in L^p(A)$ whose distributional gradient $\nabla u$ belongs to $L^p(A;\mathbb R^n)$. We shall also make use
of the Deny-Lions space $L^{1,p}(A)$, i.e., the set of all functions $u\in L^1_{\rm loc}(A)$ whose distributional gradient $\nabla u$ belongs to $L^p(A;\mathbb{R}^n)$. We recall that if $A$ is locally the subgraph of a Lipschitz function near $x\in\partial A$, then there exists an open neighbourhood $A'$ of $x$ such that $L^{1,p}(A\cap A')=W^{1,p}(A\cap A')$. In particular,  $L^{1,p}(A)=W^{1,p}(A)$ whenever $A$ is a
bounded Lipschitz open set. It is also known that $\{\nabla u:\, u\in L^{1,p}(A)\}$ is a closed subspace of $L^p(A;\mathbb R^n)$.
For a more detailed account about these spaces, the reader is referred to \cite{DL} and~\cite{Mazya}. 
Here and henceforth, for every $u,v\in L^0(A)$ we set $(u\wedge v)(x):=\min\{u(x),v(x)\}$ \text{and} $(u\vee v)(x):=\max\{u(x),v(x)\}$.
For every $u\in L^0(\Omega)$ and $t>0$, we denote the truncation  of $u$ at level $t$ by $u^t:=u\wedge t \vee -t$.

The $p$--capacity $C_{1,p}$ of a set $E\subset \Omega$ is defined as 
\[
C_{1,p}(E):=\inf_{u\in \mathcal U(E)}\int_{\Omega} |\nabla u|^p\,dx\,,
\]
where $\mathcal U(E)$ is the set of all $u\in W^{1,p}_0(\Omega)$ such that $u\ge1$
almost everywhere in an open neighbourhood of $E$.
Then, one says that a property holds $p$-quasi everywhere or equivalently for $p$-q.e.\ $x$, both abbreviated to $p$--q.e.,
if the points where it fails form a set of $p$--capacity zero. The usual abbreviation a.e. for almost everywhere, if not specified, always refers to the Lebesgue measure.
Given a set $E\subset \Omega$, a function $u$ defined on $E$ is said to be $p$--quasicontinuous if for every $\e>0$ there exists a set $E'$ with $C_{1,p}(E')<\e$ such that the restriction of $u$ to $E\setminus E'$ is continuous.
The notions of $p$-quasi upper and $p$-quasi lower semicontinuity are defined in a similar way.
A set $U\subset \Omega$ is said to be $p$-quasi open in $\Omega$ if for every $\varepsilon>0$ there exists an open set $A\subset \Omega$ such that $C_{1,p}(U\triangle A)<\varepsilon$, where $\triangle$ denotes the symmetric difference of sets. Given a $p$-quasi open set $U$ in $\Omega$, for every $\varepsilon>0$ there exists an open set $V\subset \Omega$ such that
$U\cup V$ is open and $C_{1,p}(V)<\varepsilon$. The definition of $p$-quasi closed is analogous and we
have that $U$ is $p$-quasi open in $\Omega$ if, and only if, $\Omega\setminus U$ is $p$-quasi closed.
It is easily seen that a set $U\subset\Omega$ is $p$-quasi open ($p$-quasi closed) if and only if its characteristic function $1_U$ is $p$-quasi lower ($p$-quasi upper) semicontinuous.
It can be proved that a function $f\colon \Omega\to [-\infty,\infty]$ is $p$-quasi lower ($p$-quasi
upper) semicontinuous if and only if the sets $\{ x\in\Omega\colon f(x)>t\}$
($\{x\in\Omega\colon f(x)\ge t\}$) are $p$-quasi open ($p$-quasi closed) for all $t\in\mathbb R$.

Given an open set $A\subset \Omega$, for all $u\in W^{1,p}(A)$ there exists a $p$--quasicontinuous function
$\tilde u$ that coincides with $u$ a.e., called the $p$--quasicontinuous representative of $u$; it is well known that $\tilde u$ is uniquely determined $p$-q.e.\ and that
\begin{equation}\label{defmed2.1}
\lim_{\rho\to0^+}\intmed_{B_{\rho}(x)}|u(y)-\tilde u(x)|\,dy=0\,,\qquad \text{for $p$-q.e.\ $x\in A$.}
\end{equation}
For a complete treatment of the notion of capacity and of the fine properties of Sobolev functions, we refer to
the books~\cite{EG,HKM,Mazya,Z}.

Incidentally, under suitable assumptions
the values of Sobolev functions can be made precise also at $p$-q.e.\ boundary point.
More precisely, if $A\subset\subset \Omega$ is an open set with Lipschitz boundary and $u\in W^{1,p}(A)$ then
there exists a $p$-quasicontinuous function on $\overline A$, which we still denote by $\tilde u$,
such that
\begin{equation}
\label{rapprqcont}
	\lim_{\rho\to0^+} \frac{1}{\rho^n}\int_{A\cap B_\rho(x)}|u(y)-\tilde u(x)|\,dy=0\,,
	\qquad \text{for $p$--q.e.\ $x\in\overline A$.}
\end{equation}
Indeed, the extension theory for Sobolev spaces implies that there exists $v\in W^{1,p}(\Omega)$
such that $v=u$ a.e. in $A$. Applying \eqref{defmed2.1} to the $p$-quasi continuous representative $\tilde v$
of $v$ we obtain that
\[
\limsup_{\rho\to0^+} \frac{1}{\rho^n}\int_{A\cap B_\rho(x)} |u(y)-\tilde v(x)|\,dy \le
 \lim_{\rho\to0^+} \frac{1}{\rho^n}\int_{B_\rho(x)}|v(y)-\tilde v(x)|\,dx=0\,,
\]
for $p$-q.e.\ $x\in\overline A$.
It is now enough to define $\tilde u$ as the restriction of $\tilde v$ to $\overline A$.
On the other hand, it is clear that \eqref{rapprqcont} uniquely determines $\tilde u$ $p$-q.e.\ on $\overline A$.

By standard properties of the traces of Sobolev functions (see, e.g.,~\cite[Theorem 3.87]{AFP},
\cite[Theorem 2, p. 181]{EG}),
\eqref{rapprqcont} implies that $\tilde u_{|_{\partial A}}$
coincides $\mathscr{H}^{n-1}$--a.e. with the trace $\gamma(u)$ of $u$
on $\partial A$, where $\mathscr{H}^{n-1}$ denotes the $(n-1)$-dimensional Hausdorff measure.
Moreover, $\tilde u_{|_{\partial A}}$ is $p$-quasicontinuous.

Since $\gamma(u)\in W^{1-1/p,p}(\partial A)$, it is possible to prove that
$\tilde u_{|_{\partial A}}$ coincides with the quasicontinuous representative of $\gamma(u)$ with respect to the
fractional capacity $C_{1-1/p,p}$, for which we refer to \cite[Section 10.4]{Mazya} or \cite[Section 2.6]{Z}. However, this last 
property will never be used in this paper.


From what noticed above a set $U$ is $p$-quasi open if, and only if, there exists $u\in W^{1,p}(\mathbb R^n)$  with $U=\{x\in\mathbb R^n\colon u(x)>0\}$. Moreover, we have the following result.

\begin{lm}\label{lm:qa0}
Let $U$ be a $p$-quasi open set in $\Omega$. Then there exists a sequence of compact sets
$(Q_k)$ with $Q_k\subset U$ and a monotonically non-decreasing sequence of functions $(\chi_k)$ with $\chi_k\in W^{1,p}_0(\Omega)$, with $0\le \chi_k\le1$, $\chi_k=0$\ in $\Omega\setminus Q_k$, and $\chi_k\to1$ $p$-q.e.\ in $U$.
\end{lm}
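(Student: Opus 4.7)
The plan is to realise $U$ as $\{u>0\}$ for some non-negative $u\in W^{1,p}(\mathbb{R}^n)$, and then to use $p$-quasicontinuity of $u$, together with a capacity cutoff and a smooth spatial cutoff, to produce a family of functions in $W^{1,p}_0(\Omega)$ that are compactly supported in $U$ and converge to $1$ $p$-q.e.\ on $U$; monotonicity is finally enforced by taking a running maximum.

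By the characterisation recalled just above the statement, I pick $u\in W^{1,p}(\mathbb{R}^n)$ with $U=\{u>0\}$ $p$-q.e., and, replacing $u$ by $u^+$, I may assume $u\ge 0$. Using $p$-quasicontinuity I select an increasing sequence of closed sets $F_k\subset\mathbb{R}^n$ with $C_{1,p}(\mathbb{R}^n\setminus F_k)\to 0$ and $u|_{F_k}$ continuous; using the definition of $p$-capacity I pick $\zeta_k\in W^{1,p}(\mathbb{R}^n)$ with $0\le\zeta_k\le 1$, $\zeta_k=1$ $p$-q.e.\ on $\mathbb{R}^n\setminus F_k$, and $\|\zeta_k\|_{W^{1,p}}\le 2^{-k}$, so that, up to a subsequence, $\zeta_k\to 0$ $p$-q.e. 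I also fix smooth cutoffs $\eta_k\in C_c^\infty(\Omega)$ with $0\le\eta_k\le 1$, non-decreasing in $k$, and $\eta_k\to 1$ pointwise on $\Omega$.

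Setting
\[
Q'_k:=\{u\ge 1/k\}\cap F_k\cap\mathrm{supp}(\eta_k),\qquad
\chi'_k:=\bigl((ku-1)^+\wedge 1\bigr)(1-\zeta_k)^+\eta_k,
\]
continuity of $u|_{F_k}$ makes $\{u\ge 1/k\}\cap F_k$ closed in $\mathbb{R}^n$, so that $Q'_k$ is compact and contained in $\{u>0\}=U$ ($p$-q.e.). The function $\chi'_k$ belongs to $W^{1,p}_0(\Omega)$ (because $\eta_k$ has compact support in $\Omega$), takes values in $[0,1]$, vanishes whenever $u\le 1/k$, $\zeta_k=1$, or $\eta_k=0$, and hence vanishes outside $Q'_k$; on $U$ each of the three factors converges to $1$ $p$-q.e., so $\chi'_k\to 1$ $p$-q.e.\ on $U$. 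Since the factor $(1-\zeta_k)^+$ generally destroys monotonicity in $k$, I finally set
\[
\chi_k:=\max_{1\le j\le k}\chi'_j,\qquad Q_k:=\bigcup_{1\le j\le k}Q'_j.
\]
Each $Q_k$ is a finite union of compact subsets of $U$, hence compact and still contained in $U$; the maximum of finitely many $W^{1,p}_0(\Omega)$ functions is again in $W^{1,p}_0(\Omega)$ with values in $[0,1]$; $\chi_k=0$ on $\Omega\setminus Q_k$ because each $\chi'_j$ with $j\le k$ vanishes outside $Q'_j\subset Q_k$; monotonicity is automatic, and $\chi_k\ge\chi'_k\to 1$ $p$-q.e.\ on $U$.

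The main obstacle is the clash between the quasi-topology, in which $\{u\ge 1/k\}$ is nicely closed and contained in $U$, and the genuine topology, in which the lemma demands an actually compact subset of $U$. The capacity cutoff $(1-\zeta_k)^+$ and the spatial cutoff $\eta_k$ address this by restricting attention to the closed set $F_k\cap\mathrm{supp}(\eta_k)$ on which $u$ is actually continuous; the modest price of losing monotonicity is paid by taking the running maximum at the end.
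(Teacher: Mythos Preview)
Your proof is correct and follows essentially the same approach as the paper's: realise $U=\{u>0\}$, truncate to get a function vanishing outside $\{u\ge 1/k\}$, use a capacity cutoff to pass from $p$-quasi closed to genuinely closed, and take a running maximum for monotonicity. The only cosmetic differences are that the paper uses $\wedge$ where you use products, and invokes the $p$-quasi closedness of $\{u\ge 1/k\}$ directly (finding an open $V_k$ of small capacity with $\{u\ge1/k\}\setminus V_k$ closed) rather than the $p$-quasicontinuity of $u$; your additional smooth cutoff $\eta_k$ makes the membership in $W^{1,p}_0(\Omega)$ and the compactness of $Q'_k$ slightly more explicit than in the paper.
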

\begin{proof}
We take $u\in W^{1,p}(\mathbb R^n)$ with $U=\{u>0\}$.
For every $k>0$ we define $u_k\in W^{1,p}(\mathbb R^n)$ setting $u_k = \big((u-\tfrac{1}{k})\vee 0)\cdot k\big)\wedge1$.
Clearly $0\le u_k\le1$ and out of the $p$-quasi closed set $U_k:=\{u\ge\tfrac{1}{k}\}$ we have that $u_k=0$. For every $k$, let $v_k\in W^{1,p}_0(\Omega)$, with $0\le v_k\le1$, be such that
\[
	\int_\Omega |\nabla v_k|^p\,dx = C_{1,p}(V_k)\,,
\]
where $V_k$ is an open set,
with $C_{1,p}(V_k)<\frac{1}{k}$, such that $U_k\setminus V_k$ is
closed. It follows that the function $w_k:=u_k\wedge(1-v_k)$ belongs to $W^{1,p}(\Omega)$ and
vanishes out of the compact set $Q_k := U_k\setminus V_k$. Moreover, $w_k$ converges to $1$
$p$-q.e.\ in $U$, since so does $u_k$ by construction. Therefore,
the sequence defined by recursion setting $\chi_1:=w_1$, and $\chi_k:=w_k\vee \chi_{k-1}$ for all $k>1$,
is a monotonically non-decreasing sequence that satisfies the desired properties.
\end{proof}

We introduce a $p$--quasicontinuous version of the traces of a piecewise $W^{1,p}$ function on both sides
of a manifold. We begin with the case of a graph of a $C^1$ function.
Given $x_0\in \mathbb{R}^n$,
$\nu_0\in \mathbb{R}^n$ with $|\nu_0|=1$, and $r_0>0$, we consider the cylinder defined by 
\begin{subequations}\label{notazioni}
\begin{equation}
\label{cilynder}
\Omega_0:=\left\{ x\in\mathbb{R}^n\colon |(x-x_0)\cdot\nu_0|<r_0\,, \ |x-x_0 - ((x-x_0)\cdot\nu_0)\nu_0|<r_0\right\}\,.
\end{equation}
We fix a function $\phi$ of class $C^1$  defined on the $(n-1)$-dimensional disk $\Pi_0= \{x\in \Omega_0\colon (x-x_0)\cdot \nu_0=0\}$ with values in $\left(-\tfrac{r_0}{4},\tfrac{r_0}{4}\right)$
and vanishing at $x_0$. Its graph is defined by
\begin{equation}
\label{Ngraph}
	\Sigma_0:=\left\{\bar x+ \phi(\bar x)\nu_0\colon \bar x\in \Pi_0\right\}\,.
\end{equation}
We now define the open sets $\Omega_0^+$ and $\Omega_0^-$ by
\begin{equation}
\label{2Omega}
\Omega_0^\pm = \left\{\bar x+r\nu_0\colon \bar x\in \Pi_0\,, \ r\in \left(-r_0,r_0\right) \,, \ \pm (r- \phi(\bar x))>0\right\}\,,
\end{equation}
\end{subequations}
so that $\Omega_0=\Omega_0^+\cup \Sigma_0\cup \Omega_0^-$.
\begin{lm}\label{lm:2.1}
Let $\Omega_0$, $\Sigma_0$, and $\Omega_0^\pm$ be defined by \eqref{notazioni}, and
let $\nu$ be the continuous unit normal
to $\Sigma_0$ such that $\nu(x_0)=\nu_0$.
Then for every $u \in W^{1,p}(\Omega_0\setminus \Sigma_0)$ there exist two $p$-quasicontinuous functions $u^+ $ and $u^-$ defined
on $\Sigma_0$ such that
\begin{equation}\label{intmedbd}
	\lim_{\rho\to0^\pm}\intmed_{B_{\rho,\nu}^\pm(x)}|u(y)-u^\pm(x)|\,dy=0\,,
		\qquad \text{for $p$-q.e.\ $x\in \Sigma_0$,}
\end{equation}
where
\(
	B_{\rho,\nu}^\pm(x) := \left\{y\in B_\rho(x)\,\,\colon \pm(y-x)\cdot\nu(x)>0 \right\}
	\).
\end{lm}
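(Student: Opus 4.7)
The plan is to reduce the statement to formula~\eqref{rapprqcont} applied separately on each side of $\Sigma_0$, and then to absorb the geometric discrepancy between the true half-ball $B_{\rho,\nu(x)}^{\pm}(x)$ and the one-sided neighbourhood $B_\rho(x)\cap\Omega_0^{\pm}$ using the $C^1$ regularity of the graph $\Sigma_0$.

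First, I would split $u$ into the restrictions $u|_{\Omega_0^\pm}$. Since $\Sigma_0$ is the graph of a $C^1$ function, each $\Omega_0^\pm$ is a bounded Lipschitz open set, so by the extension theorem there exist $v^\pm\in W^{1,p}(\Omega_0)$ agreeing a.e.\ on $\Omega_0^\pm$ with $u$. Letting $\tilde v^\pm$ denote $p$-quasicontinuous representatives on $\Omega_0$ (and thence on $\overline{\Omega_0^\pm}$, as recalled in the excerpt), I define $u^\pm:=\tilde v^\pm|_{\Sigma_0}$: these are $p$-quasicontinuous because restrictions of $p$-quasicontinuous functions to subsets remain so. Applying~\eqref{rapprqcont} to $v^\pm$ on $\Omega_0^\pm$ yields, for $p$-q.e.\ $x\in\Sigma_0\subset\overline{\Omega_0^\pm}$,
\[
\lim_{\rho\to0^+}\frac{1}{\rho^n}\int_{\Omega_0^\pm\cap B_\rho(x)}|u(y)-u^\pm(x)|\,dy=0.
\]

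Second, I would establish the purely geometric fact that, for every $x\in\Sigma_0$,
\[
\bigl|B_\rho(x)\cap\Omega_0^\pm\,\triangle\, B_{\rho,\nu(x)}^{\pm}(x)\bigr|=o(\rho^n)\qquad\text{as }\rho\to0^+.
\]
This follows from a first-order Taylor expansion of $\phi\in C^1$: near $x$, the graph $\Sigma_0$ lies in a slab of thickness $o(\rho)$ about its tangent hyperplane $\{y:(y-x)\cdot\nu(x)=0\}$, and the intersection of such a slab with $B_\rho(x)$ has volume $o(\rho)\cdot O(\rho^{n-1})=o(\rho^n)$.

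Finally, I would combine the two ingredients via the decomposition
\[
\int_{B_{\rho,\nu(x)}^{\pm}(x)}|u(y)-u^\pm(x)|\,dy=\int_{B_{\rho,\nu(x)}^{\pm}(x)\cap\Omega_0^\pm}|u-u^\pm(x)|\,dy+\int_{B_{\rho,\nu(x)}^{\pm}(x)\cap\Omega_0^\mp}|u-u^\pm(x)|\,dy,
\]
(the intersection with $\Sigma_0$ being negligible). The first summand is dominated by the integral handled in step one. For the second, H\"older's inequality gives the bound
\[
\bigl|B_{\rho,\nu(x)}^{\pm}(x)\cap\Omega_0^\mp\bigr|^{1/p'}\bigl(\|\tilde v^\mp\|_{L^p(B_\rho(x))}+|u^\pm(x)|\,|B_\rho(x)|^{1/p}\bigr),
\]
where the measure factor is $o(\rho^{n/p'})$ by step two, and the $L^p$ norm of $\tilde v^\mp$ over $B_\rho(x)$ is $O(\rho^{n/p})$ for $p$-q.e.\ $x\in\Sigma_0$ (the latter being the classical $L^p$ refinement of~\eqref{defmed2.1} for quasicontinuous representatives of Sobolev functions, see~\cite{Z}). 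The product is therefore $o(\rho^n)$, and~\eqref{intmedbd} follows.

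The main obstacle I expect is precisely the last H\"older-type estimate on the \emph{wrong} side of $\Sigma_0$: controlling $u$ on a set of measure $o(\rho^n)$ requires quasicontinuity information in the $L^p$ sense, not merely the $L^1$ sense of~\eqref{defmed2.1}. Once this is in hand, the remainder of the argument is a routine combination of an extension, restriction to a quasicontinuous trace, and a first-order expansion of the $C^1$ graph.
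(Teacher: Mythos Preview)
Your proposal is correct and follows the same strategy as the paper: extend $u|_{\Omega_0^\pm}$ to $v^\pm\in W^{1,p}(\Omega_0)$, take quasicontinuous representatives on $\Sigma_0$, and then pass from the one-sided neighbourhoods $\Omega_0^\pm\cap B_\rho(x)$ to the half-balls $B_{\rho,\nu}^\pm(x)$ using the $C^1$ regularity of the graph. The paper dispatches this last passage with a single citation to~\cite{EG}.

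Your unpacking of that step via H\"older's inequality and $L^p$-quasicontinuity is valid but heavier than necessary, and the ``main obstacle'' you flag is not real. On the wrong-side region $B_{\rho,\nu}^\pm(x)\cap\Omega_0^\mp$ you already have the \emph{other} extension $v^\mp$ at hand, so you can simply write
\[
|u(y)-u^\pm(x)|\le |v^\mp(y)-\tilde v^\mp(x)|+|\tilde v^\mp(x)-u^\pm(x)|.
\]
The integral of the first term over the whole ball $B_\rho(x)$ is $o(\rho^n)$ by the plain $L^1$ formula~\eqref{defmed2.1} applied to $v^\mp$, and the second term is a finite constant (for $p$-q.e.\ $x$) multiplied by $|B_{\rho,\nu}^\pm(x)\cap\Omega_0^\mp|=o(\rho^n)$ from your geometric step. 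No $L^p$ refinement is needed.
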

\begin{proof}
Let $u\in W^{1,p}(\Omega_0\meno \Sigma_0)$ be fixed. The extension theorems for Sobolev functions
imply that that there exist $v,w\in W^{1,p}(\Omega_0)$ such that
$v=u$\ a.e.\ in $\Omega_0^+$ and $w=u$\ a.e.\ in $\Omega_0^-$.
Let us set $u^+(x):=\tilde v(x)$ and $u^-(x):=\tilde w(x)$ for all $x\in \Sigma_0$.
Then for $p$-q.e.\ $x\in \Sigma_0$ we have
\[
\int_{\Omega_0^+\cap B_{\rho}(x)} \!|u(y)-u^+(x)|\,dy
=\int_{\Omega_0^+\cap B_{\rho}(x)} \!|v(y)- \tilde v(x)|\,dy
\le \int_{ B_{\rho}(x)} \!|v(y)- \tilde v(x)|\,dy 
\]
and a similar inequality holds when $u^+$ and $v$ are replaced by $u^-$ and $w$.
Therefore, by~\eqref{defmed2.1}
\[
\lim_{\rho\to0^+}\intmed_{\Omega_0^\pm\cap B_{\rho}(x)} |u(y)-u^\pm(x)|\,dy=0\,.
\]
It is easily seen that
$\Omega_0^\pm\cap B_{\rho}(x)$ can be replaced by $B_{\rho,\nu}^\pm(x)$ (see, e.g.,~\cite[Corollary 1, p. 203]{EG}) and by this we can conclude.
\end{proof}

\begin{rmq}
Note that under the assumptions of Lemma~\ref{lm:2.1} the functions $u^\pm$
coincide $\mathscr{H}^{n-1}$--a.e. on $\Omega_0\cap \Sigma_0$ with
the traces of $u_{\vert_{\Omega_0^+}}\in W^{1,p}(\Omega_0^+)$ and $u_{\vert_{\Omega_0^-}}\in W^{1,p}(\Omega_0^-)$ in the sense of Sobolev spaces.

Moreover, since $\Omega_0^+$ and $\Omega_0^-$ have Lipschitz boundary 
and $\Sigma_0\subset \partial \Omega_0^+\cap \partial \Omega_0^-$, 
by \eqref{rapprqcont} the trace $u^\pm$ concides $p$-q.e.\ on $\Sigma_0$ with the restriction to $\Sigma_0$ of
the $p$-quasicontinuous representative of $u_{\vert_{\Omega_0^+}}$ on $\overline{\Omega_0^+}$
and of $u_{\vert_{\Omega_0^-}}$ on $\overline{\Omega_0^-}$.
\end{rmq}

In the following lemma we introduce the absolute value of the jump of a function across a manifold.

\begin{lm}\label{lm:2.2}
Let $A$ be an open subset of $\Omega$. 
Then for every $u\in L^{1,p}(A\meno \Sigma)$ there exists a $p$-quasicontinuous function $[u]$ on $A\cap \Sigma$ such that
for $p$-q.e.\ $x\in A\cap \Sigma$
\begin{equation}
\label{jumpdef}
[u](x)=\bigg\vert \lim_{\rho\to0^+}\intmed_{B_{\rho,\nu}^+(x)}u(y)\,dy-\lim_{\rho\to0^+}\intmed_{B_{\rho,\nu}^-(x)}u(y)\,dy\bigg\vert\,,
\end{equation}
where $\nu$ is a unit normal to $\Sigma$ at $x$. 
\end{lm}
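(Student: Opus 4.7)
The plan is to reduce the statement to the local setting of Lemma~\ref{lm:2.1} via a countable covering of $A\cap\Sigma$ by cylinders of the form \eqref{cilynder}. The main subtlety is that Lemma~\ref{lm:2.1} assumes the manifold is locally the graph of a $C^1$ function defined on the \emph{whole} disk $\Pi_0$, which fails at points of $\partial\Sigma\cap A$. I would circumvent this by first extending $\Sigma$ slightly past its boundary.

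More precisely, since $\Sigma$ is compactly contained in the open set $\Omega$, a standard collar construction yields a $C^1$ manifold $\tilde\Sigma$ without boundary, still contained in $\Omega$, with $\Sigma\subset \tilde\Sigma\setminus\partial\tilde\Sigma$, and whose continuous unit normal restricts to the one on $\Sigma$. For every $x_0\in A\cap\Sigma$ I would pick a cylinder $\Omega_{x_0}\subset A$ as in \eqref{cilynder} such that $\tilde\Sigma\cap\Omega_{x_0}$ is the graph $\tilde\Sigma_{x_0}$ of a $C^1$ function as in \eqref{Ngraph}, splitting $\Omega_{x_0}$ into $\Omega_{x_0}^{\pm}$ as in \eqref{2Omega}. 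Since $\tilde\Sigma\supset\Sigma$, the sets $\Omega_{x_0}^{\pm}$ are bounded Lipschitz open sets contained in $A\setminus\Sigma$; the identity $L^{1,p}=W^{1,p}$ on such sets then yields $u\in W^{1,p}(\Omega_{x_0}\setminus\tilde\Sigma_{x_0})$. Lemma~\ref{lm:2.1} applied in this cylinder produces $p$-quasicontinuous functions $u^\pm$ on $\tilde\Sigma_{x_0}$ realising the limits in \eqref{intmedbd}, and I would set $[u]:=|u^+-u^-|$ locally on $\tilde\Sigma_{x_0}$.

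To globalise, I would extract a countable subcover $(\Omega_k)$ of $A\cap\Sigma$. The local definitions agree $p$-q.e.\ on overlaps, since by \eqref{intmedbd} they are both given by the intrinsic right-hand side of \eqref{jumpdef}, which depends only on $u$ and on the normal $\nu$ to $\Sigma$ at $x$, not on the local parametrisation. Global $p$-quasicontinuity then follows by a standard summing-of-capacities argument: given $\varepsilon>0$, in each cylinder I pick an exceptional set $E_k$ of $p$-capacity less than $\varepsilon/2^k$ outside which the local jump is continuous, and take $E=\bigcup_k E_k$; continuity is a local property, so the restriction of $[u]$ to $(A\cap\Sigma)\setminus E$ is continuous.

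The main obstacle is precisely the handling of points on $\partial\Sigma\cap A$; once the collar extension $\tilde\Sigma$ is available, the rest is a routine covering-and-patching exercise. A secondary point is that the definition of $[u]$ must not depend spuriously on the choice of collar. This is ensured by the intrinsic formula \eqref{jumpdef} and by the following observation: on the added piece $\tilde\Sigma\setminus\Sigma$, the function $u$ is $W^{1,p}$ across a smooth hypersurface that cuts through a neighbourhood on which $u$ does not genuinely jump, so the two traces agree there; by $p$-quasicontinuity of $u^\pm$ this equality propagates to $p$-q.e.\ $x\in\partial\Sigma\cap A$, forcing $[u](x)=0$ independently of $\tilde\Sigma$.
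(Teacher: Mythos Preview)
Your approach mirrors the paper's: cover $A\cap\Sigma$ by countably many cylinders, apply Lemma~\ref{lm:2.1} in each to obtain local traces $u_i^\pm$, and patch the local jumps $|u_i^+-u_i^-|$ into a global $p$-quasicontinuous $[u]$ via the intrinsic formula~\eqref{jumpdef}. Your collar extension $\tilde\Sigma$ to accommodate points of $\partial\Sigma\cap A$ is a careful addition that the paper leaves implicit (it simply asserts that $\Sigma\cap\Omega_0$ is a full graph over $\Pi_0$ around every $x_0\in A\cap\Sigma$), and your closing remark that $[u]=0$ $p$-q.e.\ on $\partial\Sigma$---hence the construction is independent of the collar---is a correct and useful bonus.
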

\begin{proof}
For every point $x_0\in A\cap \Sigma$ there exists $r_0>0$ such that 
the intersection of $\Sigma$ with the cylinder $\Omega_0$ defined in~\eqref{cilynder} is
the graph of a $C^1$ function as in \eqref{Ngraph}.
Therefore, by Lindel\"of  Theorem $A\cap \Sigma$ can be covered by a sequence $(\Omega_i)$ of such cylinders, each well contained in $A$ so that $u\in W^{1,p}(\Omega_i\setminus \Sigma)$.

By Lemma~\ref{lm:2.1}, for every $i$ there exist two $p$-quasicontinuous functions $u^+_i$ and $u^-_i$
on $\Omega_i\cap \Sigma$ such that~\eqref{intmedbd} holds. Note that for $p$-q.e.\ $x\in  
\Omega_i\cap \Omega_j\cap \Sigma$
we have either
\(
u^\pm_i(x) = u^\pm_{j}(x)
\)
or
\(
u^\pm_i(x) = u^\mp_{j}(x)
\),
which implies that
$|u^+_i(x)-u^-_i(x)|= |u^+_j(x)-u^-_j(x)|$. 
This allows us to define
\[
	[u](x) := |u^+_{i}(x)-u^-_{i}(x)|\,,\qquad \text{for $p$-q.e.\ $x\in \Omega_i\cap\Sigma$.}
\]
It is then immediate to see that $[u]$ is $p$-quasicontinuous and that \eqref{jumpdef} holds.
\end{proof}
\begin{rmq}\label{convpqe} 
Since the truncation $u\mapsto u^t=u\wedge t \vee -t$ is Lipschitz continuous, it commutes with the choice of a $p$-quasi continuous representative. Hence by Lemma~\ref{lm:2.2} we can infer that $[u^t]$ converges to $[u]$, as $t\to \infty$,  $p$-q.e. on $A\cap \Sigma$.
\end{rmq}

\subsection{Capacitary measures}\label{subsec:2.3}
In this subsection we introduce the class of measures which are involved in the integral representation of the limit problem \eqref{eq:1.5}.
In the sequel, for every open set $A\subset\mathbb{R}^n$
the symbol $\mathscr{A}(A)$ denotes the collection of all open
subsets of $A$, and for every Borel set $B\subset \mathbb{R}^n$
by $\mathscr{B}(B)$ we denote the collection 
of all Borel subsets of $B$. 

\begin{dfn}
Given $A\in \mathscr{A}(\Omega)$, the symbol $\mathcal{M}_p(A)$ 
denotes the class of
measures $\mu\colon \mathscr{B}(A)\to[0,\infty]$
such that $\mu(B)=0$ for every $B\in\mathscr{B}(A)$ with $C_{1,p}(B)=0$.
In addition, $\mathcal{M}_p(A;M)$ stands for the subclass of measures $\mu\in \mathcal{M}_p(A)$ with $\mu(A\setminus M)=0$.
\end{dfn}

Let $A\in \mathscr A(\Omega)$. Since $\Omega$ is bounded, a Radon measure $\mu$ on $\Omega$, considered as a distribution on $A$, belongs to the dual space
$W^{-1,p'}(A)$ of $W^{1,p}_0(A)$ if and only if there exists
$\Psi\in L^{p'}(A;\mathbb R^n)$ with $\mu = \mathop{\rm div} \Psi$ in the sense of distributions, {\em i.e.},
\begin{equation*}
	\int_A \varphi\,d\mu =- \int_A \nabla \varphi\cdot \Psi\,dx\,,
\end{equation*}
for all $\varphi\in C^{\infty}_c(A)$. We will denote by $\langle\,\cdot\,,\cdot\,\rangle$ the duality pairing between
$W^{-1,p'}(A)$ and $W^{1,p}_0(A)$ (see~\cite{HKM,Z}).

By this integral representation and H\"older inequality, all non-negative Radon measures on $A$ of class $W^{-1,p'}(A)$ belong to $\mathcal{M}_p(A)$.
In particular, 
a Radon measure $\mu$ on $A$ of class $W^{-1,p'}(A)$ that is supported on $M$
belongs to
$\mathcal{M}_p(A;M)$. 
Another significant istance within this class is the Hausdorff measure $\mathscr H^{n-1}\mres(A\cap M)$;
more in general, the Radon measure $\mathscr H^s\mres E$ belongs to $\mathcal{M}_p(A)$ whenever
$E$ is a subset of $A$ such that $\mathscr H^{s}(E)<\infty$ with $s>n-p$.
Nevertheless, measures in $\mathcal{M}_p(A;M)$
are required neither to be inner regular not to be locally finite. For example
\begin{equation}
\label{misura:infinita}
	\infty_E(B) := \begin{cases}
	0\,, & \qquad \text{if $C_{1,p}(B\cap E)=0$\,,}\\ \infty\,, & \qquad \text{otherwise,}
	\end{cases}
	\qquad \text{for all $B\in\mathscr B(A)$,}
\end{equation}
defines a measure of class $\mathcal{M}_p(A)$ for all $E\subset \Omega$, and clearly $\infty_E\in\mathcal{M}_p(
 A;M)$ if $E\subset M$.
\begin{dfn}
\label{dfn:equiv}
We say that two measures $\mu_1,\mu_2\in\mathcal{M}_p(\Omega; M)$ are equivalent if 
\begin{equation}
\label{equivmis}
\int_{ M} [u]^p\,d\mu_1=\int_{ M}[u]^p\,d\mu_2
\end{equation}
for all $u\in L^{1,p}(\Omega\setminus M)$.
\end{dfn}

It is easy to see that Definition \ref{dfn:equiv} implies that
\begin{equation}\label{equivmisA}
	\int_{A\cap M}[u]^p\,d\mu_1=\int_{A\cap M}[u]^p\,d\mu_2\,,
\end{equation}
for every $A\in\mathscr A(\Omega)$ and for every $u\in L^{1,p}(A\setminus M)$. In fact, for every 
$\varphi\in C^\infty_0(A)$, $\varphi u$ can be considered as an element of $L^{1,p}(\Omega\setminus  M)$
for which \eqref{equivmis} holds. To obtain \eqref{equivmisA} it is enough to approximate $1_A$ with an increasing sequence
of functions in $C^\infty_0(A)$.

In particular, two equivalent measures must agree on all open sets.
We point out that this necessary condition does not imply, in general, that they agree on all Borel sets (see Example~\ref{exa2.1}),
unless at least one of them is a Radon measure; on the other hand, the coincidence on open sets does not imply the equivalence according to Definition~\ref{equivmis}
(see Example~\ref{exa2.2}).
\begin{example}\label{exa2.1} Let $\mu_1=\infty_E$ be the measure defined in \eqref{misura:infinita} where $E=A\cap  M$,
with $A\in\mathscr A(\Omega)$ fixed, and we define
\[
\mu_2(B) = \begin{cases}
0\,, & \qquad \text{if $\mathscr H^{n-1}(B\cap E)=0$,} \\ \infty \,, & \qquad \text{otherwise.}
\end{cases}
\qquad \text{for all $B\in\mathscr B(A)$,}
\]
Then $\mu_1$ and $\mu_2$ are different but equivalent, because for a $p$-quasi continuous function $v$
we have that $v=0$ holds $\mathscr H^{n-1}$-a.e.\ on $A\cap  M$ if and only if it holds
$p$-q.e.\ on $A\cap  M$. 
\end{example}

\begin{example}\label{exa2.2}
In this example, $\mu_1=\infty_{E_1}$ and $\mu_2=\infty_{E_2}$ for suitable sets $E_2\subset E_1\subset  M$.
We first construct the set $E_1$.
Let $x_0\in M\setminus\partial M$ and $r_0>0$ be so small that $\Sigma_0=M\cap \Omega_0$
admits a representation of the form \eqref{Ngraph} where $\Omega_0$ is the cylinder defined by \eqref{cilynder}. Let also $\Omega_0^\pm$ be as in \eqref{2Omega}. 
For every $E\subset\Sigma_0$,
we set
\begin{equation}
\label{kappa1}
\kappa(E) := \inf_{u\in \mathcal{U}(E)}  \int_{\Omega_0^+}|\nabla u|^p\,dx
\end{equation}
where $\mathcal{U}(E)$ is the set of all functions $u\in W^{1,p}(\Omega_0^+)$ such that
$u=0$  $p$-q.e. on $E$ and $u=1$ $p$-q.e. on $\partial\Omega_0^+\setminus M$.
We fix $r_1\in(0,r_0)$, we consider the cylinder $\Omega_1$ defined as in
\eqref{cilynder} with $r_0$ replaced by $r_1$, and we set $E_1:=M \cap \Omega_1$. 

To construct $E_2$, we fix a sequence $(x_i)$ of points dense in $E_1$ and 
\[
E_2 := \bigcup_{i\in\mathbb N} \big(B_{\rho_i}(x_i)\cap  M\,\big)
\]
for a suitable choice of the radii $\rho_i>0$. The first condition on $\rho_i$ is that $B_{\rho_i}(x_i)\cap M\subset E_1 $.
The second one is the inequality $\kappa(E_1)>\kappa(E_2)$, which can be obtained using the countable subadditivity of
$\kappa(\cdot)$ and the fact that
\[
	\lim_{\rho\to0^+} \kappa(B_{\rho}(x))=0\qquad \text{for every $x$.}
\]
These two properties of $\kappa(\cdot)$ can be proved with the same arguments used for $C_{1,p}$.

Since $E_1$ and $E_2$ are relatively open in $ M$ and $E_2$ is dense in $E_1$, for all $A\in\mathscr A(\Omega)$ we have
\[
C_{1,p}(E_1\cap A)>0 \quad \Longleftrightarrow \quad C_{1,p}(E_2\cap A)>0\,.
\]
This implies that $\mu_1(A)=\mu_2(A)$ for every $A\in\mathscr A(\Omega)$.

To prove that $\mu_1$ and $\mu_2$ are not equivalent, we observe that the inequality $\kappa(E_1)>\kappa(E_2)$ implies the existence of a function $u\in W^{1,p}(\Omega_0^+)$, with $u=0$ $p$-q.e.\ on $E_2$ and $u=1$
$p$-q.e.\ on $\partial\Omega_0^+\setminus M$, such that
\begin{equation}
\label{exa:contr}
	\int_{\Omega_0^+} |\nabla u|^p\,dx< \kappa(E_1)\,.
\end{equation}
Let $v\in W^{1,p}(\Omega\setminus  M)$ be such that $v=u$ in $\Omega_0^+$ and $v=0$ in $\Omega_0^-$.
We claim that
\begin{equation}
\label{exa:eq}
\int_{\Omega}[v]^p\,d\mu_1=\infty\quad \text{and}\quad \int_{\Omega}[v]^p\,d\mu_2=0\,.
\end{equation}
To prove the first equality, by contradiction we assume that the first integral is finite. By the definition of $\mu_1$
this implies that $[v]=0$, hence $u=0$, $p$-q.e.\ in $E_1$. Thus $u$ is a competitor for the minimum problem \eqref{kappa1}
which defines $\kappa(E_1)$, contradicting \eqref{exa:contr}. The second equality in \eqref{exa:eq} 
follows from the fact that $[v]=u=0$ $p$-q.e.\ on $E_2$.
\end{example}

\begin{lm}\label{lm:criterionequiv}
Two measures $\mu_1$, $\mu_2\in \mathcal{M}_p(\Omega; M)$ are equivalent if and only if
they agree on $p$-quasi open sets.
\end{lm}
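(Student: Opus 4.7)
The plan is to prove each implication by translating integrals $\int_M[u]^p\,d\mu_i$ into $\mu_i$-measures of superlevel sets of $[u]$, localising along $M$ through the cylinders introduced in \eqref{cilynder} and a partition of unity. Fix once and for all a finite cover $\Omega_1,\dots,\Omega_N$ of $M$ by such cylinders, with associated decompositions $\Omega_j=\Omega_j^+\cup\Sigma_j\cup\Omega_j^-$ as in \eqref{2Omega}.

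For the necessary implication, let $U\subset\Omega$ be $p$-quasi open. By Lemma~\ref{lm:qa0} there is an increasing sequence $(\chi_k)\subset W^{1,p}_0(\Omega)$ with $0\le\chi_k\le1$, $\chi_k=0$ outside $U$, and $\chi_k\to1$ $p$-q.e.\ on $U$. Choose $\eta_j\in C^\infty_c(\Omega_j)$, $\eta_j\ge0$, such that $\sum_{j=1}^N\eta_j^p\equiv1$ on a neighbourhood of $M$; this normalisation is obtained by rescaling a standard partition of unity subordinate to $(\Omega_j)$. For each $k$ and $j$ set
\[
u_{k,j}:=\begin{cases}\eta_j\chi_k&\text{on }\Omega_j^+\,,\\ 0&\text{on }(\Omega\setminus\Omega_j)\cup\Omega_j^-\,.\end{cases}
\]
Since $\eta_j$ is compactly supported in $\Omega_j$, the function $u_{k,j}$ belongs to $L^{1,p}(\Omega\setminus M)$, and Lemmas~\ref{lm:2.1} and \ref{lm:2.2} give $[u_{k,j}]=\eta_j\chi_k$ $p$-q.e.\ on $M$. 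Applying the equivalence of $\mu_1$ and $\mu_2$ to each $u_{k,j}$, summing in $j$, and using $\sum_j\eta_j^p\equiv1$ on $M$, we obtain $\int_M\chi_k^p\,d\mu_1=\int_M\chi_k^p\,d\mu_2$. As both measures vanish on sets of $p$-capacity zero, the monotone convergence $\chi_k^p\nearrow1_U$ $\mu_i$-a.e.\ then yields $\mu_1(U)=\mu_2(U)$.

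For the sufficient implication, I would show that for every $u\in L^{1,p}(\Omega\setminus M)$ there exists $\tilde u\in W^{1,p}(\Omega)$ whose $p$-quasicontinuous representative agrees with $[u]$ $p$-q.e.\ on $M$, so as to transport the superlevel sets of $[u]$ to $p$-quasi open subsets of $\Omega$. In each cylinder $\Omega_j$, Lemma~\ref{lm:2.1} provides $v_j,w_j\in W^{1,p}(\Omega_j)$ extending $u_{|\Omega_j^\pm}$, and by Lemma~\ref{lm:2.2} the $p$-quasicontinuous representative of $|v_j-w_j|$ agrees with $[u]$ on $M\cap\Omega_j$. Take now a genuine partition of unity $(\eta_j)$ with $\eta_j\in C^\infty_c(\Omega_j)$ and $\sum_j\eta_j\equiv1$ on a neighbourhood of $M$, and set $\tilde u:=\sum_{j=1}^N\eta_j|v_j-w_j|$; then $\tilde u\in W^{1,p}_0(\Omega)$ and its $p$-quasicontinuous representative equals $[u]$ $p$-q.e.\ on $M$. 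Consequently, for every $t>0$ the set $\{[u]>t\}$ coincides up to $p$-capacity zero with $\{\tilde u>t\}\cap M$, and $\{\tilde u>t\}$ is $p$-quasi open in $\Omega$ by the characterisation recalled above. Since $\mu_i$ is concentrated on $M$, the hypothesis gives $\mu_1(\{[u]>t\})=\mu_2(\{[u]>t\})$ for every $t>0$, and the layer-cake formula
\[
\int_M[u]^p\,d\mu_i=\int_0^\infty pt^{p-1}\mu_i(\{[u]>t\})\,dt
\]
delivers the required equality of integrals.

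I expect the main obstacle to be the construction of admissible test functions with prescribed jump in the necessary direction, because a single globally-defined $u$ with $[u]=\chi_k$ would in general require $M$ to be two-sided, an assumption not present in the setup. The partition-of-unity reduction sidesteps this by working in each cylinder, where the side $\Omega_j^+$ is unambiguously defined; the slightly unusual normalisation $\sum_j\eta_j^p\equiv1$, rather than $\sum_j\eta_j\equiv1$, is precisely what makes the summation over $j$ collapse to the integral $\int_M\chi_k^p\,d\mu_i$. The remaining steps (monotone convergence and layer-cake) are routine once $\mu_i\in\mathcal{M}_p(\Omega;M)$ is used to pass from $p$-q.e.\ to $\mu_i$-a.e.\ convergence.
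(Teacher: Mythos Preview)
Your overall strategy---realise $\chi_k$ as a jump for the forward implication, and use the layer-cake formula on superlevel sets for the backward one---is exactly the adaptation of \cite[Theorem~2.6]{DM2} that the paper has in mind; the paper itself gives no further details. Your sufficient direction is correct as written.

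There is, however, a genuine gap in the necessary direction. You claim that $u_{k,j}\in L^{1,p}(\Omega\setminus M)$ ``since $\eta_j$ is compactly supported in $\Omega_j$''. Compact support of $\eta_j$ only guarantees that $u_{k,j}$ glues smoothly across $\partial\Omega_j$; it does not control the jump across $\Sigma_j\setminus M$. Indeed, in the paper's notation $\Sigma_j=\Omega_j\cap\Sigma$ may strictly contain $\Omega_j\cap M$---this is unavoidable for cylinders centred near $\partial M$, since any such cylinder meets $\Sigma\setminus M$. At points of $\Sigma_j\setminus M\subset\Omega\setminus M$ your function jumps from $\eta_j\chi_k$ to $0$, so it is not locally $W^{1,p}$ there and hence not in $L^{1,p}(\Omega\setminus M)$. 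Your partition-of-unity device handles the orientability issue you correctly flag, but not this boundary issue.

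A repair along your lines is to first insert a further cutoff $\theta_\varepsilon\in C^\infty_c(\Omega)$ with $\theta_\varepsilon=0$ on a neighbourhood of $\overline{\Sigma\setminus M}$ and $\theta_\varepsilon=1$ on $\{x\in M:\mathrm{dist}(x,\partial M)\ge\varepsilon\}$; with $\eta_j\theta_\varepsilon\chi_k$ in place of $\eta_j\chi_k$ the cylinders can be chosen so that $\Omega_j\cap\Sigma\subset M$, and your construction then yields $\int_M(\theta_\varepsilon\chi_k)^p\,d\mu_1=\int_M(\theta_\varepsilon\chi_k)^p\,d\mu_2$. Letting $\varepsilon\to0$ and then $k\to\infty$ gives $\mu_1(U)=\mu_2(U)$ for every $p$-quasi open $U$ with $\mu_i(U\cap\partial M)=0$. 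The residual contribution on $\partial M$ requires a separate argument (for instance, approximating from outside by $p$-quasi open sets that avoid $\partial M$ and using that your equality already holds for those), which you should spell out if you want a complete self-contained proof.
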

\begin{proof}
The criterion follows by repeating with obvious changes the arguments used in~\cite[Theorem 2.6]{DM2}.
\end{proof}

The following lemma introduces a distinguished element in each equivalence class of $\mathcal{M}_p(\Omega; M)$.

\begin{lm}
For every $\mu\in \mathcal{M}_p(\Omega; M)$, there exists a measure $\mu^\ast\in\mathcal{M}_p(\Omega; M)$, equivalent to $\mu$, with the property that
\begin{equation}
\label{maxcoset}
	\mu^\ast(B) = \inf\big\{ \mu^\ast(U)\colon \text{\em $U$ $p$-quasi open and $B\subset U\subset \Omega$}\big\}\,,
\end{equation}
for all $B\in\mathscr B(\Omega)$. Moreover, $\mu^\ast\ge\nu$ whenever $\nu\in\mathcal{M}_p(\Omega; M)$ is equivalent to $\mu$.
\end{lm}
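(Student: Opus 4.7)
The plan is to define the candidate
\[
\mu^{\ast}(B) := \inf\{\mu(U) : U \text{ is $p$-quasi open in } \Omega \text{ and } B \subset U\subset\Omega\}
\]
for every Borel set $B\subset\Omega$, and to verify in turn that (i) $\mu^{\ast}$ is a Borel measure, (ii) it belongs to $\mathcal{M}_p(\Omega;M)$, (iii) it coincides with $\mu$ on $p$-quasi open sets (so that \eqref{maxcoset} follows automatically), (iv) it is equivalent to $\mu$, and (v) $\nu\le\mu^{\ast}$ for every equivalent $\nu\in\mathcal{M}_p(\Omega;M)$. A preliminary remark I would record is that countable unions of $p$-quasi open sets are still $p$-quasi open: given open approximations $A_k$ of the $U_k$ with $C_{1,p}(U_k\triangle A_k)<\varepsilon 2^{-k}$, the set $\bigcup_k A_k$ is open and satisfies $C_{1,p}\big((\bigcup_k U_k)\triangle \bigcup_k A_k\big)<\varepsilon$.

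For (i), monotonicity is clear, and countable subadditivity follows by choosing $U_k\supset B_k$ with $\mu(U_k)<\mu^{\ast}(B_k)+\varepsilon 2^{-k}$ and covering $\bigcup_k B_k$ by the $p$-quasi open set $\bigcup_k U_k$. The main obstacle is $\sigma$-additivity on pairwise disjoint Borel sets; for this I would verify that every Borel set is Carath\'eodory-measurable for the outer measure $\mu^{\ast}$, leveraging the fact that on $p$-quasi open sets $\mu^{\ast}$ inherits $\sigma$-additivity from $\mu$, together with outer $p$-quasi open approximations that let one split any test set $E$ along a Borel set $B$ up to arbitrarily small error. For (ii), I note that $\mu^{\ast}(\Omega\setminus M)=\mu(\Omega\setminus M)=0$ because $\Omega\setminus M$ is itself $p$-quasi open; and $\mu^{\ast}(B)=0$ whenever $C_{1,p}(B)=0$ is obtained by taking a decreasing sequence of $p$-quasi open $W_k\supset B$ with $C_{1,p}(W_k)\to 0$, noting that $\bigcap_k W_k$ has zero capacity and hence zero $\mu$-measure, and deducing $\mu(W_k)\to 0$ by continuity from above (after a finite truncation, if needed).

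For (iii), taking $U$ itself in the infimum gives $\mu^{\ast}(U)\le\mu(U)$, while monotonicity of $\mu$ on $p$-quasi open $V\supseteq U$ gives $\mu^{\ast}(U)\ge\mu(U)$, so $\mu^{\ast}=\mu$ on $p$-quasi open sets and \eqref{maxcoset} is immediate. For (iv), Lemma~\ref{lm:criterionequiv} applies directly since $\mu$ and $\mu^{\ast}$ agree on $p$-quasi open sets; alternatively, by Lemma~\ref{lm:2.2} and Remark~\ref{convpqe} the function $[u]^p$ is $p$-quasicontinuous, so its superlevel sets are $p$-quasi open, and layer-cake yields
\[
\int_M [u]^p\,d\mu=\int_0^{\infty}\mu(\{[u]^p>t\})\,dt=\int_0^{\infty}\mu^{\ast}(\{[u]^p>t\})\,dt=\int_M [u]^p\,d\mu^{\ast}.
\]
Finally, for (v), any equivalent $\nu\in\mathcal{M}_p(\Omega;M)$ agrees with $\mu$ on $p$-quasi open sets by Lemma~\ref{lm:criterionequiv}; for $B\in\mathscr{B}(\Omega)$ and every $p$-quasi open $U\supset B$, monotonicity gives $\nu(B)\le\nu(U)=\mu(U)$, and taking the infimum over such $U$ yields $\nu(B)\le\mu^{\ast}(B)$. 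The step I expect to be most delicate is the $\sigma$-additivity in~(i), where the naive outer-regularization need not preserve additivity without a Carath\'eodory-type measurability argument.
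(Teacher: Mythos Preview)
Your approach is essentially the paper's: define $\mu^\ast(B):=\inf\{\mu(U):U\text{ $p$-quasi open, }B\subset U\subset\Omega\}$, check it lies in $\mathcal{M}_p(\Omega;M)$ and satisfies \eqref{maxcoset}, invoke Lemma~\ref{lm:criterionequiv} for equivalence, and derive maximality exactly as in your step~(v). The paper defers the measure-theoretic verification (your (i)--(iii)) to \cite[Theorem~3.9]{DM2}, whereas you sketch it directly, correctly flagging $\sigma$-additivity as the delicate point.

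One step does not work as written, however. In (ii), the continuity-from-above argument for $C_{1,p}(B)=0\Rightarrow\mu^\ast(B)=0$ fails because $\mu$ need not be finite on any of the $W_k$: take $\mu=\infty_M$ and $B=\{x_0\}$ with $x_0\in M$; every open neighbourhood of $x_0$ meets $M$ in a set of positive capacity, so $\mu(W_k)=\infty$ for all $k$ and no ``finite truncation'' rescues the limit. The fix is immediate: any set $B$ with $C_{1,p}(B)=0$ is itself $p$-quasi open (take $A=\emptyset$ in the definition of $p$-quasi openness), hence $\mu^\ast(B)\le\mu(B)=0$ directly.
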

\begin{proof}Arguing as in~\cite[Theorem 3.9]{DM2}, 
it can be seen that
\begin{equation*}
\mu^\ast(B) := \inf\big\{ \mu(U)\colon\text{$U$ $p$-quasi open and $B\subset U\subset \Omega$}\big\}\,,\qquad \text{for all $B\in\mathscr B(\Omega)$,}
\end{equation*}
defines a Borel measure of class $ \mathcal{M}_p(\Omega; M)$ satisfying \eqref{maxcoset}. By Lemma~\ref{lm:criterionequiv}
$\mu^\ast$ is equivalent to $\mu$ and for any other measure $\nu$ within the equivalence class
we have that
\[
\nu(B) \le \inf\big\{\nu(U)\colon\text{$U$ $p$-quasi open and $B\subset U\subset \Omega$}\big\}
=\mu^\ast(B)\,, 
\]
for all $B\in\mathscr B(\Omega)$.
\end{proof}

The previous lemma states that any capacitary measure admits a maximal representative in its equivalence class
that is outer regular with respect to $p$-quasi open sets.

In the sequel we will sometimes need to represent
$\mu$ by means of a measure which is absolutely continuous with respect to an element of a dual Sobolev space.

\begin{lm}\label{lm:equivmis}
Every measure $\mu\in \mathcal{M}_p(\Omega;M)$ 
is equivalent to a measure $\psi\sigma$ of the form
\[
	\psi \sigma(B) = \int_{B\cap M}\psi\,d\sigma\,,\qquad B\in\mathscr B(\Omega)\,,
\]
for a Borel function $\psi:M\to[0,\infty]$ and a non-negative measure $\sigma$ of class $W^{-1,p'}(\Omega)$.
\end{lm}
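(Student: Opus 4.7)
The strategy is to build $\sigma$ as a norm-convergent series in $W^{-1,p'}(\Omega)$ of finite non-negative Radon measures concentrated on $M$ that are pointwise dominated by the maximal representative $\mu^\ast$ of $\mu$ (provided by the previous lemma), and then to take $\psi$ to be the extended Radon--Nikodym density $d\mu^\ast/d\sigma$. Since $\mu\sim\mu^\ast$, by Lemma~\ref{lm:criterionequiv} it is enough to verify that $\mu^\ast(U)=\psi\sigma(U)$ for every $p$-quasi open $U\subset\Omega$.

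Let $\mathcal{N}$ denote the class of finite non-negative Radon measures $\nu$ on $\Omega$, concentrated on $M$, belonging to $W^{-1,p'}(\Omega)$, and with $\nu\le\mu^\ast$ pointwise on Borel sets. This class is non-trivial: the Sobolev trace theorem yields $\mathscr{H}^{n-1}\mres M\in W^{-1,p'}(\Omega)$ (using that $M$ is a compact $(n-1)$-dimensional $C^1$ manifold inside the bounded domain $\Omega$), and truncations of it, as well as scaled $p$-capacitary equilibrium measures of compact subsets of $M$ of positive $p$-capacity, provide many elements of $\mathcal{N}$. By a separability argument on the family of $p$-quasi open sets of $\Omega$, combined with the outer regularity of $\mu^\ast$, I would extract a countable cofinal subfamily $(\nu_k)\subset\mathcal{N}$ satisfying $\sup_k\nu_k(U)=\mu^\ast(U)$ for every $p$-quasi open $U\subset\Omega$. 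Then I would set
\[
\sigma := \sum_{k=1}^\infty \frac{2^{-k}}{1+\|\nu_k\|_{W^{-1,p'}(\Omega)}}\,\nu_k\,,
\]
a norm-convergent series defining a non-negative measure of class $W^{-1,p'}(\Omega)$ concentrated on $M$, and define $\psi\colon M\to[0,\infty]$ as $d\mu^\ast/d\sigma$ where $\mu^\ast$ is $\sigma$-finite, extended by $+\infty$ on the residual $\sigma$-measurable subset of $M$.

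The verification $\psi\sigma(U)=\mu^\ast(U)$ on each $p$-quasi open $U$ splits: if $\mu^\ast(U)<\infty$ the cofinality ensures $\mu^\ast\mres U\ll\sigma$ and the Radon--Nikodym identity applies directly; if $\mu^\ast(U)=+\infty$ the cofinality forces $\sup_k\nu_k(U)=\infty$, so $\sigma$ charges the $\{\psi=+\infty\}$ portion of $U$ and $\psi\sigma(U)=+\infty$ as well. The main obstacle is the cofinal construction of $(\nu_k)$: producing enough dominated $W^{-1,p'}(\Omega)$-measures requires capacity-theoretic tools (notably $p$-capacitary equilibrium measures, which belong to $W^{-1,p'}(\Omega)$ by construction) to handle $p$-quasi open sets on which $\mu^\ast$ is concentrated on subsets of $M$ of small $\mathscr{H}^{n-1}$-measure but positive $p$-capacity, in the spirit of Example~\ref{exa2.2}. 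Here the hypothesis $p\le n$ and the $(n-1)$-dimensional $C^1$ regularity of $M$ are used both to place $\mathscr{H}^{n-1}\mres M$ inside $W^{-1,p'}(\Omega)$ and to guarantee the availability of non-trivial capacitary measures on small subsets of $M$ of positive $p$-capacity.
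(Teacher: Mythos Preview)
Your approach differs substantially from the paper's, and the step you yourself flag as ``the main obstacle'' is a genuine gap that you do not close. The cofinality claim --- that countably many $\nu_k\in\mathcal{N}$ can be chosen with $\sup_k\nu_k(U)=\mu^\ast(U)$ for \emph{every} $p$-quasi open $U$ --- is essentially as hard as the lemma itself. The appeal to ``a separability argument on the family of $p$-quasi open sets'' is not justified: that family carries no topology in which a countable dense subfamily would let you recover the supremum identity on all of it, and while $p$-capacitary equilibrium measures do lie in $W^{-1,p'}(\Omega)$, you give no argument that they (together with pieces of $\mathscr{H}^{n-1}\mres M$) form a cofinal family in the required sense. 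There is also a secondary gap in the Radon--Nikodym step: to define $\psi=d\mu^\ast/d\sigma$ on the finite part you need $\mu^\ast\mres U\ll\sigma$ on \emph{Borel} sets, but cofinality on $p$-quasi open sets does not yield this, since the damping factors $2^{-k}/(1+\|\nu_k\|)$ prevent any control of $\sup_k\nu_k$ in terms of $\sigma$.

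The paper proceeds quite differently. It works locally in cylinders $\Omega_0$ and invokes the integral representation theorem \cite[Theorem~5.7]{DM} for local functionals, which delivers simultaneously a non-negative Radon measure $\sigma_0\in W^{-1,p'}$ and an integrand $h(x,s)$ satisfying
\(
\int_{A\cap\Sigma_0}(\tilde z\vee0)^p\,d\mu_0=\int_{A\cap\Sigma_0}h(x,\tilde z)\,d\sigma_0
\)
for all $z\in W^{1,p}(\Omega_0)$. The $p$-homogeneity of the left-hand side forces (via \cite[Lemma~2.3]{DMD}) $h(x,s)=s^p\psi_0(x)$ with $\psi_0(x)=h(x,1)$, which gives the local form of the lemma; a finite Borel partition of $M$ then patches the local $(\psi_i,\sigma_i)$ into a global pair. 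Thus the hard analytic content --- precisely the capacity-theoretic approximation you leave undone --- is outsourced to \cite[Theorem~5.7]{DM}.
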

\begin{proof}
Let $\mu\in \mathcal{M}_p(\Omega;M)$. 
Let $\Omega_0$, $\Sigma_0$, and $\Omega_0^\pm$ be as in \eqref{notazioni}, let $E_0$ be a Borel set of $\Sigma_0$ and
let
$\mu_0=\mu\mres E_0\in \mathcal M_p(\Omega_0,\Sigma_0)$. 
By~\cite[Theorem 5.7]{DM}, there exist a Borel function $h\colon \Omega_0\times\mathbb R\to[0,\infty]$, increasing and lower semicontinuous in the second variable for all $x\in\Omega_0$, and a non-negative Radon measure $\sigma_0$ of class $W^{-1,p'}(\mathbb R^n)$ such that
\begin{equation}\label{eq:lemma}
\int_{A\cap \Sigma_0}
(\tilde z\vee0)^p
d\mu_0=
\int_{A\cap \Sigma_0} h(x,\tilde z)\,d\sigma_0\,,
\end{equation}
for all $A\in\mathscr A(\Omega_0)$ and $z\in W^{1,p}(\Omega_0)$. The additional non-negative Borel measure $\nu$ appearing in~\cite[Theorem 5.7]{DM} is not present here due to the obvious $p$-homogeneity property of the integral in the left-hand side of \eqref{eq:lemma}. Clearly, by \eqref{eq:lemma}, for every $t>0$ we have
\[
\int_{A\cap \Sigma_0} h(x,t\tilde  z)\,d\sigma_0= t^p\int_{A\cap \Sigma_0} h(x,\tilde z)\,d\sigma_0\,.
\]
Then by~\cite[Lemma 2.3]{DMD}, setting $\psi_0(x):=h(x,1)$ for $x\in \Omega_0$, we deduce 
\[
	\int_{A\cap \Sigma_0} h(x,\tilde z)\,d\sigma_0=\int_{A\cap \Sigma_0}(\tilde z\vee0)^p \psi_0\,d\sigma_0\,.
\]
Plugging this into \eqref{eq:lemma} we obtain that
\begin{equation}
\label{equivloc}
	\int_{A\cap \Sigma_0} \tilde z^p\,d\mu_0=\int_{A\cap \Sigma_0} \tilde z^p\psi_0 \,d\sigma_0\,,
\end{equation}
for all $A\in\mathscr A(\Omega_0)$ and for all non-negative functions $z\in W^{1,p}(\Omega_0)$.

We now prove the equivalence of $\mu$ to some measure $\psi\sigma$ according to Definition~\ref{dfn:equiv}.
Let  $\{E_i\}$ be a finite family of pairwise disjoint Borel sets of $M$ such that
 $M = E_1\cup\ldots\cup E_m$ for some $m\in\mathbb N$, every $E_i$ is contained in a 
cylinder  $\Omega_i$ of the form~\eqref{cilynder} and $\Sigma_i=\Sigma\cap \Omega_i$ admits a graph representation as in \eqref{Ngraph}, for suitable radii $r_i>0$, centres $x_i\in M$, and axis $\nu_i\in \mathbb{R}^n$ with $|\nu_i|=1$.
Let $\psi_i:\Sigma_i\to[0,\infty]$ and $\sigma_i\in W^{-1,p'}(\Omega_i)$ 
satisfy \eqref{equivloc} with $\Omega_0$, $\Sigma_0$, $\mu_0$ replaced by 
$\Omega_i$, $\Sigma_i$, $\mu_i=\mu\mres E_i$. Then we fix a function $u\in L^{1,p}(A\setminus M)$. For all indices $i$, 
the extension theorems for Sobolev functions imply
that there exist $v_i,w_i\in W^{1,p}(\mathbb R^n)$
such that $v_i=u$ a.e.\ in $\Omega_i^+$ and $w_i=u$ a.e.\ in $\Omega_i^-$,
with $\Omega_i^\pm$ being defined as in \eqref{2Omega}. Let
$z_i = \tilde v_i-\tilde w_i$ so that $[u]=|\tilde z_i|$ for $p$-q.e. $x\in \Sigma_i$ and notice that $z_i=0$ on $\Sigma_i\setminus M$.
Hence by \eqref{equivloc} we have
\begin{equation*}
\int_{A\cap M} [u]^p\,d\mu=
\sum_{i=1}^m\int_{A\cap \Sigma_i}  |z_i|^p\,d\mu_i =
\sum_{i=1}^m\int_{A\cap \Sigma_i} |z_i|^p\,\psi_i\,d\sigma_i= \int_{A\cap M}[u]^p\psi\,d\sigma\,,
\end{equation*}
where in the last equality we have set
\begin{equation*}
\sigma = \sum_{i=1}^m\sigma_i\,,\qquad \psi = \sum_{i=1}^m 1_{E_i}\psi_i\,.
\end{equation*}
By linearity, it is clear that $\sigma$ is a non-negative Radon measure of class $W^{-1,p'}(\mathbb R^n)$ and that
$\psi$ is a non-negative Borel function on $M$. The lemma is then proved.
\end{proof}

\section{$\Gamma$-convergence}\label{sec:3}

We prove in this section a compactness result about the $\Gamma$-convergence of a sequence of functionals involving the compact sets $K_j$ and the function $f$. Then we discuss the convergence of the minimisers of this sequence when it is perturbed by a functional involving the function $g$.

\subsection{A compactness result}
For every $j\in\mathbb N$ let $\mathscr F_{K_j}\colon L^0(\Omega)\times\mathscr{A}(\Omega)\to[0,\infty]$ be the functional defined by
\begin{equation}
\label{eq:1.2}
\mathscr F_{K_j}(u,A) := \begin{cases}
			\displaystyle\int_{A\meno  K_j} f(\nabla u)\,dx\,, & \qquad\text{if $u_{\vert_{{A\meno  K_j}}}\in L^{1,p}(A\meno  K_j)$,}\\
			\infty\,,& \qquad \text{otherwise,}
\end{cases}
\end{equation}
with $f$ satisfying \eqref{A} and $(K_j)$ as in \eqref{eq:1.1sec3}.
Notice that for every $u\in L^0(\Omega)$ the set functions $\mathscr F_{K_j}(u,\cdot)$ are
increasing on $\mathscr{A}(\Omega)$ with respect to set inclusion. 
 
Then, let $\mathscr F',\mathscr F''\colon L^0(\Omega)\times \mathscr{A}(\Omega)\to[0,\infty]$ be the functionals defined by
\begin{subequations}\label{233}
\begin{align}
 \mathscr F'(u,A) & := \inf\left\{ \liminf_{j\to\infty} \mathscr F_{K_j}(u_j,A)\,\,\colon\,\, \text{$u_j\to u$ in $L^0(\Omega)$}\right\}\,,\label{233a}\\
 \mathscr F''(u,A) &:= \inf\left\{ \Limsup_{j\to\infty} \mathscr F_{K_j}(u_j,A)\,\,\colon\,\, \text{$u_j\to u$ in $L^0(\Omega)$}\right\}\,.\label{233b}
\end{align}
\end{subequations}
We observe that the infima in \eqref{233} are achieved, see~\cite[Proposition 8.1]{DMb}. Moreover, for every $u\in L^0(\Omega)$ the set functions $\mathscr F'(u,\cdot)$ and $\mathscr F''(u,\cdot)$ are
increasing on $\mathscr{A}(\Omega)$ with respect to set inclusion and for every $A\in \mathscr{A}(\Omega)$
the functionals $\mathscr F'(\cdot,A)$ and $\mathscr F''(\cdot,A)$ are lower semicontinuous on $L^0(\Omega)$.

\begin{rmq}
\label{rmq:trunc}
Let $u\in L^0(\Omega)$ and let $(u_j)$ converge to $u$ in $L^0(\Omega)$. For every $t>0$ let $u_j^t=u_j\wedge t\vee -t$ and $u^t=u\wedge t\vee -t$.
Since the truncation is Lipschitz continuous, $(u_j^t)$ converges to $u^t$ in $L^0(\Omega)$; moreover,
it is easily seen that
$\mathscr F_{K_j}(u_j^t,A)\le \mathscr F_{K_j}(u_j,A)$ for all $A\in \mathscr A(\Omega)$.
Since $u^t=u$ for all $t\ge\|u\|_\infty$, we deduce that for functions $u\in L^\infty(\Omega)$
it is not restrictive to assume the recovery sequences in \eqref{233} to be uniformly bounded by $\|u\|_\infty$ in $L^\infty(\Omega)$. 
Similarly, we also deduce that
\begin{equation}\label{truncation}
\mathscr F'(u^t,A)\le \mathscr F'(u,A)\quad  \text{and}\quad  \mathscr F''(u^t,A)\le \mathscr F''(u,A)\,,
\end{equation}
for every $A\in\mathscr A(\Omega)$. Combining \eqref{truncation} with the lower semicontinuity of $\mathscr F'$ and $\mathscr F''$
with respect to the convergence of $(u^t)$ to $u$ in $L^0(\Omega)$, we get
\begin{equation}\label{limits}
\lim_{t\to\infty}\mathscr F'(u^t,A)=\mathscr F'(u,A)\,,\quad \text{and}\quad \lim_{t\to\infty}\mathscr F''(u^t,A)=\mathscr F''(u,A)\,,
\end{equation}
for every $A\in\mathscr A(\Omega)$. 
\end{rmq}

\begin{rmq}\label{richrmq}
Recall that a family of open sets $\mathscr{R}\subset\mathscr{A}(\Omega)$ is said to be {\em rich} if for every $\{A_t\}_{t\in\mathbb{R}}\subset \mathscr{A}(\Omega)$
such that $A_s\subset\subset A_t$ whenever $s<t$, the set $\{t\in\mathbb{R}\,\,\colon\,\, A_t\not\in \mathscr{R}\}$ is at most countable. In~\cite[Theorem 15.18]{DMb} it is proved that 
\begin{equation*}
	\sup_{\substack{A'\in \mathscr{A}(\Omega) \\ A' \subset\subset A }}\mathscr F'(u,A') = \sup_{\substack{A'\in \mathscr{A}(\Omega) \\ A' 		\subset\subset A }}\mathscr F''(u,A')\,,
\end{equation*}
for every $u\in L^0(\Omega)$ and every $A\in \mathscr{A}(\Omega)$ if and only if
there exists a rich family $\mathscr{R}\subset\mathscr{A}(\Omega)$ such that
\begin{equation*}
		 \mathscr F'(u,A) =\mathscr  F''(u,A)\,,
\end{equation*}
for every $u\in L^0(\Omega)$ and every $A\in\mathscr{R}$. Therefore, from \cite[Theorem 16.9]{DMb}, the sequential characterisation of the $\Gamma$-convergence and definitions \eqref{233}, 
there exists a subsequence of $(K_j)$, not relabelled, and a rich family $\mathscr R\subset \mathscr A(\Omega)$ such that
the sequence of functionals $(\mathscr F_{K_j}(\cdot, A))$ $\Gamma$-converges in $L^0(\Omega)$ whenever $A\in\mathscr R$.
\end{rmq}

For every $\mu\in \mathcal M_p(\Omega;M)$ let $\mathscr F^{\mu}\colon L^0(\Omega)\times\mathscr{A}(\Omega)\to[0,\infty]$ be the functional defined by
\begin{equation}
\label{FmuA}
\mathscr F^\mu(u,A) =
 \begin{cases}
			\displaystyle\int_{A\setminus M} f(\nabla u)\,dx+ \int_{A\cap M}[u]^p\,d\mu\,, & \qquad\text{if $u_{\vert_{A\meno M}}\in L^{1,p}({A\meno M})$,}\\
			\infty\,,& \qquad \text{otherwise.}
\end{cases}
\end{equation}

\begin{lm}\label{lm:p0}
Let $\mu\in \mathcal M_p(\Omega;M)$ and let $A\in\mathscr{A}(\Omega)$. Then,
the restrictions to $L^q(\Omega)$ of the functionals $(\mathscr F_{K_j}(\cdot,A))$ defined by \eqref{eq:1.2} $\Gamma$-converge in $L^q(\Omega)$ to the restriction to $L^q(\Omega)$ of $\mathscr{F}^\mu(\cdot,A)$ if and only if $(\mathscr F_{K_j}(\cdot,A))$ $\Gamma$-converge in $L^0(\Omega)$ to  $\mathscr{F}^\mu(\cdot,A)$.
\end{lm}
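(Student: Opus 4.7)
The plan is to use truncation to bridge between the two topologies. The key observations are: (i) on the bounded set $\Omega$, any sequence converging in $L^0(\Omega)$ that is uniformly bounded in $L^\infty(\Omega)$ converges also in $L^q(\Omega)$ by dominated convergence, while the reverse implication $L^q \Rightarrow L^0$ is immediate; (ii) truncation decreases both functionals, namely $\mathscr{F}_{K_j}(u^t, A) \leq \mathscr{F}_{K_j}(u, A)$ and $\mathscr{F}^\mu(u^t, A) \leq \mathscr{F}^\mu(u, A)$, since $\nabla u^t = \nabla u \cdot \mathbf{1}_{\{|u|<t\}}$ almost everywhere and $[u^t] \leq [u]$ pointwise (by Remark~\ref{convpqe} and the 1-Lipschitz character of truncation); and (iii) $\mathscr{F}^\mu(u^t, A) \nearrow \mathscr{F}^\mu(u, A)$ as $t \to \infty$ by monotone convergence.

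For the direction \emph{$L^0$ $\Gamma$-convergence $\Rightarrow$ $L^q$ $\Gamma$-convergence}, the $\Gamma$-$\liminf$ inequality transfers trivially since $L^q$-convergence implies $L^0$-convergence. For the $\Gamma$-$\limsup$, fix $u \in L^q(\Omega)$ (the case $\mathscr{F}^\mu(u, A) = \infty$ is trivial). For each $t > 0$ the $L^0$ $\Gamma$-convergence applied at $u^t$ yields a recovery sequence $u_j^{(t)} \to u^t$ in $L^0(\Omega)$ with $\limsup_j \mathscr{F}_{K_j}(u_j^{(t)}, A) \leq \mathscr{F}^\mu(u^t, A)$; by Remark~\ref{rmq:trunc} I may assume $\|u_j^{(t)}\|_\infty \leq t$, hence $u_j^{(t)} \to u^t$ in $L^q(\Omega)$ by observation (i). Since $u^t \to u$ in $L^q$ (dominated convergence) and $\mathscr{F}^\mu(u^t, A) \to \mathscr{F}^\mu(u, A)$ as $t \to \infty$, a standard Attouch-type diagonal extraction produces $t_j \to \infty$ such that $v_j := u_j^{(t_j)} \to u$ in $L^q$ with $\limsup_j \mathscr{F}_{K_j}(v_j, A) \leq \mathscr{F}^\mu(u, A)$.

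For the direction \emph{$L^q$ $\Gamma$-convergence $\Rightarrow$ $L^0$ $\Gamma$-convergence}, the $\Gamma$-$\limsup$ part is analogous: for $u \in L^0(\Omega)$ each truncation $u^t$ lies in $L^\infty(\Omega) \subset L^q(\Omega)$, so the $L^q$ recovery of $u^t$ is automatically an $L^0$ recovery, and diagonalization uses $u^t \to u$ in $L^0$ together with (iii). For the $\Gamma$-$\liminf$, let $u_j \to u$ in $L^0(\Omega)$; I may assume $\liminf_j \mathscr{F}_{K_j}(u_j, A) < \infty$. For each $t > 0$, $u_j^t \to u^t$ in $L^0$ with uniform bound $\|u_j^t\|_\infty \leq t$, so $u_j^t \to u^t$ in $L^q$ by (i); combining (ii) with the $L^q$ $\Gamma$-$\liminf$ yields
\[
\liminf_{j\to\infty} \mathscr{F}_{K_j}(u_j, A) \geq \liminf_{j\to\infty}\mathscr{F}_{K_j}(u_j^t, A) \geq \mathscr{F}^\mu(u^t, A),
\]
and letting $t \to \infty$ via (iii) concludes.

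The only delicate point is the diagonal extraction in the $\Gamma$-$\limsup$ steps, which must simultaneously track the convergence of the recovery sequence (in the appropriate topology) and the energy bound as $t \to \infty$; but this is a routine diagonalization and does not constitute a real obstacle.
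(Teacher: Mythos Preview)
Your proof is correct and follows essentially the same approach as the paper: use truncation to pass between $L^0$ and $L^q$, exploiting that truncated sequences are uniformly bounded in $L^\infty$ and that both functionals decrease under truncation. The only cosmetic difference is that where you invoke an explicit Attouch-type diagonalization for the $\Gamma$-$\limsup$ steps, the paper instead appeals directly to the lower semicontinuity of the $\Gamma$-$\limsup$ functional (equivalently, to the identity $\lim_{t\to\infty}\mathscr F''(u^t,A)=\mathscr F''(u,A)$ established in Remark~\ref{rmq:trunc}); these are standard equivalent formulations of the same diagonal argument.
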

\begin{proof}
Assume that the restrictions $\Gamma$-converge in $L^q(\Omega)$ and let us prove the $\Gamma$-convergence in $L^0(\Omega)$. We fix $u\in L^0(\Omega)$.
As for the $\Gamma$-limsup inequality, we have to prove that 
\begin{equation}\label{limsupLq2}
\mathscr F^\mu(u,A)\ge \mathscr F''(u,A)\,.
\end{equation}
Let $t>0$. Since the truncated function $u^t\in L^q(\Omega)$ by assumption
there exists a sequence $u_j\in L^{q}(\Omega)$
converging  to $u^t $  in $L^q(\Omega)$ and such that $\mathscr F^\mu(u^t,A)\ge \Limsup_j \mathscr F_{K_j}(u_j, A)$.
Moreover, since convergence in $L^q(\Omega)$ implies convergence in $L^0(\Omega)$,
we have $ \Limsup_j \mathscr F_{K_j}(u_j, A)\ge \mathscr F''(u^t,A) $.
 By \eqref{FmuA}, we have $\mathscr F^\mu(u,A)\ge \mathscr F^\mu(u^t,A)$ and taking the limit as $t\to\infty$ we obtain
 \eqref{limsupLq2}, thanks to \eqref{limits}.

As for the $\Gamma$-liminf inequality, we have to prove that 
\begin{equation}\label{liminfLq2}
\mathscr F'(u, A)\ge \mathscr F^\mu(u,A)\,.
\end{equation}
Let $(v_j)$ be a sequence converging to $u$ in $L^0(\Omega)$,
with $\mathscr F'(u,A)=\liminf_{j} \mathscr F_{K_j}(v_j,A)$. By the dominated convergence theorem, $v_j^t$ converges to $u^t$ in $L^p(\Omega)$ for every given $t>0$. Therefore, by assumption and by Remark~\ref{rmq:trunc}  we have
\[
\liminf_{j\to\infty} \mathscr F_{K_j}(v_j,A)\ge \liminf_{j\to\infty} \mathscr F_{K_j}(v_j^t,A)\ge \mathscr F^\mu(u^t,A).
\]
Taking the limit as $t\to\infty$ we get \eqref{liminfLq2}, thanks to Remark~\ref{convpqe} and Fatou's lemma.
\medskip

Conversely, assume $\Gamma$-convergence in $L^0(\Omega)$ and let us prove $\Gamma$-convergence in $L^q(\Omega)$.
To this aim we introduce the functionals $\mathscr F'_q(\cdot,A)$ and $\mathscr F''_q(\cdot,A)$, defined on $L^q(\Omega)$ as in \eqref{233}
with $L^0(\Omega)$ replaced by $L^q(\Omega)$.
The proof is similar to that of~\cite[Lemma 7.2]{DMI}.
The $\Gamma$-liminf inequality is immediate: for every $u\in L^q(\Omega)$ and for every
sequence $u_j\in L^q(\Omega)$ converging to $u$ in $L^q(\Omega)$, clearly $u_j$ converges to $u$
in $L^0(\Omega)$ too, hence by assumption
\(
\liminf_{j}\mathscr F_{K_j}(u_j,A)\ge\mathscr F^\mu(u,A)
\)
which implies that
\[
\mathscr F'_q(u,A)\ge \mathscr F^\mu(u,A)\,.
\]

As for the $\Gamma$-limsup inequality, we have to prove that for every $u\in L^q(\Omega)$ we have
\begin{equation}\label{limsupLq}
\mathscr F^\mu(u,A)\ge \mathscr F''_q(u,A)\,.
\end{equation}
Let $t>0$. By assumption,
there exists a sequence $(v_j)$, converging  to the truncation $u^t$  in $L^0(\Omega)$, such that $\mathscr F^\mu(u^t,A)\ge \Limsup_j \mathscr F_{K_j}(v_j,A)$; by Remark~\ref{rmq:trunc} we may also assume that
$v_j$ is bounded by $t$ in $L^\infty(\Omega)$. Hence $(v_j)$ converges to $u^t$  in $L^q(\Omega)$ and
consequently $\mathscr F^\mu(u^t,A)\ge \mathscr F''_q(u^t,A)$.
 Since $\mathscr  F^\mu(u,A)\ge \mathscr F^\mu(u^t,A)$ we have
$
\mathscr F^\mu(u,A)\ge \mathscr F''_q(u^t,A).
$
Since $u^t\to u$ in $L^q(\Omega)$ as $t\to\infty$, the lower semicontinuity of $\mathscr F''_q(\cdot,A)$  implies \eqref{limsupLq}.
\end{proof}

We now prove a compactness theorem ensuring $\Gamma$-convergence of the functionals \eqref{eq:1.2} to a $\Gamma$-limit with the integral representation given in \eqref{FmuA}

\begin{teo}\label{mainthm}
There exist a subsequence
of $(K_j)$, not relabelled, a measure $\mu\in \mathcal{M}_p(\Omega;M)$ and a rich family $\mathscr R\subset \mathscr A(\Omega)$ such that the sequence of functionals $(\mathscr F_{K_j}(\cdot,A))$ defined by \eqref{eq:1.2} $\Gamma$-converges in $L^0(\Omega)$ to the functional $\mathscr F^\mu(\cdot,A)$ defined by \eqref{FmuA} for all $A\in\mathscr R$. The measure $\mu$ possibly depends on $f$ and on the subsequence $(K_j)$ but not on $\mathscr R$. Moreover, $A\in \mathscr R$ whenever $C_{1,p}(\partial A\cap M)=0$, thus $\Omega\in \mathscr R$.
\end{teo}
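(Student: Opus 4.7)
The plan is to combine abstract $\Gamma$-compactness with a capacitary integral representation theorem, in the spirit of \cite{DM, DM2, DMD}. By Remark~\ref{richrmq}, I first extract along a subsequence (not relabelled) a rich family $\mathscr R\subset\mathscr A(\Omega)$ and a limit functional $\mathscr F(\cdot,A)$ such that $\mathscr F_{K_j}(\cdot,A)$ $\Gamma$-converges to $\mathscr F(\cdot,A)$ in $L^0(\Omega)$ for every $A\in\mathscr R$. The $\Gamma$-limit $\mathscr F$ inherits lower semicontinuity in $u$, locality, convexity and positive $p$-homogeneity in the gradient, monotonicity in $A$, and, via a standard De~Giorgi--Letta type criterion, countable additivity on $\mathscr R$; the two-sided bounds in \eqref{Ac} pass to the limit as well.

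Next, I identify the bulk part. For $A'\in\mathscr R$ with $\overline{A'}\subset\Omega\setminus M$, assumption~\eqref{eq:1.1sec3} gives $K_j\cap A'=\emptyset$ for large $j$, hence $\mathscr F(u,A')=\int_{A'}f(\nabla u)\,dx$. Exhausting $A\setminus M$ by such $A'\in\mathscr R$ yields $\mathscr F(u,A)\ge\int_{A\setminus M}f(\nabla u)\,dx$ for every $A\in\mathscr R$ and every $u\in L^{1,p}(A\setminus M)$. For $u\in W^{1,p}(A)$ (so $[u]=0$) the constant sequence $u_j\equiv u$ is a recovery sequence, since $|K_j|\to 0$ implies $\int_{A\setminus K_j}f(\nabla u)\,dx\to\int_A f(\nabla u)\,dx$. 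Hence the defect
\begin{equation*}
\mathscr D(u,A):=\mathscr F(u,A)-\int_{A\setminus M}f(\nabla u)\,dx
\end{equation*}
is non-negative, vanishes on $W^{1,p}(A)$, and, by locality combined with the bulk identification, is concentrated on $M$.

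The crucial step is then to represent $\mathscr D$ as an integral against a capacitary measure. Adding a constant to $u$ alters neither $\nabla u$ nor the jumps, so $\mathscr F$ and the bulk integral are translation invariant, and so is $\mathscr D$. Together with convexity, positive $p$-homogeneity, and the vanishing on jump-free functions, this constrains $\mathscr D$ to depend on $u$ only through the $p$-quasicontinuous jump $[u]$ supplied by Lemma~\ref{lm:2.2}. Applying the capacitary representation theorem of \cite[Theorem~5.7]{DM} to $\mathscr D$ and extracting the $p$-homogeneous integrand via \cite[Lemma~2.3]{DMD}---the same machinery used in the proof of Lemma~\ref{lm:equivmis}---I obtain $\mu\in\mathcal M_p(\Omega;M)$ with $\mathscr D(u,A)=\int_{A\cap M}[u]^p\,d\mu$ for all $A\in\mathscr R$ and all $u\in L^{1,p}(A\setminus M)$. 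Independence of $\mu$ from the chosen rich family follows from Lemma~\ref{lm:criterionequiv}, because any two representing measures agree on the open sets of $\mathscr R$.

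Finally, if $C_{1,p}(\partial A\cap M)=0$ then $\mu(\partial A\cap M)=0$, so sandwiching $A$ between $A'\subset\subset A\subset\subset A''$ with $A',A''\in\mathscr R$ and passing to the inner/outer limits forces $\mathscr F'(\cdot,A)=\mathscr F''(\cdot,A)$, whence $A\in\mathscr R$; in particular $\Omega\in\mathscr R$ since $M\subset\Omega$ implies $\partial\Omega\cap M=\emptyset$. The hardest point is the third step: while translation invariance, convexity and $p$-homogeneity give the right formal structure, verifying the precise hypotheses of the capacitary representation theorem on $\mathscr D$ and ruling out any residual dependence on the one-sided traces $u^\pm$ beyond their difference $[u]$ requires a careful reduction using the test functions built in Lemmas~\ref{lm:2.1} and~\ref{lm:2.2}.
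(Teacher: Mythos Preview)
Your route and the paper's differ sharply. The paper's proof is essentially a citation: it invokes \cite[Theorem~7.8, Remark~7.9 and Theorem~8.2]{Cor}, which already deliver the subsequence, the measure $\mu\in\mathcal M_p(\Omega;M)$, and the rich family $\mathscr R$ for which the restrictions of $\mathscr F_{K_j}(\cdot,A)$ to $L^p(\Omega)$ $\Gamma$-converge in $L^p(\Omega)$ to $\mathscr F^\mu(\cdot,A)$; the only work done in the paper is the transfer from $L^p$ to $L^0$ via Lemma~\ref{lm:p0}. You instead attempt to rederive the integral representation from scratch---in effect reproving the relevant portion of~\cite{Cor}.

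Your first two steps (abstract compactness via Remark~\ref{richrmq}; identification of the bulk term on sets $A'$ with $\overline{A'}\cap M=\emptyset$) are sound. The genuine gap is the third step, which you correctly identify as the hardest. The representation theorem \cite[Theorem~5.7]{DM} applies to functionals of $u\in W^{1,p}$, not to functionals of $u\in L^{1,p}(A\setminus M)$; before you can invoke it you must show that $\mathscr D(u,A)$ depends on $u$ only through the $p$-quasicontinuous function $[u]$ on $A\cap M$. This is \emph{not} a formal consequence of translation invariance, convexity and $p$-homogeneity: one must prove that if $u,v\in L^{1,p}(A\setminus M)$ have $[u]=[v]$ $p$-q.e.\ on $A\cap M$ then $\mathscr D(u,A)=\mathscr D(v,A)$, and this requires a joining/cut-off construction near $M$ that glues a recovery sequence for $u$ to $v$ with vanishing error---precisely the technical core of~\cite{Cor}. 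As written, your appeal to \cite[Theorem~5.7]{DM} and \cite[Lemma~2.3]{DMD} is not justified, because the domain of $\mathscr D$ does not match the hypotheses of those results.

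A smaller point: in your last paragraph the sandwiching argument $A'\subset\subset A\subset\subset A''$ with $A',A''\in\mathscr R$ breaks down when $A=\Omega$, since no $A''\in\mathscr A(\Omega)$ can contain $\Omega$ compactly. For $\Omega$ one argues by inner regularity alone, using that $M$ is compactly contained in $\Omega$ so that $\mathscr F^\mu(u,\cdot)$ carries no mass near $\partial\Omega$.
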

\begin{proof}
By Remark~\ref{richrmq} and by \cite[Theorem 7.8, Remark 7.9 and Theorem 8.2]{Cor},
there exist a subsequence
of $(K_j)$, not relabelled, a measure $\mu\in \mathcal M_p(\Omega;M)$ and a rich family $\mathscr R\subset \mathscr A(\Omega)$ such that the restrictions to $L^p(\Omega)$ of the functionals $\mathscr F_{K_j}(\cdot,A)$ defined by \eqref{eq:1.2} $\Gamma$-converge in $L^p(\Omega)$ to the restriction to $L^p(\Omega)$ of the functional $\mathscr F^\mu(\cdot,A)$ for all $A\in \mathscr R$. Hence the conclusion follows thanks to Lemma~\ref{lm:p0}.
\end{proof}

\subsection{Convergence of minimisers}

This subsection is devoted to the convergence of the minimisers when the functionals $\mathscr F_{K_j}$ and $\mathscr F^\mu$ appearing in Theorem~\ref{mainthm} are perturbed with the functional $\mathscr G\colon L^0(\Omega)\times \mathscr{A}(\Omega)\to[0,\infty]$ defined by
\begin{equation*}
\mathscr G(u,A) := \begin{cases}
			\displaystyle\int_{A} g(x,u)\,dx\,, & \qquad\text{if $u\in L^{q}(A)$,}\\
			\infty\,,& \qquad \text{otherwise,}
\end{cases}
\end{equation*}
with $g$ satisfying \eqref{C}. 
A significant instance of a function satisfying \eqref{C} is $g(x,s)=|s-h(x)|^q$ for some $h\in L^q(\Omega)$. 
Notice that the compactness in $L^q(\Omega)$ of sequences of minimisers is not immediate even in this case,
except when $h$ is also bounded so that minimisers
satisfy apriori uniform $L^\infty$ estimates and their convergence to a minimiser is a mere consequence
of a $\Gamma$-convergence result (see Proposition~\ref{Gconvg} below).

\begin{teo}\label{mainthm2}
Let  $\mu\in\mathcal{M}_p(\Omega,M)$ and let $A\in \mathscr A(\Omega)$.
Assume that $(\mathscr F_{K_j}(\cdot, A))$ $\Gamma$-converge in $L^0(\Omega)$ to $\mathscr F^\mu(\cdot, A)$.
Then every sequence $(u_j)$ of minimisers of the problems
\begin{equation}\label{minj}
\min_{u\in L^{1,p}(A\meno K_j)} \Big\{ \int_{A\meno K_j} f(\nabla u)\,dx  + \int_A g(x,u)\,dx\Big\}
\end{equation}
has a subsequence which converges in $L^q(\Omega)$ to a minimiser of the problem
\begin{equation}\label{minj0}
\min_{u\in L^{1,p}(A\meno M)} \Big\{ \int_{A\meno M} f(\nabla u)\,dx + \int_{A\cap M} [u]^p\,d\mu + \int_A g(x,u)\,dx
\Big\}\,.
\end{equation}
Moreover, the minimum values of \eqref{minj} converge to the minimum value of \eqref{minj0}.
\end{teo}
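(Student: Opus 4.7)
My plan is the classical route for convergence of minimisers under $\Gamma$-convergence: establish equi-coercivity in $L^q$, extract an a.e.-convergent subsequence, combine the $\Gamma$-liminf with a recovery sequence to show that the limit minimises \eqref{minj0}, and finally upgrade to strong $L^q$ convergence via a Brezis--Lieb argument. For equi-coercivity, testing \eqref{minj} against $u \equiv 0$ bounds the minimum values by $\int_A g(x,0)\,dx \le c_2(|A|+\|a_2\|_{L^1(A)})$, uniformly in $j$. The lower bound in \eqref{C3} then gives $\sup_j \|u_j\|_{L^q(A)} < \infty$, while $\sup_j \int_{A\setminus K_j} f(\nabla u_j)\,dx < \infty$ controls the gradient.

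For any open set $A' \subset\subset A \setminus M$, condition \eqref{eq:1.1sec3} ensures $K_j \cap A' = \emptyset$ for $j$ large, so $u_j \in W^{1,p}(A')$ with a uniform bound. Rellich plus a diagonal extraction along an exhaustion of $A\setminus M$ by compact sets yields a subsequence (not relabelled) converging a.e.\ on $A\setminus M$, hence on $A$ since $M$ has Lebesgue measure zero; call the limit $u$. Extending $u_j$ and $u$ by zero to $\Omega\setminus A$ gives $u_j \to u$ in $L^0(\Omega)$, and Fatou places $u \in L^q(A)$.

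To identify $u$ as a minimiser of \eqref{minj0}, the $\Gamma$-liminf hypothesis and Fatou applied to $g(x,u_j)+a_1 \ge 0$ yield
\[
\liminf_j [\mathscr F_{K_j}(u_j,A) + \mathscr G(u_j,A)] \ge \mathscr F^\mu(u,A) + \mathscr G(u,A),
\]
and in particular $u|_{A\setminus M} \in L^{1,p}(A\setminus M)$ is admissible in \eqref{minj0}. For the matching upper bound, take any competitor $v \in L^{1,p}(A\setminus M) \cap L^q(A)$, extended by zero outside $A$. Lemma~\ref{lm:p0} promotes the assumed $L^0$ $\Gamma$-convergence to $\Gamma$-convergence in $L^q(\Omega)$, so there exists a recovery sequence $v_j \to v$ in $L^q(\Omega)$ with $\limsup_j \mathscr F_{K_j}(v_j, A) \le \mathscr F^\mu(v, A)$; the two-sided growth in \eqref{C3} makes the Nemytskii operator $u\mapsto g(\cdot, u)$ continuous from $L^q(A)$ to $L^1(A)$, so $\mathscr G(v_j,A) \to \mathscr G(v,A)$. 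Minimality of $u_j$ then squeezes the total energies between the liminf and $\mathscr F^\mu(v,A) + \mathscr G(v,A)$, which shows that $u$ minimises \eqref{minj0}; specialising to $v=u$ also gives convergence of the minimum values.

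It remains to upgrade $L^0$ convergence of $(u_j)$ to convergence in $L^q(\Omega)$. The two separate lower-semicontinuity estimates from the previous step, paired with the convergence of the sum, force both $\mathscr F_{K_j}(u_j, A) \to \mathscr F^\mu(u, A)$ and $\mathscr G(u_j, A) \to \mathscr G(u, A)$ separately. Fatou applied to the non-negative integrand $g(x,u_j) + a_1 - c_1|u_j|^q \ge 0$ then gives $\limsup_j \int_A |u_j|^q\,dx \le \int_A |u|^q\,dx$, while the reverse inequality is the usual Fatou, so $\|u_j\|_{L^q(A)} \to \|u\|_{L^q(A)}$. The Brezis--Lieb lemma (or equivalently Vitali, using the equi-integrability provided by norm convergence) combined with a.e.\ convergence concludes that $u_j \to u$ in $L^q(A)$, hence in $L^q(\Omega)$. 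The main obstacle is exactly this last step: a.e.\ convergence plus an $L^q$-bound alone never deliver strong $L^q$ convergence, and one genuinely needs both inequalities of \eqref{C3} in order to extract norm convergence of $(u_j)$ from the convergence of $\mathscr G(u_j, A)$.
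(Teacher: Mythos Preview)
Your proof is correct and follows essentially the same route as the paper. The paper packages the two key steps into separate propositions: Proposition~\ref{Gconvg} establishes that $\mathscr F_{K_j}+\mathscr G$ $\Gamma$-converges in $L^0(\Omega)$ to $\mathscr F^\mu+\mathscr G$ (using Lemma~\ref{lm:p0} and the $L^q$-continuity of $\mathscr G$, exactly as you do inline), and Proposition~\ref{prop:ptw} upgrades $L^0$ convergence plus $\int_A g(x,u_j)\,dx\to\int_A g(x,u)\,dx$ to strong $L^q$ convergence via the generalised dominated convergence theorem, which is the same mechanism as your Fatou-on-$g(x,u_j)+a_1-c_1|u_j|^q$ plus Brezis--Lieb argument.
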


In order to prove Theorem~\ref{mainthm2}, we need the following results about the $\Gamma$-convergence in $L^0(\Omega)$
of the functionals $\mathscr F_{K_j}+\mathscr G$ to $\mathscr F^\mu+\mathscr G$. Notice that the conclusion is not obvious because $\mathscr G$  is,  in general, not continuous in $L^0(\Omega)$.

\begin{prop}\label{Gconvg}
Let $\mu\in\mathcal{M}_p(\Omega,M)$ and let $A\in \mathscr A(\Omega)$. Assume that $(\mathscr F_{K_j}(\cdot,A))$ $\Gamma$-converge in $L^0(\Omega)$ to $\mathscr F^\mu(\cdot,A)$. Then, the sequence of functionals
$(\mathscr F_{K_j}(\cdot, A)+\mathscr G(\cdot, A))$ $\Gamma$-converge in $L^0(\Omega)$  to the functional
$\mathscr F^{\mu}(\cdot, A)+\mathscr G(\cdot, A)$.
\end{prop}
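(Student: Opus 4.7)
Set $\mathscr H_j(\cdot,A):=\mathscr F_{K_j}(\cdot,A)+\mathscr G(\cdot,A)$ and $\mathscr H^\mu(\cdot,A):=\mathscr F^\mu(\cdot,A)+\mathscr G(\cdot,A)$. The claim amounts to establishing separately the $\Gamma$-liminf and $\Gamma$-limsup inequalities for $\mathscr H_j$ towards $\mathscr H^\mu$ in $L^0(\Omega)$. The central obstacle is that $\mathscr G(\cdot,A)$ is not continuous with respect to convergence in $L^0(\Omega)$, so each direction has to be handled by exploiting only one of the two one-sided growth bounds in~\eqref{C3}.

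For the $\Gamma$-liminf inequality, let $u_j\to u$ in $L^0(\Omega)$ and assume, without loss of generality, that the liminf of $\mathscr H_j(u_j,A)$ is finite. Passing to a subsequence, I would arrange that the liminf is realised as a limit along which $\mathscr F_{K_j}(u_j,A)$ and $\mathscr G(u_j,A)$ converge separately, and that $u_j\to u$ almost everywhere in $\Omega$. The continuity assumption~\eqref{C2} then gives $g(x,u_j)\to g(x,u)$ a.e., and the lower bound in~\eqref{C3} makes $g(x,u_j)+a_1(x)\ge 0$, so Fatou's lemma yields $\mathscr G(u,A)\le\liminf_j\mathscr G(u_j,A)$. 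Summing with the assumed $\Gamma$-liminf inequality $\mathscr F^\mu(u,A)\le\liminf_j\mathscr F_{K_j}(u_j,A)$ one obtains $\mathscr H^\mu(u,A)\le\liminf_j\mathscr H_j(u_j,A)$.

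For the $\Gamma$-limsup inequality I would first treat a bounded $u\in L^\infty(\Omega)$ with $\mathscr H^\mu(u,A)<\infty$. By Remark~\ref{rmq:trunc} there is a recovery sequence $u_j\to u$ in $L^0(\Omega)$, uniformly bounded by $\|u\|_\infty$ in $L^\infty$, with $\Limsup_j\mathscr F_{K_j}(u_j,A)\le\mathscr F^\mu(u,A)$. The uniform $L^\infty$ bound combined with $L^0$ convergence (hence, up to a subsequence, with a.e.\ convergence) gives $u_j\to u$ in $L^q(A)$; applying dominated convergence to $g(x,u_j)$ with the integrable majorant $c_2(1+\|u\|_\infty^q+a_2(x))$ provided by the upper bound in~\eqref{C3} yields $\mathscr G(u_j,A)\to\mathscr G(u,A)$, so $\Limsup_j\mathscr H_j(u_j,A)\le\mathscr H^\mu(u,A)$.

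For general $u$ with $\mathscr H^\mu(u,A)<\infty$, I would approximate by the truncations $u^t=u\wedge t\vee(-t)\in L^\infty(\Omega)$. Under the standing assumption $\mathscr F^\mu=\mathscr F'=\mathscr F''$, formula~\eqref{limits} delivers $\mathscr F^\mu(u^t,A)\to\mathscr F^\mu(u,A)$ as $t\to\infty$, while $\mathscr G(u^t,A)\to\mathscr G(u,A)$ follows from dominated convergence with majorant $c_2(1+|u|^q+a_2(x))\in L^1(A)$, whose integrability is forced by the lower bound in~\eqref{C3} together with the finiteness of $\mathscr G(u,A)$. Since the sequential $\Gamma$-limsup $\mathscr H''(\cdot,A)$ of $(\mathscr H_j(\cdot,A))$ in $L^0(\Omega)$ is lower semicontinuous on $L^0(\Omega)$, the bounded case applied to each $u^t$ together with $u^t\to u$ in $L^0(\Omega)$ gives $\mathscr H''(u,A)\le\liminf_{t\to\infty}\mathscr H''(u^t,A)\le\liminf_{t\to\infty}\mathscr H^\mu(u^t,A)=\mathscr H^\mu(u,A)$, which is the required limsup bound.
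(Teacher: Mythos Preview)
Your argument is correct. The $\Gamma$-liminf step matches the paper exactly: both use Fatou's lemma (via the lower bound in~\eqref{C3}) to get lower semicontinuity of $\mathscr G(\cdot,A)$ in $L^0(\Omega)$ and add it to the assumed $\Gamma$-liminf inequality for $\mathscr F_{K_j}$.

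For the $\Gamma$-limsup your route differs slightly from the paper's. The paper does not handle bounded $u$ first and then pass to truncations of $\mathscr H^\mu$; instead it invokes Lemma~\ref{lm:p0} to upgrade the $\Gamma$-convergence of $(\mathscr F_{K_j}(\cdot,A))$ from $L^0(\Omega)$ to $L^q(\Omega)$, and then uses that $\mathscr G(\cdot,A)$ is continuous along strong $L^q$ convergence. Your approach unfolds essentially the same truncation machinery that is hidden inside Lemma~\ref{lm:p0} and Remark~\ref{rmq:trunc}, but applies it directly to the sum $\mathscr H_j$: a bounded recovery sequence from Remark~\ref{rmq:trunc}, dominated convergence for $\mathscr G$, and then the passage $u^t\to u$ using~\eqref{limits} and the lower semicontinuity of the $\Gamma$-limsup. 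The paper's version is more modular (the $L^0$/$L^q$ equivalence is isolated once and for all in Lemma~\ref{lm:p0}), while yours is self-contained and avoids that lemma. One small remark: in your bounded case you pass to an a.e.\ convergent subsequence to apply dominated convergence; to recover the conclusion for the full sequence you should note that every subsequence admits such a further subsequence with the same limit, which forces $\mathscr G(u_j,A)\to\mathscr G(u,A)$ along the whole sequence.
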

\begin{proof}
Let $u\in L^0(A)$.
By assumption and 
by the lower semicontinuity of the functional $\mathscr G$
with respect to the convergence in $L^0(\Omega)$, which follows from Fatou's Lemma, we have
\begin{equation}
\label{Glimsupinf}
\begin{split}
	\mathscr F^{\mu}(u,A)+\mathscr G(u,A)
	& \le  \inf\Big\{ \liminf_{j\to\infty} \big(\mathscr F_{K_j}(u_j,A)+\mathscr G(u_j,A)\big)\colon
	\text{$u_j\to u$ in $L^0(\Omega)$}  \Big\}\\
	& \le
	\inf\Big\{ \Limsup_{j\to\infty} \big(\mathscr F_{K_j}(u_j,A)+\mathscr G(u_j,A)\big)\colon	\text{$u_j\to u$ in $L^0(\Omega)$}  \Big\}\,.
\end{split}
\end{equation}
We claim that these inequalities are in fact equalities.
Indeed, if  $u\not\in L^q(A)$, by \eqref{C3} the left-hand side in \eqref{Glimsupinf} is infinite and the conclusion is obvious.
If instead $u\in L^q(A)$ then by Lemma~\ref{lm:p0} 
and by the continuity of the functional $\mathscr G$ with respect to the strong convergence
in $L^q(\Omega)$, we have that
\[
\inf\Big\{ \Limsup_{j\to\infty} \Big(\mathscr F_{K_j}(u_j,A)+\mathscr G(u_j,A)\Big)\colon	\text{$u_j\to u$ in $L^q(\Omega)$}  \Big\}
\le \mathscr F^{\mu}(u,A) + \mathscr G(u,A)\,.
\]
Since clearly the convergence in $L^q(\Omega)$ implies the convergence in $L^0(\Omega)$, we deduce the equalities in \eqref{Glimsupinf}.
This concludes the proof.
\end{proof}

In order to deduce the convergence of minimisers in $L^q(\Omega)$, we also need the following result.

\begin{prop}\label{prop:ptw}
Let $A\in \mathscr A(\Omega)$, let $u\in L^q(A)$, let $(u_j)$ be a sequence converging to $u$ in $L^0(\Omega)$ and assume that
\begin{equation}\label{convergence}
\lim_{j\to\infty} \int_{A}g(x,u_j)\,dx= \int_{A}g(x,u)\,dx\,.
\end{equation}
Then $(u_j)$ converges to $u$ strongly in $L^q(\Omega)$.
\end{prop}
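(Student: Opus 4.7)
The plan is to apply Vitali's convergence theorem after passing to a subsequence, using the coercivity in~\eqref{C3} to extract equi-integrability of $(|u_j|^q)$ from the convergence of the $g$-integrals. It suffices to show that every subsequence of $(u_j)$ admits a further subsequence converging to $u$ in $L^q(A)$, since otherwise one could find a subsequence bounded away from $u$ in $L^q(A)$, contradicting the construction below. So, after extraction (not relabelled), I may assume that $u_j\to u$ pointwise a.e.\ in $\Omega$, which is possible because convergence in $L^0(\Omega)$ is convergence in measure on the bounded set $\Omega$. By~\eqref{C2} this yields $g(x,u_j(x))\to g(x,u(x))$ for a.e.\ $x\in A$.

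Next, I would set
\[
H_j(x):=g(x,u_j(x))+a_1(x)\,,\qquad H(x):=g(x,u(x))+a_1(x)\,,
\]
so that the lower bound in~\eqref{C3} gives $H_j\ge c_1|u_j|^q\ge 0$ and similarly $H\ge 0$. Since $u\in L^q(A)$, the upper bound in~\eqref{C3} and $a_1,a_2\in L^1(\Omega)$ imply $H\in L^1(A)$. From \eqref{convergence} and $a_1\in L^1(A)$ I conclude that $\int_A H_j\,dx\to\int_A H\,dx$, while $H_j\to H$ almost everywhere in $A$.

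At this point I would invoke Scheff\'e's lemma: for non-negative functions, a.e.\ convergence together with convergence of the integrals implies convergence in $L^1(A)$. Hence $H_j\to H$ in $L^1(A)$, and in particular $(H_j)$ is equi-integrable on $A$; combined with the pointwise bound $c_1|u_j|^q\le H_j$ from~\eqref{C3}, the sequence $(|u_j|^q)$ is equi-integrable on $A$ as well. Vitali's convergence theorem, applied to the a.e.\ convergent sequence $(u_j)$ with equi-integrable $q$-th powers, then yields $u_j\to u$ strongly in $L^q(A)$.

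The main obstacle, conceptually, is converting the scalar information~\eqref{convergence} into the equi-integrability of $(|u_j|^q)$. What makes this work is the shift to the non-negative quantity $H_j=g(\cdot,u_j)+a_1$, which lets Scheff\'e's lemma upgrade integral convergence to $L^1$ convergence, together with the coercivity inequality $c_1|s|^q\le g(x,s)+a_1(x)$ from~\eqref{C3}, which transfers equi-integrability from $(H_j)$ to $(|u_j|^q)$. The subsequence principle at the start is a standard device to deduce convergence of the full sequence from the fact that every subsequence has a further convergent subsequence with the same limit.
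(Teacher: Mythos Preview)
Your proof is correct and takes essentially the same approach as the paper: both use the coercivity bound $c_1|s|^q\le g(x,s)+a_1(x)$ from \eqref{C3} together with a dominated-convergence-type argument to upgrade \eqref{convergence} to $L^q$ convergence. The only cosmetic difference is that the paper works directly with convergence in measure and applies the generalised dominated convergence theorem twice (first to obtain $\int_A|u_j|^q\,dx\to\int_A|u|^q\,dx$, then to deduce strong $L^q$ convergence), whereas you pass to an a.e.\ convergent subsequence and invoke Scheff\'e's lemma followed by Vitali's theorem.
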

\begin{proof}
By \eqref{C2}, the convergence of $(u_j)$ to $u$ in $L^0(\Omega)$ implies that $g(x,u_j)\to g(x,u)$ in $L^0(\Omega)$.
Thus, by the generalised dominated convergence theorem, using \eqref{convergence} and the lower bound in \eqref{C3} we deduce that
\begin{equation}\label{convnorm}
	\lim_{j\to\infty} \int_A |u_j|^q\,dx = \int_A |u|^q\,dx\,.
\end{equation}
By using again the generalised dominated convergence theorem, \eqref{convnorm} implies the strong convergence in $L^q(\Omega)$ of $(u_j)$ to $u$.
\end{proof}

\begin{proof}[Proof of Theorem~\ref{mainthm2}]
Let $u_j\in L^{1,p}(A \meno K_j)$ be a sequence of minimisers of the minimum problems \eqref{minj}.
By \cite[Proposition 7.1]{DMb}, $\sup_j [\mathscr F_{K_j}(u_j,A)+\mathscr G(u_j,A)]<\infty$, hence by \eqref{Ac} and \eqref{C3} there exists a function $u\in L^{1,p}(A\meno M)\cap L^q(A)$ and a subsequence, not relabelled, 
$( u_j)$ converging to $ u$ weakly in $W^{1,p}(A\meno M_\rho)$, for every $\rho>0$,
where 
\(
M_\rho = \left\{ x\in \Omega \colon {\rm dist}(x,\Sigma)\le\rho \right\}
\). Therefore, $(u_j)$ converges  in $L^0(\Omega)$  to a function $u\in L^q(A)$.
Moreover, as a general consequence of the $\Gamma$-convergence result of  Proposition~\ref{Gconvg},
see~\cite[Corollary 7.20]{DMb}, $u$ is a solution of the minimum problem \eqref{minj0} and
\begin{equation}\label{minTOmin2}
\lim_{j\to\infty} \Big\{\int_{A\setminus K_j} f(\nabla u_j)\,dx+ \int_A g(x,u_j)\,dx\Big\}
=\int_{A\setminus M}f(\nabla u)\,dx+ \int_{A\cap M} [u]^p\,d\mu+\int_A g(x,u)\,dx\,.
\end{equation}

Then, by the $\Gamma$-convergence assumption 
\begin{equation}
\label{minj3}
\int_{A\meno M} f(\nabla u)\,dx +  \int_{A\cap M} [u]^p\,d\mu
\le  \liminf_{j\to\infty} \int_{A\meno K_j} f(\nabla u_j)\,dx\,.
\end{equation}
The lower semicontinuity of $\int_A g(x,\cdot)dx$ with respect to the convergence in $L^0(\Omega)$ implies
\begin{equation}
\label{minj2}
\int_A g(x,u)\,dx\le \liminf_{j\to\infty} \int_A g(x,u_j)\,dx\,.
\end{equation}
Combining \eqref{minj3} with \eqref{minj2} we get
\[
\int_{A\meno M} f(\nabla u)\,dx + \int_{A\cap M} [u]^p\,d\mu+ \int_A g(x,u)\,dx 
\le \liminf_{j\to\infty}\Big\{ \int_{A\meno K_j} f(\nabla u_j)\,dx  + \int_A g(x,u_j)\,dx\Big\}\,,
\]
and by \eqref{minTOmin2} all the inequalities are in fact equalities. Hence by \eqref{minj2} we have
\[
	\int_A g(x,u) =\lim_{j\to\infty}\int_A g(x,u_j)\,dx\,.
\]
Thus, by Proposition~\ref{prop:ptw} we can deduce that $(u_j)$ converges to $u$ strongly in $L^q(\Omega)$, and this implies the desired conclusion.
\end{proof}

\section{Approximation of capacitary measures}\label{sec:density}

This section is devoted to prove that all capacitary measures concentrated on smooth hypersurfaces can be approximated by homogeneisation as described in Section~\ref{sec:3}. This density result means that the class  $\mathcal L_p(\Omega;M)$ of limit measures, i.e., limit measures that can appear in the conclusion of Theorem~\ref{mainthm} (see the following definition), coincides with the whole class $\mathcal M_p(\Omega;M)$ of capacitary measures concentrated on $M$.

\begin{dfn}\label{class:mu}
Let $\mu\in \mathcal{M}_p(\Omega; M)$ and let $A\in\mathscr A(\Omega)$. We say that $\mu\in\mathcal L_p(A; M)$ if 
there exists a sequence of compact sets $(K_j)$ satisfying \eqref{eq:1.1sec3} and such that
the sequence of functionals $(\mathscr F_{K_j}(\cdot,A) )$ defined by \eqref{eq:1.2} $\Gamma$-converge in $L^0(\Omega)$ to
the functional $\mathscr F^\mu(\cdot,A)$ defined as in \eqref{FmuA}.
In this case, we say that the sequence $(K_j)$ is associated with $\mu$.
\end{dfn}

\subsection{Stability results}
A sufficient condition for the stability of the class introduced in Definition~\ref{class:mu} is the $\Gamma$-convergence
of the corresponding functionals $\mathscr F^\mu$.

\begin{lm}
\label{Gamma-stability}
Let  $A\in\mathscr A(\Omega)$. 
Let $\mu^k \in\mathcal{L}_p(A; M)$ for every $k\in\mathbb N$, and let $\mu\in\mathcal M_p(A; M)$.
Assume that $(\mathscr F^{\mu^k}(\cdot, A))$ $\Gamma$-converge in $L^0(\Omega)$ to $\mathscr F^\mu(\cdot, A)$. Then $\mu\in\mathcal{L}_p(A;M)$.
\end{lm}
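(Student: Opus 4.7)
The plan is to exhibit, by diagonal extraction, a single sequence of compact sets $(K_k)$ satisfying \eqref{eq:1.1sec3} such that $\mathscr F_{K_k}(\cdot,A)$ $\Gamma$-converges in $L^0(\Omega)$ to $\mathscr F^\mu(\cdot,A)$. The input consists of two families of $\Gamma$-convergences: for every fixed $k\in\mathbb N$, since $\mu^k\in\mathcal L_p(A;M)$, there exists a sequence $(K_j^k)_{j\in\mathbb N}$ of compact sets satisfying \eqref{eq:1.1sec3} as $j\to\infty$ with
\[
\mathscr F_{K_j^k}(\cdot,A)\stackrel{\Gamma}{\longrightarrow}\mathscr F^{\mu^k}(\cdot,A)\quad\text{in }L^0(\Omega)\text{ as }j\to\infty,
\]
while by hypothesis $\mathscr F^{\mu^k}(\cdot,A)\stackrel{\Gamma}{\to}\mathscr F^\mu(\cdot,A)$ in $L^0(\Omega)$ as $k\to\infty$.

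The key technical tool I would invoke is the metrizability of $\Gamma$-convergence on the separable metric space $L^0(\Omega)$ (see \cite[Chapter 10]{DMb}): there is a pseudometric $d_\Gamma$ on the class of lower semicontinuous functionals $L^0(\Omega)\to[0,\infty]$ whose convergence is exactly $\Gamma$-convergence in $L^0(\Omega)$. In particular, for every fixed $k$, $d_\Gamma(\mathscr F_{K_j^k}(\cdot,A),\mathscr F^{\mu^k}(\cdot,A))\to 0$ as $j\to\infty$, and $d_\Gamma(\mathscr F^{\mu^k}(\cdot,A),\mathscr F^\mu(\cdot,A))\to 0$ as $k\to\infty$.

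Then I would select, for every $k$, an index $j(k)\in\mathbb N$ so large that
\[
d_\Gamma\!\left(\mathscr F_{K_{j(k)}^k}(\cdot,A),\mathscr F^{\mu^k}(\cdot,A)\right)<\frac{1}{k}\qquad\text{and}\qquad\max_{x\in K_{j(k)}^k}\mathrm{dist}(x,M)<\frac{1}{k},
\]
both being possible by the two properties of $(K_j^k)_j$ recalled above. Setting $K_k:=K_{j(k)}^k$, the sequence $(K_k)$ is made of compact sets satisfying \eqref{eq:1.1sec3}, while the triangle inequality combined with the hypothesis on $\mathscr F^{\mu^k}$ yields $d_\Gamma(\mathscr F_{K_k}(\cdot,A),\mathscr F^\mu(\cdot,A))\to 0$, i.e., $\mathscr F_{K_k}(\cdot,A)\stackrel{\Gamma}{\to}\mathscr F^\mu(\cdot,A)$ in $L^0(\Omega)$. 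By Definition~\ref{class:mu} this gives $\mu\in\mathcal L_p(A;M)$.

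The only delicate point is justifying the metrizability of $\Gamma$-convergence in the setting of extended real-valued functionals on $L^0(\Omega)$; since $\Omega$ is bounded, $L^0(\Omega)$ is separable and metrizable, so the standard metrization theorems for $\Gamma$-convergence on separable spaces apply. If one prefers to avoid an explicit metric, one can reach the same conclusion via the sequential characterisation used in \eqref{233}: by Theorem~\ref{mainthm} applied to the diagonal sequence $(K_k)$, some subsequence already $\Gamma$-converges to a functional of the form $\mathscr F^\nu$ for a measure $\nu\in\mathcal M_p(A;M)$, and one then identifies $\nu$ with $\mu$ by comparing the $\Gamma$-$\liminf$ and $\Gamma$-$\limsup$ against recovery sequences built from the $(K_j^k)$ together with a diagonal lemma and the uniqueness of the $\Gamma$-limit.
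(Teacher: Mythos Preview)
Your argument is correct and follows the same diagonal-extraction strategy as the paper, but the two differ in how the ``distance'' governing $\Gamma$-convergence is made quantitative. You invoke directly the metrizability of $\Gamma$-convergence on the separable metric space $L^0(\Omega)$ (cf.\ \cite[Theorem~10.22]{DMb}) and run a triangle-inequality diagonalisation. The paper instead makes this metric explicit through Moreau--Yosida regularisations in $L^q$: it uses Theorem~\ref{mainthm2} to obtain, for each $k$, pointwise convergence of $\mathcal Y_\tau\mathscr F_{K_j^k}(\cdot,A)$ to $\mathcal Y_\tau\mathscr F^{\mu^k}(\cdot,A)$ (and similarly from $\mu^k$ to $\mu$), then diagonalises over a countable dense subset of $L^q(A)$ and finitely many $\tau$, and finally appeals to \cite[Theorem~9.16]{DMb} together with Lemma~\ref{lm:p0} to recover $\Gamma$-convergence in $L^0(\Omega)$. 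The paper's route has the advantage of recycling results already established in Section~\ref{sec:3} and of avoiding any discussion of whether $\mathscr F_{K_j}(\cdot,A)$ is lower semicontinuous on $L^0(\Omega)$; your route is shorter, but for the metric $d_\Gamma$ to apply you should either note that these functionals are l.s.c.\ on $L^0(\Omega)$ or replace them by their l.s.c.\ envelopes (which does not affect $\Gamma$-limits). Your fallback sketch via Theorem~\ref{mainthm} and identification of the limit measure is also viable but, as written, leaves the identification step vague.
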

\begin{proof}
Given a functional $\mathscr F\colon L^q(\Omega)\times \mathscr{A}(\Omega)\to[0,\infty]$,
for every $A\in\mathscr A(\Omega)$ and for every $\tau>0$ we define a Moreau-Yosida-type approximation of index $\tau$ of $\mathscr F(\cdot,A)$ setting
\begin{equation*}
	\mathcal Y_\tau \mathscr F(u,A)  = \inf \big\{ \mathscr F(v,A) +\tau \|u-v\|^q_{L^q(A)}: v\in L^q(A)\big\}\,,
\end{equation*}
for all $u\in L^q(A)$.

Since $\mu^k\in\mathcal{L}_p(A;M)$, by Definition~\ref{class:mu}, there exists a sequence of compact sets $(K_j^k)$, with $\max\{{\rm dist}(x,M)\colon x\in K_j^k\}\to0^+$ as $j\to\infty$, such that the sequence of functionals $(\mathscr F_{K_j^k}(\cdot, A))$ defined by \eqref{eq:1.1sec3} $\Gamma$-converge in $L^0(\Omega)$  to the functional $\mathscr F^{\mu^k}(\cdot, A)$. Possibly passing to a subsequence we may assume $\max\{{\rm dist}(x,M)\colon x\in K_j^k\}\le 1/j$. By Theorem~\ref{mainthm2} 
\begin{equation}\label{moryo}
	\lim_{j\to\infty} \mathcal{Y}_\tau \mathscr F_{K_j^k}(u,A) = \mathcal{Y}_\tau \mathscr F^{\mu^k}(u,A)\quad
	\text{for every $u\in L^q(A)$, $\tau>0$, and $k\in\mathbb N$.}
\end{equation}
As in Lemma~\ref{lm:p0}, by the $\Gamma$-convergence assumption we can prove that the restrictions to $L^q(\Omega)$ of $(\mathscr F^{\mu^k}(\cdot, A))$ $\Gamma$-converge in $L^q(\Omega)$ to the restriction to $L^q(\Omega)$ of $\mathscr F^{\mu}(\cdot, A)$. Then arguing as in Theorem~\ref{mainthm2} we obtain
\begin{equation}\label{moryo2}
	\lim_{k\to\infty} \mathcal{Y}_\tau \mathscr F^{\mu^k}(u,A) = \mathcal{Y}_\tau \mathscr F^{\mu}(u,A) \quad \text{for every $u\in L^q(A)$, $\tau>0$, and $k\in\mathbb N$.}
\end{equation}
Let $(u_i)$ be a countable dense sequence in $L^q(A)$. By \eqref{moryo} for every $k$ there exists $j_k\ge k$ such that 
\[
\left|\mathcal{Y}_\tau \mathscr F_{K_{j_k}^k}(u_i,A) - \mathcal{Y}_\tau \mathscr F^{\mu^k}(u_i,A)\right|\le 1/k\,,
\]
every $i=1,\dots, k$ and every $\tau=1,\dots, k$. By \eqref{moryo2} this implies 
\[
	\lim_{k\to\infty} \mathcal{Y}_\tau \mathscr F_{K_{j_k}^k}(u_i,A) = \mathcal{Y}_\tau \mathscr F^\mu(u_i,A)\,,\qquad
	\text{for every $i,\tau\in\mathbb N$.}
\]
From \cite[Theorem 9.16]{DMb}, using Lemma~\ref{lm:p0} again, we deduce that the sequence of functionals $(\mathscr F_{K_{j_k}^k}(\cdot, A))$
$\Gamma$-converge in $L^0(\Omega)$  to the functional $\mathscr F^{\mu}(\cdot, A)$. Hence to conclude that $\mu\in\mathcal L_p(A;M)$, it suffices to
observe that  $\max\{{\rm dist}(x,M)\colon x\in K_{j_k}^k\}\le 1/j_k\le1/k$  for $j$ large enough.
\end{proof}

A simple condition for the $\Gamma$-convergence of the functionals $\mathscr F^{\mu}$ is provided by the following lemma.

\begin{lm}\label{lm:supernuovo}
Let  $A\in\mathscr A(\Omega)$. 
Let $\mu^k\in \mathcal{M}_p(A;M)$ for every $k\in \mathbb N$, and let $\mu\in \mathcal M_p(A;M)$. Assume that $(\mu^k)$ is monotone and that $\mu$ is the pointwise limit of the measures $\mu^k$, defined as $\mu(B):=\lim_{k\to\infty} \mu^k(B)$ for every $B\in\mathscr B(A)$.  
Then $(\mathscr F^{\mu^k}(\cdot, A))$ $\Gamma$-converges in $L^0(\Omega)$ to $\mathscr F^\mu(\cdot, A)$.
\end{lm}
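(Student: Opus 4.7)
The plan is to combine the monotonicity of $(\mu^k)$ with the $L^0(\Omega)$-lower semicontinuity of each $\mathscr F^\nu(\cdot,A)$ for $\nu\in\mathcal M_p(A;M)$, and to invoke the general principle that a monotone sequence of lower semicontinuous functionals $\Gamma$-converges to its pointwise limit. I will argue in the non-decreasing case ($\mu^k\le\mu^{k+1}$); the non-increasing case is analogous via dominated convergence. The crux of the argument will be the lower semicontinuity step, specifically the passage from strong $W^{1,p}$ convergence on each side of $M$ to $p$-q.e.\ convergence of the two-sided traces, which is what allows Fatou's lemma to be applied against the capacitary measure.

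For the pointwise convergence of the functionals, the Dirichlet term in $\mathscr F^{\mu^k}(u,A)$ is independent of $k$, while the monotone convergence theorem applied to the increasing sequence of measures $(\mu^k)$ and the non-negative Borel integrand $[u]^p$ gives
\[
\int_{A\cap M}[u]^p\,d\mu^k \;\uparrow\; \int_{A\cap M}[u]^p\,d\mu.
\]
Hence $\mathscr F^{\mu^k}(u,A)\uparrow\mathscr F^\mu(u,A)$ pointwise on $L^0(\Omega)$.

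To prove the lower semicontinuity of $\mathscr F^\nu(\cdot,A)$ on $L^0(\Omega)$, I take any $L^0(\Omega)$-convergent sequence $(u_j)$ with $\sup_j\mathscr F^\nu(u_j,A)<\infty$. By \eqref{Ac}, $\nabla u_j$ is bounded in $L^p(A\setminus M;\mathbb R^n)$, so $u|_{A\setminus M}\in L^{1,p}(A\setminus M)$ and, on each side of $M$ locally, $u_j\rightharpoonup u$ weakly in $W^{1,p}$ up to a subsequence. Since the jump $[\cdot]$ is sublinear and $s\mapsto s^p$ is convex, the functional $\mathscr F^\nu(\cdot,A)$ is convex in $u$; it therefore suffices to verify its strong $W^{1,p}$ lower semicontinuity, whence Mazur's lemma upgrades it to weak $W^{1,p}$ lower semicontinuity. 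Strong $W^{1,p}$ convergence on each side gives, up to a further subsequence, $p$-q.e.\ convergence of the one-sided $p$-quasicontinuous representatives on each local graph patch provided by Lemma~\ref{lm:2.1}, hence $[u_j]\to[u]$ $p$-q.e.\ on $A\cap M$ by Lemma~\ref{lm:2.2}. Fatou's lemma applied with $\nu$ (which vanishes on $p$-null sets) controls the jump term, while convex weak lower semicontinuity of $\int f(\nabla u)\,dx$ handles the Dirichlet term. Summing yields the desired lower semicontinuity.

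To conclude $\Gamma$-convergence, the $\Gamma$-limsup inequality at $u$ follows from the constant recovery sequence $u_k\equiv u$ together with the pointwise convergence established above. For the $\Gamma$-liminf, given $u_j\to u$ in $L^0(\Omega)$ and any fixed $l\in\mathbb N$, monotonicity yields $\mathscr F^{\mu^j}(u_j,A)\ge\mathscr F^{\mu^l}(u_j,A)$ for all $j\ge l$, hence by the lower semicontinuity of $\mathscr F^{\mu^l}$
\[
\liminf_{j\to\infty}\mathscr F^{\mu^j}(u_j,A)\ge\liminf_{j\to\infty}\mathscr F^{\mu^l}(u_j,A)\ge\mathscr F^{\mu^l}(u,A),
\]
and taking the supremum over $l$ together with the pointwise monotone convergence of the previous step concludes the proof.
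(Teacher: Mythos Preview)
Your proof is correct and takes essentially the same route as the paper: both rest on the principle that an increasing sequence of lower semicontinuous functionals $\Gamma$-converges to its pointwise supremum (this is \cite[Proposition~5.4 and Remark~5.5]{DMb}, which the paper simply cites). The paper's argument is a one-liner invoking that reference, whereas you unfold the $\Gamma$-liminf and $\Gamma$-limsup inequalities directly and, in addition, supply a proof of the $L^0$-lower semicontinuity of $\mathscr F^\nu(\cdot,A)$ for general $\nu\in\mathcal M_p(A;M)$ via convexity of the jump term and Mazur's lemma. This lower semicontinuity is genuinely needed to apply the cited principle; the paper does not prove it explicitly for arbitrary capacitary measures (for $\Gamma$-limits it is automatic, and for Radon measures in $W^{-1,p'}$ it follows from Lemma~\ref{lm:76}), so your argument actually fills a small implicit gap.

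One minor point: when you pass from $L^0$-convergence with bounded $\mathscr F^\nu$ to local weak $W^{1,p}$-convergence on each side of $M$, only $\nabla u_j$ is controlled in $L^p$, not $u_j$ itself; a truncation as in Remark~\ref{rmq:trunc} is needed first. Your treatment of the non-increasing case (``analogous via dominated convergence'') is brief---monotone convergence for decreasing families of measures is more delicate than for increasing ones---but the paper's own proof also addresses only the increasing case explicitly, and the lemma is applied in the paper solely in that regime.
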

\begin{proof}
In view of~\cite[Proposition 5.4]{DMb} (see also~\cite[Remark 5.5]{DMb}), 
if
$(\mu^k(B))$ is a monotone increasing sequence for every $B\in\mathscr B(A)$ then
the $\Gamma$-limit of the functionals $\mathscr F^{\mu^k}$ coincides with the pointwise monotone limit, which by assumption is given by $\mathscr F^\mu$. 
\end{proof}

An immediate consequence of Lemma~\ref{Gamma-stability} and Lemma~\ref{lm:supernuovo} is that a measure belongs to $\mathcal{L}_p(A;M)$
whenever it can be written as the pointwise monotone limit of measures of class $\mathcal{L}_p(A;M)$.

The stability of the class introduced in Definition~\ref{class:mu} holds also under convergence in dual Sobolev spaces. To see this we need the following result about the lower semicontinuity of the integral of the jump of $u$ when both the measure and the function vary.

\begin{lm}\label{lm:76}
Let $\Omega_0$ and $\Sigma_0$ be defined by \eqref{cilynder} and \eqref{Ngraph}, respectively. 
Let $(\mu^k)$ be a sequence of non-negative Radon measures in $W^{-1,p'}(\Omega_0)$ converging to $\mu$ strongly in $W^{-1,p'}(\Omega_0)$ and let $(u_k)$ be a sequence of functions converging to $u$ weakly in $W^{1,p}(\Omega_0\meno \Sigma_0)$.
Then
\begin{equation}
\label{eq:76}
\int_{\Sigma_0} [u]^p\,d\mu\le\liminf_{k\to\infty} \int_{\Sigma_0}[u_k]^p\,d\mu^k\,.
\end{equation}
\end{lm}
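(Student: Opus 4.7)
My plan is to reduce the inequality to a weak--strong duality pairing between $(\mu^k)$ and the $p$-th powers of a single Sobolev function representing the jump. Following the proof of Lemma~\ref{lm:2.2}, I invoke linear continuous Sobolev extension operators on the Lipschitz domains $\Omega_0^{\pm}$ to obtain $v_k, w_k \in W^{1,p}(\Omega_0)$ with $v_k = u_k$ a.e.\ in $\Omega_0^+$ and $w_k = u_k$ a.e.\ in $\Omega_0^-$, and analogous extensions $v, w$ for the limit $u$. Continuity of the extension operators transfers the weak $W^{1,p}(\Omega_0 \setminus \Sigma_0)$--convergence of $(u_k)$ into weak $W^{1,p}(\Omega_0)$--convergence $v_k \rightharpoonup v$ and $w_k \rightharpoonup w$. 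Setting $z_k := \tilde v_k - \tilde w_k$ and $z := \tilde v - \tilde w$ in terms of $p$-quasicontinuous representatives, Lemma~\ref{lm:2.2} yields $[u_k] = |z_k|$ and $[u] = |z|$ $p$-q.e.\ on $\Sigma_0$, with $z_k \rightharpoonup z$ weakly in $W^{1,p}(\Omega_0)$.

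The next step linearises the nonlinearity $s \mapsto |s|^p$ through truncation. For fixed $t>0$, set $z_k^t := (z_k \wedge t) \vee (-t)$ and similarly $z^t$. Since truncation is Lipschitz (hence weakly continuous in $W^{1,p}$) and commutes with the $p$-quasicontinuous representative (Remark~\ref{convpqe}), one has $z_k^t \rightharpoonup z^t$ weakly in $W^{1,p}(\Omega_0)$ with the uniform bound $|z_k^t| \leq t$. The Rellich--Kondrachov theorem then gives $z_k^t \to z^t$ strongly in $L^r(\Omega_0)$ for every finite $r$, and, together with the $L^\infty$-bound and dominated convergence, $|z_k^t|^{p-2} z_k^t \to |z^t|^{p-2} z^t$ strongly in $L^{p'}(\Omega_0)$. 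Combining this with the weak $L^p$--convergence of $\nabla z_k^t$ via the chain-rule identity $\nabla |z_k^t|^p = p\, |z_k^t|^{p-2} z_k^t\, \nabla z_k^t$, and using the uniform $W^{1,p}$-bound $\| |z_k^t|^p \|_{W^{1,p}(\Omega_0)} \leq C t^{p-1} \|z_k^t\|_{W^{1,p}(\Omega_0)}$, I upgrade this to $|z_k^t|^p \rightharpoonup |z^t|^p$ weakly in $W^{1,p}(\Omega_0)$.

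For the duality step, I fix a cutoff $\varphi \in C_c^\infty(\Omega_0)$ with $0 \leq \varphi \leq 1$ and $\varphi \equiv 1$ on a neighbourhood of $\Sigma_0$ (which captures all the mass of $\mu^k,\mu$, since the relevant measures in the paper are supported on $\Sigma_0$). Then $\varphi |z_k^t|^p \in W^{1,p}_0(\Omega_0) \cap L^\infty$ and $\varphi |z_k^t|^p \rightharpoonup \varphi |z^t|^p$ weakly in $W^{1,p}_0(\Omega_0)$. Pairing with the strong $W^{-1,p'}(\Omega_0)$--convergence of $\mu^k$ to $\mu$ gives
\[
\int_{\Sigma_0} |z^t|^p \, d\mu \;=\; \langle \mu,\, \varphi |z^t|^p \rangle \;=\; \lim_{k\to\infty} \langle \mu^k,\, \varphi |z_k^t|^p \rangle \;\leq\; \liminf_{k\to\infty} \int_{\Sigma_0} |z_k|^p \, d\mu^k,
\]
where the last inequality uses $\mu^k \geq 0$, $\varphi \leq 1$, $|z_k^t|^p \leq |z_k|^p$ and the localisation of $\mu^k$ on $\Sigma_0$. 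Finally, letting $t \to \infty$ and applying monotone convergence on the left-hand side — with Remark~\ref{convpqe} ensuring that $[u^t]^p = |z^t|^p \uparrow |z|^p = [u]^p$ $p$-q.e.\ on $\Sigma_0$ — yields the stated inequality \eqref{eq:76}.

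The principal obstacle is the weak $W^{1,p}$--convergence $|z_k^t|^p \rightharpoonup |z^t|^p$ in the second step: with only weak convergence of $z_k^t$ available, one must genuinely exploit the strong--weak product structure, which is why the $L^\infty$-truncation is indispensable. A subsidiary point is the interplay between the ambient duality pairing on $\Omega_0$ and the integral localised to $\Sigma_0$; this is handled cleanly by the cutoff $\varphi$, once one uses that the measures of interest are concentrated on $\Sigma_0$.
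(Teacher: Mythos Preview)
Your approach is essentially the same as the paper's, and most of the steps are fine, but the cutoff step contains a genuine error. You choose a single $\varphi\in C^\infty_c(\Omega_0)$ with $\varphi\equiv 1$ on a neighbourhood of $\Sigma_0$. Such a function does not exist: by definition~\eqref{Ngraph}, $\Sigma_0$ is the graph of $\phi$ over the full open disk $\Pi_0$, so $\overline{\Sigma_0}$ meets the lateral boundary of the cylinder $\Omega_0$ and is not compactly contained in $\Omega_0$. Consequently the identity $\int_{\Sigma_0}|z^t|^p\,d\mu=\langle \mu,\varphi|z^t|^p\rangle$ cannot be obtained with a compactly supported $\varphi$, and your duality pairing loses part of the mass of $\mu$ near $\partial\Omega_0$.

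The paper fixes exactly this by taking an \emph{increasing} sequence $(\varphi_i)\subset C^\infty_c(\Omega_0)$ with $0\le\varphi_i\le\varphi_{i+1}\le 1$ and $\varphi_i\to 1$ pointwise. For each fixed $i$ one has the weak--strong pairing
\[
\int_{\Sigma_0}|\tilde z^t|^p\varphi_i\,d\mu=\langle\mu,|z^t|^p\varphi_i\rangle=\lim_{k\to\infty}\langle\mu^k,|z_k^t|^p\varphi_i\rangle\le\liminf_{k\to\infty}\int_{\Sigma_0}|\tilde z_k|^p\,d\mu^k,
\]
and then one sends $i\to\infty$ by monotone convergence (using $\mu\ge0$) before letting $t\to\infty$. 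Your argument is easily repaired in this way; the rest of your proof---the extension to $z_k\in W^{1,p}(\Omega_0)$, the truncation, and the weak $W^{1,p}$-convergence of $|z_k^t|^p$ (which you justify more explicitly than the paper does)---matches the paper's.
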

\begin{proof}
It suffices to prove that
\begin{equation}\label{lm7:claim}
\int_{\Sigma_0} |\tilde z|^p\, d\mu \le \liminf_{k\to\infty}
 \int_{\Sigma_0}
	|\tilde z_k|^p\,d\mu^k
\end{equation}
for all sequences $(z_k)$ weakly converging to $z$ in $W^{1,p}(\Omega_0)$.
Let $(\varphi_i)$ be a sequence in $C^\infty_c(\Omega_0)$ such that $0\le\varphi_i\le \varphi_{i+1}\le 1$ for all $i$ and $\varphi_i(x)\to 1$ for all $x$.
Let $t>0$ and let $z_k^t$ and $z^t$ be the  truncations of $z_k$ and $z$.
Since $\mu^k$ converges to $\mu$ strongly in $W^{-1,p'}(\Omega_0)$,
$|z_k^t|^p\,\varphi_i$ converges to $|z^t|^p\,\varphi_i$ weakly in 
$W^{1,p}_0(\Omega_0)$, and
\[
	\langle \mu^k\!,|z_k^t|^p\varphi_i\rangle = \! \int_{\Sigma_0}\!\!\!\!
	|\tilde z_k^t|^p\,\varphi_i\,d\mu^k,\quad \langle \mu,|z^t|^p\varphi_i\rangle = \! \int_{\Sigma_0}\!\!\!\!
	|\tilde z^t|^p\,\varphi_i\,d\mu\,,
\]
we obtain
\begin{equation}\label{lm7:claim22}
\int_{\Sigma_0} |\tilde z^t|^p\,\varphi_i\, d\mu \le \liminf_{k\to\infty}
 \int_{\Sigma_0}
	|\tilde z_k^t|^p\,\varphi_i\,d\mu^k\le \liminf_{k\to\infty}\int_{\Sigma_0}
	|\tilde z_k|^p\,d\mu^k\,.
\end{equation}
Letting $i\to\infty$ and $t\to\infty$, by the monotone convergence theorem we obtain \eqref{lm7:claim}  from \eqref{lm7:claim22}.

We now prove \eqref	{eq:76}. Let $\Omega_0^\pm$ the two connected components of $\Omega_0\setminus \Sigma_0$, as in \eqref{2Omega}. 
By applying a linear extension operator we find 
$v_k,v,w_k,w\in W^{1,p}(\Omega_0)$ such that
\begin{equation*}
	u_k^+ = \tilde v_k\,,  \quad u^+= \tilde v\,, \quad u_k^-=\tilde w_k\,, \quad u^- = \tilde w\quad \text{$p$-q.e. on $\Sigma_0$.}
\end{equation*} 
Let $z_k=\tilde v_k-\tilde w_k$ and $z=\tilde v-\tilde w$ so that 
\[
[u_k]=|z_k|\quad  \text{and}\quad [u]=|z| \quad  \text{$p$-q.e. on $\Sigma_0$.}
\]
By linearity $(z_k)$ converges to $z$ weakly in $W^{1,p}_0(\Omega_0)$, hence \eqref{eq:76} follows from \eqref{lm7:claim}.
\end{proof}

We now prove the closure of the class $\mathcal{L}_p(\Omega_0;M)$ under strong convergence in dual Sobolev space.

\begin{lm}\label{lm:77}
Let $\Omega_0$ be defined by \eqref{cilynder} and let $\Sigma_0=\Omega_0\cap \Sigma$ be representable as in \eqref{Ngraph}.
Let $\mu^k\in\mathcal{L}_p(\Omega_0;M)$ for every $k\in\mathbb N$ and let $\mu\in\mathcal M_p(\Omega_0;M)$.
Assume that $(\mu^k)$ is a sequence of non-negative Radon measures in $W^{-1,p'}(\Omega_0)$ converging to $\mu$ strongly in $W^{-1,p'}(\Omega_0)$. 
Then $\mu\in\mathcal{L}_p(\Omega_0;M)$.
\end{lm}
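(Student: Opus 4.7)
The plan is to apply Lemma~\ref{Gamma-stability} with $A=\Omega_0$, which reduces the problem to proving that $(\mathscr F^{\mu^k}(\cdot,\Omega_0))$ $\Gamma$-converges in $L^0(\Omega)$ to $\mathscr F^\mu(\cdot,\Omega_0)$; the conclusion $\mu\in\mathcal L_p(\Omega_0;M)$ then follows at once.

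For the $\Gamma$-liminf inequality I would take $u_k\to u$ in $L^0(\Omega)$ with finite liminf and pass to a subsequence realising it. The coercivity~\eqref{Ac} gives a uniform $L^p$ bound on $\nabla u_k$, and truncating at level $t$ neither increases the gradient term (by the argument of Remark~\ref{rmq:trunc}) nor the jump term (since truncation is a contraction, hence $[u_k^t]\le[u_k]$ $p$-q.e., cf.\ Remark~\ref{convpqe}). Thus $u_k^t\rightharpoonup u^t$ weakly in $W^{1,p}(\Omega_0\setminus\Sigma_0)$, and the convex lower semicontinuity of $\int f(\nabla\cdot)\,dx$ together with Lemma~\ref{lm:76} applied to the jump term yield $\mathscr F^\mu(u^t,\Omega_0)\le\liminf_k\mathscr F^{\mu^k}(u_k,\Omega_0)$; sending $t\to\infty$ by monotone convergence completes the liminf.

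For the $\Gamma$-limsup I would first settle the case of bounded $u$ with $\mathscr F^\mu(u,\Omega_0)<\infty$ and then treat the general case by truncation and a diagonal extraction. For bounded $u$ the candidate recovery sequence is the constant one $u_k=u$; only the jump term requires work, and it suffices to show
\[
\lim_{k\to\infty}\int_{\Sigma_0}[u]^p\,d\mu^k=\int_{\Sigma_0}[u]^p\,d\mu.
\]
By Lemma~\ref{lm:2.1}, pick Sobolev extensions $v,w\in W^{1,p}(\Omega_0)\cap L^\infty$ with $\tilde v=u^+$ and $\tilde w=u^-$ $p$-q.e.\ on $\Sigma_0$, and set $z=\tilde v-\tilde w$; then $|z|^p\in W^{1,p}(\Omega_0)\cap L^\infty$ and $|z|=[u]$ $p$-q.e.\ on $\Sigma_0$. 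For any cutoff $\chi\in C^\infty_c(\Omega_0)$ the product $\chi|z|^p$ belongs to $W^{1,p}_0(\Omega_0)$; the assumed strong convergence $\mu^k\to\mu$ in $W^{-1,p'}(\Omega_0)$, combined with the identification of the duality pairing via $p$-quasicontinuous representatives, then gives $\int\chi[u]^p\,d\mu^k\to\int\chi[u]^p\,d\mu$. Since $M\subset\Sigma\setminus\partial\Sigma$ is compact in $\Omega$, one can choose $\chi$ to equal $1$ on a neighbourhood of $\Omega_0\cap M$; because $\mu^k$ and $\mu$ are concentrated on $M$, the cutoff disappears from the integrals, yielding the desired convergence. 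The unbounded case follows from $\mathscr F^\mu(u^t,\Omega_0)\uparrow\mathscr F^\mu(u,\Omega_0)$ (Remark~\ref{convpqe} and monotone convergence) together with a diagonal extraction in $t$.

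The main obstacle I foresee is the existence of a compactly supported cutoff $\chi$ identically equal to $1$ on a neighbourhood of $\Omega_0\cap M$: this is automatic when $\Omega_0\cap M$ is relatively compact in $\Omega_0$, but may fail if $\Omega_0\cap M$ reaches the lateral boundary of the cylinder. In that situation one must use an increasing family $\chi_i\uparrow1$ in $C^\infty_c(\Omega_0)$ and combine the inner convergence $\int\chi_i[u]^p\,d\mu^k\to\int\chi_i[u]^p\,d\mu$ (valid for each fixed $i$) with a uniform-in-$k$ estimate of the tails $\int(1-\chi_i)[u]^p\,d\mu^k$ in order to exchange the limits $i,k\to\infty$; this is where a further appeal to the representation of Lemma~\ref{lm:equivmis} and the strong (rather than merely weak) $W^{-1,p'}$-convergence should bridge the gap.
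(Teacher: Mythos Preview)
Your proposal is correct and follows exactly the paper's strategy: reduce via Lemma~\ref{Gamma-stability} to the $\Gamma$-convergence of $(\mathscr F^{\mu^k}(\cdot,\Omega_0))$ to $\mathscr F^\mu(\cdot,\Omega_0)$, obtain the $\Gamma$-liminf from Lemma~\ref{lm:76} together with truncation, and obtain the $\Gamma$-limsup by testing with the constant recovery sequence $u_k=u^t$ and letting the strong $W^{-1,p'}$-convergence pass the jump integral to the limit, then sending $t\to\infty$.

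The only place where you are more cautious than the paper is the duality step in the $\Gamma$-limsup. The paper simply writes
\[
\int_{\Sigma_0}[u^t]^p\,d\mu^k=\langle\mu^k,|v-w|^p\rangle\longrightarrow\langle\mu,|v-w|^p\rangle=\int_{\Sigma_0}[u^t]^p\,d\mu,
\]
treating $|v-w|^p$ as a legitimate test function, whereas you correctly observe that $|v-w|^p$ lies a priori only in $W^{1,p}(\Omega_0)$ and that a cutoff $\chi\in C^\infty_c(\Omega_0)$ is needed to land in $W^{1,p}_0(\Omega_0)$. Your worry that $M\cap\Omega_0$ may touch the lateral boundary of the cylinder, so that no single cutoff with $\chi\equiv1$ near $M\cap\Omega_0$ exists, is well taken; the paper does not comment on this. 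Your suggested remedy---an exhaustion $\chi_i\uparrow1$ combined with a uniform tail estimate coming from the \emph{strong} (not merely weak) $W^{-1,p'}$-convergence---is the natural line, though the tail bound is the part that still needs to be spelled out. It is worth noting that in every application of this lemma inside the paper (Proposition~\ref{prop:density}) the approximating measures are of the form $\theta_k\mathscr H^{n-1}\mres M$ with $\theta_k\to\theta$ in $L^{p'}(M;\mathscr H^{n-1})$, for which the convergence $\int[u^t]^p\,d\mu^k\to\int[u^t]^p\,d\mu$ follows directly from H\"older's inequality and the trace theorem, bypassing the cutoff issue entirely.
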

\begin{proof} 
By Lemma~\ref{Gamma-stability}, it suffices to prove the $\Gamma$-convergence in $L^0(\Omega)$ of
the sequence $(\mathscr F^{\mu^k}(\cdot, \Omega_0))$ to $\mathscr F^\mu(\cdot, \Omega_0)$.
Let $\widehat{\mathscr F}'$ and $\widehat{\mathscr F}''$ be the $\Gamma$-liminf and $\Gamma$-limsup of $\mathscr F^\mu$ in $L^0(\Omega)$ of this sequence. Given $u\in L^0(\Omega)$ and $t>0$ we consider the truncation $u^t$.
By extension theorems in Sobolev spaces we may find functions
$v,w\in W^{1,p}(\Omega_0)\cap L^\infty(\Omega_0)$ satisfying
\begin{equation*}
	 (u^t)^+= \tilde v\quad \text{and} \quad (u^t)^- = \tilde w\quad \text{$p$-q.e. on $\Sigma_0$.}
\end{equation*}
This implies that 
\[
\int_{\Sigma_0} [u^t]^p\,d\mu^k=\langle \mu^k,|v-w|^p\rangle\to\langle \mu,|v-w|^p\rangle=\int_{\Sigma_0} [u^t]^p\,d\mu
\]
which gives $$\widehat{\mathscr{F}}''(u^t,\Omega_0)\le \lim_{k\to\infty}\mathscr F^{\mu^k}(u^t,\Omega_0)=\mathscr F^{\mu}(u^t,\Omega_0)\le \mathscr F^{\mu}(u,\Omega_0).$$
Taking the limit as $t\to\infty$ we obtain $\widehat{\mathscr{F}}''(u,\Omega_0)\le \mathscr F^\mu(u,\Omega_0)$. On the other hand, we also have $ \mathscr F^\mu(u,\Omega_0)\le \widehat{\mathscr  F}'(u,\Omega_0)$. This is an immediate consequence of Lemma~\ref{lm:76} and of the lower semicontinuity of the integral $\int_{\Omega_0}f(\nabla u)\, dx$. Combining the two inequalities we get the desired $\Gamma$-convergence and we conclude.
\end{proof}

\subsection{Density results}

We begin by proving that the collection of all non-negative finite Radon measures on $M$ belonging to $W^{-1,p'}(\mathbb R^n)$ is dense in the class  of all capacitary measures concentrated on $M$.

\begin{lm}\label{lemmanuovo}
Let $\mu\in\mathcal M_p(\Omega, M)$ and let  $A\in\mathscr A(\Omega)$. Then there exists a sequence $(\mu^k)$ of non-negative finite
Radon measures of class  $W^{-1,p'}(\Omega)$ such that $(\mathscr F^{\mu^k}(\cdot, A))$ $\Gamma$-converges in $L^0(\Omega)$ to  $\mathscr F^\mu(\cdot, A)$.
\end{lm}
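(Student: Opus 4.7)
The plan is to use the representation provided by Lemma~\ref{lm:equivmis} to reduce the problem to approximating a measure of the form $\psi\sigma$, then to truncate $\psi$ at level $k$, and finally to invoke Lemma~\ref{lm:supernuovo} to deduce $\Gamma$-convergence from monotone pointwise convergence of the truncated measures.

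By Lemma~\ref{lm:equivmis}, I would choose a non-negative Radon measure $\sigma$ of class $W^{-1,p'}(\Omega)$ and a Borel function $\psi\colon M\to[0,\infty]$ such that $\mu$ is equivalent to $\psi\sigma$. In view of \eqref{equivmisA}, the two functionals $\mathscr F^\mu(\cdot,A)$ and $\mathscr F^{\psi\sigma}(\cdot,A)$ coincide, so it suffices to approximate $\psi\sigma$ in the $\Gamma$-convergence sense. I would then set $\psi_k:=\psi\wedge k$ and define $\mu^k(B):=\int_{B\cap M}\psi_k\,d\sigma$ for every Borel $B\subset\Omega$; since $M$ is compact and $\sigma$ is a Radon measure, $\sigma(M)<\infty$, and since $\psi_k\le k$, each $\mu^k$ is a non-negative finite Radon measure concentrated on $M$.

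Next I would verify that $\mu^k\in W^{-1,p'}(\Omega)$. For $\varphi\in C^\infty_c(\Omega)$, exploiting the positivity of $\sigma$ and the fact that $|\varphi|\in W^{1,p}_0(\Omega)$ with $\||\varphi|\|_{W^{1,p}_0}=\|\varphi\|_{W^{1,p}_0}$, one obtains
\[
\bigl|\langle \mu^k,\varphi\rangle\bigr| \,=\, \bigg|\int_M \varphi\,\psi_k\,d\sigma\bigg| \,\le\, k\int_M|\varphi|\,d\sigma \,\le\, k\,\|\sigma\|_{W^{-1,p'}(\Omega)}\,\|\varphi\|_{W^{1,p}_0(\Omega)}\,,
\]
so $\mu^k\in W^{-1,p'}(\Omega)$. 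Since $\sigma$ vanishes on sets of $p$-capacity zero, so does $\mu^k$, and therefore $\mu^k\in\mathcal M_p(\Omega;M)$.

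Finally, the sequence $(\psi_k)$ is monotonically increasing with $\psi_k\nearrow\psi$ pointwise, so by the monotone convergence theorem $\mu^k(B)\nearrow\psi\sigma(B)$ for every Borel $B\subset\Omega$. Lemma~\ref{lm:supernuovo} then yields the $\Gamma$-convergence of $\mathscr F^{\mu^k}(\cdot,A)$ to $\mathscr F^{\psi\sigma}(\cdot,A)=\mathscr F^\mu(\cdot,A)$ in $L^0(\Omega)$, completing the proof. The only real subtlety lies in checking the dual-space membership $\mu^k\in W^{-1,p'}(\Omega)$, which is however a straightforward consequence of the uniform boundedness of $\psi_k$ and the positivity of $\sigma$.
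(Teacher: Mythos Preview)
Your proof is correct and follows essentially the same approach as the paper: represent $\mu$ via Lemma~\ref{lm:equivmis} as $\psi\sigma$, truncate to $\mu^k=(\psi\wedge k)\sigma$, and apply Lemma~\ref{lm:supernuovo} to the monotone sequence. You supply more detail than the paper on the verification that $\mu^k\in W^{-1,p'}(\Omega)$, but the strategy is identical.
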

\begin{proof}
By Lemma~\ref{lm:equivmis} there exist a Borel function $\psi\colon M\to [0,\infty]$ and  a non-negative
Radon measure $\sigma$ of class $W^{-1,p'}(\mathbb R^n)$ such that 
\begin{equation}
\label{equivms2}
	\int_{A\cap M} [u]^p\,d\mu = \int_{A\cap M} [u]^p\psi\,d\sigma\,,
\end{equation}
for all $u\in L^{1,p}(A\setminus M)$. Since the measure $\psi\sigma$ is the pointwise limit of the measures
$\mu^k = (\psi\wedge k)\sigma$ the $\Gamma$-limit of $\mathscr F^{\mu^k}(\cdot,A)$ is $\mathscr F^\mu(\cdot,A)$, thanks to Lemma~\ref{lm:supernuovo} and \eqref{equivms2}. Clearly every $\mu^k$ is a finite non-negative Radon measure of class $W^{-1,p'}(\mathbb R^n)$.
\end{proof}

In view of Lemmas~\ref{Gamma-stability} and \ref{lemmanuovo}, to prove that $\mathcal L_p(A;M)=\mathcal M_p(A;M)$ it is enough to show that every
non-negative Radon measure belonging to $W^{-1,p'}(\mathbb R^n)$ also belong to $\mathcal L_p(A, M)$.
We first prove this conclusion in the special case of hyperplanes.

\begin{prop}\label{prop:density} 
Assume that $M$ and $\Sigma$ are contained in a hyperplane. Let $\Omega_0$ be as in \eqref{cilynder} so that $\Pi_0=\Omega_0\cap \Sigma$.
Then $\mathcal M_p(\Omega_0,M)=\mathcal L_p(\Omega_0;M)$.
\end{prop}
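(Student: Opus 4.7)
The plan is to combine the stability and density results of this section with a direct Neumann-sieve construction, which is available in the flat case. By Lemma~\ref{lemmanuovo} every $\mu\in\mathcal{M}_p(\Omega_0,M)$ is the $\Gamma$-limit (through $\mathscr{F}^{\mu^k}$) of non-negative finite Radon measures of class $W^{-1,p'}(\Omega_0)$, so Lemma~\ref{Gamma-stability} reduces the proof to showing that every such non-negative finite Radon measure belongs to $\mathcal{L}_p(\Omega_0;M)$. Invoking Lemma~\ref{lm:77} I can further reduce to any subfamily that is dense in the strong topology of $W^{-1,p'}(\Omega_0)$; a natural dense subfamily, available precisely because $M$ is flat, consists of measures of the form $\theta\,\mathscr{H}^{n-1}\mres M$ with $\theta$ piecewise constant on a finite dyadic partition $\{Q_\ell\}$ of $M$ into flat $(n-1)$-cubes. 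Density within non-negative Radon measures follows from mollification along the hyperplane combined with a simple-function approximation of the resulting smooth densities.

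A standard gluing argument then handles the sum: if $\mu_1,\mu_2$ are supported in disjoint cubes $Q_1,Q_2\subset M$ and $K_j^{(i)}$ is associated with $\mu_i$ while being supported in a narrow tubular neighbourhood of $Q_i$, the sequence $K_j^{(1)}\cup K_j^{(2)}$ is associated with $\mu_1+\mu_2$. This reduces matters to the single-block case $\mu=c\,\mathscr{H}^{n-1}\mres Q$, with $c\ge 0$ and $Q\subset M$ a flat $(n-1)$-cube.

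For this building block, I would construct $K_j$ as a periodic sieve on $M$: tile $Q$ by $(n-1)$-cubes $Q_{i,j}$ of side $\e_j\to 0$ centred at the points $x_{i,j}$ of the $\e_j$-lattice, and in each $Q_{i,j}$ place a single compact obstacle $T_{i,j}\subset M$ of diameter $\delta_j$. The scale $\delta_j$ is dictated by capacity matching: for $1<p<n$, $\delta_j^{n-p}/\e_j^{n-1}\to c/\gamma$, where $\gamma$ is the $p$-capacity of the unit disk across a flat membrane computed with the density $f$; for the critical case $p=n$ an analogous logarithmic scaling is used. Setting $K_j:=\bigcup_i T_{i,j}\subset M$ satisfies \eqref{eq:1.1sec3} trivially. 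The limsup inequality is then proved by correcting a given $u\in L^{1,p}(\Omega_0\setminus M)$ with localised capacitary correctors $\varphi_{i,j}$ near each $T_{i,j}$ that carry a jump of size $[u](x_{i,j})$ across $M$; the cell-by-cell energies sum to a Riemann sum $\sum_i c\,\e_j^{n-1}[u]^p(x_{i,j})$ converging to $\int_Q[u]^p\,d\mu$.

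The main obstacle is the $\Gamma$-liminf inequality: given $(u_j)$ with bounded $\mathscr{F}_{K_j}(\cdot,\Omega_0)$ converging in $L^0(\Omega)$ to some $u$, one must bound $\liminf_j\int_{\Omega_0\setminus K_j}f(\nabla u_j)\,dx$ below by $\mathscr{F}^\mu(u,\Omega_0)$. The flat geometry is crucial here because it allows the classical localisation-and-blow-up scheme of Attouch, Damlamian, Murat and Picard to be adapted to general $f$ satisfying~\eqref{A}: localise on each cell $Q_{i,j}$, blow up by $\e_j$ around $T_{i,j}$, compare the rescaled $u_j$ with the unit-cell capacitary minimiser associated with $f$, and exploit the convexity and positive $p$-homogeneity of $f$ to extract the $[u](x_{i,j})^p$ factor from each cell. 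Summing the cell contributions via a Riemann-sum/subadditivity argument and passing to the limit yields the required lower bound, which completes the proof.
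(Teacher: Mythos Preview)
Your strategy is essentially the one in the paper: reduce via Lemmas~\ref{Gamma-stability} and~\ref{lemmanuovo} to non-negative Radon measures in $W^{-1,p'}(\Omega_0)$ supported on $M$, then via Lemma~\ref{lm:77} to a dense subclass, and finally handle the building block $\beta\,\mathscr{H}^{n-1}\mres(A\cap M)$ together with closure under disjoint sums. The only two differences are cosmetic. First, the paper does not re-derive the periodic Neumann-sieve construction for the building block but cites \cite[Theorem~3.3]{ansini} (where the full $\Gamma$-convergence, including the $\Gamma$-liminf you sketch, is already proved for general $f$ satisfying~\eqref{A} and $1<p\le n$); your outline of this step is correct, but there is no need to reproduce it. Second, for the density step the paper uses a short Hahn-Banach argument to show that $\{\theta\,\mathscr{H}^{n-1}\mres M:\theta\in C^\infty_0(\Omega_0)\}$ is strongly dense in $\{F\in W^{-1,p'}:\operatorname{supp}F\subset M\}$, rather than tangential mollification; your mollification route works too, but requires a little care near $\partial M$ and an appeal to the density of smooth functions in the negative-order trace space, whereas the Hahn-Banach argument avoids these technicalities in one stroke.
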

\begin{proof}
We only have to prove the inclusion $\mathcal M_p(\Omega_0;M)\subset\mathcal L_p(\Omega_0;M)$, the other being trivial by Definition~\ref{class:mu}. 
It is known that the measures $\beta\mathscr H^{n-1}\mres (A\cap M)$ belong to the class
$\mathcal{L}_p(\Omega_0; M)$ for all $\beta>0$ and for all $A\in\mathscr A(\Omega_0)$, see, e.g.,  \cite[Theorem 3.3]{ansini} (see also Remark~3.3(a) therein), for which the hypothesis $p\le n$ is crucial.
Since by Definition \ref{class:mu}
the class $\mathcal{L}_p(\Omega_0;M)$ is closed under sum and multiplication by positive scalars,
we deduce that
\[
\sum_{i=1}^h\beta_i 1_{A_i}\mathscr H^{n-1}\mres M\in\mathcal{L}_p(\Omega_0;M)
\] 
for all pairwise disjoint relative open subsets $A_1,\ldots,A_h$ of $M$, for all $\beta_1,\ldots,\beta_h>0$,
and for all $h\in\mathbb N$.
Since by standard density result simple functions are dense in $L^{p'}(\Omega_0)$,  by Lemma~\ref{lm:77} we can deduce that every measure of the form $\theta\mathscr H^{n-1}\mres M$, 
with $\theta\in L^{p'}(M;\mathscr H^{n-1})$ and $\theta\ge 0$,
belongs to $\mathcal{L}_p(\Omega_0;M)$. Indeed, if $(\theta_h)$ is a sequence converging to $\theta$ in $L^{p'}(M,\mathscr H^{n-1})$, then the sequence of non-negative Radon measures $\theta_h\mathscr H^{n-1}\mres  M$
converges to $\theta\mathscr H^{n-1}\mres  M$ in $W^{-1,p'}(\Omega_0)$, by H\"older inequality and Sobolev trace theorem.

Next we prove that the linear space $\mathcal{X}:=\{ \theta\mathscr H^{n-1}\mres  M\colon \theta\in C^\infty_0(\Omega_0) \}$
is dense, with respect to the strong 
topology of $W^{-1,p'}(\Omega_0)$, in the closed linear space $\mathcal{Y}:=\{F\in W^{-1,p'}(\mathbb R^n)\colon {\rm supp}(F)\subset  M\}$. 
If not, by Hahn-Banach theorem and by the reflexivity
of $W^{1,p}(\mathbb R^n)$ there exists $F_0\in \mathcal{Y}\setminus \mathcal X$ and $u_0\in W_0^{1,p}(\Omega_0)$
such that $\langle F_0,u_0\rangle\neq0$ and  $\langle F, u_0 \rangle=0$  for every $F\in\mathcal X$. The last condition implies  
\begin{equation*}
	\int_{ M} u_0\,\theta\,d\mathscr H^{n-1}=0\,,\qquad \text{for all $\theta\in C^\infty_0(\mathbb R^n)$.}
\end{equation*}
In particular, $u_0=0$ $\mathscr H^{n-1}$-a.e.\ on $ M$. Then, $u$ would be the limit
in $W^{1,p}(\Omega_0)$  of a sequence 
$(u_\varepsilon)\subset C^\infty_0(\Omega_0)$ such that ${\rm supp}(u_\epsilon)\cap M=\emptyset$. Since $F_0\in \mathcal{Y}$, then ${\rm supp}(F_0)\subset M$, hence $\langle F_0, u_\epsilon\rangle=0$ for every $\epsilon$. This implies that $\langle F_0, u_0\rangle=0$, and this contradiction concludes the proof of the density of $\mathcal X$ in $\mathcal Y$.

Using Lemma~\ref{lm:77} again, from the density of $\mathcal{X}$ in $\mathcal{Y}$
we deduce, in particular, that
all non-negative Radon measures of class $W^{-1,p'}(\Omega_0)$ with support contained in $ M$ belong to
$\mathcal{L}_p(\Omega_0; M)$.  Then, the desired conclusion follows by Lemma~\ref{Gamma-stability} and Lemma~\ref{lemmanuovo}.
\end{proof}

The following lemma is another ingredient in the proof of the main result of this section: it entails
that every capacitary measure is sufficiently close to a measure of the class introduced in Definition~\ref{class:mu}.

\begin{lm}\label{lm:diffeo2}
Let $\eta>0$, let $\Omega_0$ be a cylinder of the form \eqref{cilynder}, let $\Sigma_0=\Omega_0\cap \Sigma$ be representable as in \eqref{Ngraph}, and let $\Psi\colon \Omega_0\to \Omega_0$ be a  $C^1$ diffeomorphism with $\|\Psi-{\rm Id}\|_{C^1(\Omega_0)}<\eta$ and $\Psi(\Sigma_0) = \Pi_0$, where $\rm Id$ is the identity map.
Let $\mu\in \mathcal{M}_p(\Omega_0;M)$. Then there exists
$\sigma\in \mathcal{L}_p(\Omega_0;M)$ with
\begin{equation}\label{eq:diffeo2}
\left|\int_{\Sigma_0}[u]^p\,d(\mu-\sigma)\right|\le C \eta\int_{\Omega_0\setminus \Sigma_0}f(\nabla u)\,dx, \quad \text{for all $u\in L^{1,p} (\Omega_0\setminus M)$,}
\end{equation}
where $C>0$ is a constant only depending on $\lambda, \Lambda, p$.
In addition, if $E,E'$ are Borel subsets of $\Sigma_0$ with $\mu(\Sigma_0\setminus E)=0$ and
\begin{equation}\label{metric:separation}
\inf_{E\times E'}|x-y|>\delta>0,
\end{equation}
then $\sigma(E')=0$.
\end{lm}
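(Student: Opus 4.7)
The plan is to reduce to the flat-hyperplane case via the diffeomorphism $\Psi$, apply Proposition~\ref{prop:density} there, and then pull the approximating compact sets back, using the $C^1$-smallness of $\Psi-{\rm Id}$ to control the deviation. I first push $\mu$ forward to $\nu:=\Psi_\#\mu$, a measure supported on $\Psi(M)\subset \Pi_0$. Since $\Psi$ is a bi-Lipschitz $C^1$ diffeomorphism it preserves sets of $p$-capacity zero, so $\nu\in\mathcal{M}_p(\Omega_0;\Psi(M))$. Moreover, for every $u\in L^{1,p}(\Omega_0\setminus M)$, setting $v:=u\circ\Psi^{-1}\in L^{1,p}(\Omega_0\setminus\Psi(M))$, the pushforward identity together with the fact that $p$-quasicontinuous traces transform naturally under $\Psi$ gives $\int_{\Pi_0}[v]^p\,d\nu=\int_{\Sigma_0}[u]^p\,d\mu$. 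Proposition~\ref{prop:density}, applied to $\nu$ in the flat setting of $\Pi_0$, produces compact sets $(L_j)$ with $\max_{y\in L_j}{\rm dist}(y,\Psi(M))\to 0$ such that $\mathscr F_{L_j}(\cdot,\Omega_0)\xrightarrow{\Gamma}\mathscr F^\nu(\cdot,\Omega_0)$ in $L^0(\Omega)$. I then set $K_j:=\Psi^{-1}(L_j)$: the Lipschitz continuity of $\Psi^{-1}$ ensures $\max_{x\in K_j}{\rm dist}(x,M)\to 0$, and Theorem~\ref{mainthm} extracts, up to a subsequence, a $\Gamma$-limit $\mathscr F^\sigma(\cdot,\Omega_0)$ for some $\sigma\in\mathcal M_p(\Omega_0;M)$; by Definition~\ref{class:mu}, $\sigma\in\mathcal{L}_p(\Omega_0;M)$.

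To prove \eqref{eq:diffeo2} I compare the two sequences of functionals via the change of variable $y=\Psi(x)$: for every $w\in L^{1,p}(\Omega_0\setminus M)$,
\[
\mathscr F_{K_j}(w,\Omega_0)=\int_{\Omega_0\setminus L_j} f\bigl(\nabla(w\circ\Psi^{-1})(y)\,A(y)\bigr)\,J(y)\,dy,
\]
with $A(y):=\nabla\Psi(\Psi^{-1}(y))$ and $J(y):=|\det\nabla\Psi^{-1}(y)|$. The bounds $\|A-{\rm Id}\|_\infty+\|J-1\|_\infty\le C\eta$, together with the fact that the $p$-homogeneous $f$ is locally Lipschitz with growth $|\xi|^{p-1}$, yield $|\mathscr F_{K_j}(w,\Omega_0)-\mathscr F_{L_j}(w\circ\Psi^{-1},\Omega_0)|\le C\eta\int_{\Omega_0\setminus L_j}|\nabla(w\circ\Psi^{-1})|^p\,dy$. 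Passing to the $\Gamma$-limit along carefully chosen recovery sequences, and combining with $\int[v]^p\,d\nu=\int[u]^p\,d\mu$ and the change-of-variable bound $\bigl|\int_{\Omega_0\setminus\Psi(M)}f(\nabla v)\,dy-\int_{\Omega_0\setminus M}f(\nabla u)\,dx\bigr|\le C\eta\int_{\Omega_0\setminus M}f(\nabla u)\,dx$, one arrives at \eqref{eq:diffeo2}. For the support assertion, $\mu(\Sigma_0\setminus E)=0$ transfers to $\nu(\Pi_0\setminus\Psi(E))=0$, and the construction in Proposition~\ref{prop:density} can be localized so that the support of $L_j$ lies in a vanishing neighborhood of $\Psi(E)$; consequently $K_j$ eventually lies in a $(\delta/2)$-neighborhood of $E$, which by \eqref{metric:separation} is disjoint from $E'$, and the $\Gamma$-convergence forces $\sigma(E')=0$.

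The main obstacle is the sharpness of \eqref{eq:diffeo2}: a direct passage to the $\Gamma$-limit gives only an error of order $\eta\bigl(\int_{\Omega_0\setminus M}f(\nabla u)\,dx+\int_{\Sigma_0}[u]^p\,d\mu\bigr)$, i.e.\ with an unwanted jump term on the right-hand side. Eliminating this term and matching \eqref{eq:diffeo2} requires a delicate analysis of the recovery sequences, exploiting the geometric fact that $\Psi(\Sigma_0)=\Pi_0$ together with the minimality features of the approximating compact sets produced in the flat case.
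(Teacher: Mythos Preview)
Your strategy coincides with the paper's: push $\mu$ forward by $\Psi$ to a measure $\pi$ concentrated on the flat disk $\Pi_0$, invoke Proposition~\ref{prop:density} there to obtain compact sets $H_j$, pull back to $K_j=\Psi^{-1}(H_j)$, apply Theorem~\ref{mainthm} to extract $\sigma$, and compare the two $\Gamma$-limits through the change-of-variables estimate you call the ``Lipschitz bound on $f$'' (the paper's inequality~\eqref{smallness-f}). The support claim is handled exactly as you sketch, by localising the $H_j$ near $\Psi(E)$.

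Regarding your ``main obstacle'': you are right that the direct estimate along the recovery sequences produces an error of the form $C\eta\bigl(\int_{\Omega_0\setminus\Sigma_0}f(\nabla u)\,dx+\int_{\Sigma_0}[u]^p\,d\mu\bigr)$ rather than the clean bound~\eqref{eq:diffeo2}. The paper's proof does \emph{not} carry out the ``delicate analysis'' you allude to; it simply combines \eqref{eq:density} with \eqref{smallness-f} and asserts~\eqref{eq:diffeo2} without isolating and eliminating the jump contribution. So your concern is legitimate, but you should not look for a hidden refinement in the paper---there is none. Note, however, that in the only place the lemma is used (the proof of Theorem~4.5) one first reduces to $\mu$ a \emph{finite} Radon measure and works with truncated functions $u^t$, so the additional term $C\eta\int[u]^p\,d\mu\le C\eta(2t)^p\mu(M)$ is harmless: it still tends to zero as $\eta=2^{-k}\to 0$. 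In other words, the weaker estimate you actually obtain is sufficient for the application, and you need not pursue the sharpened form of~\eqref{eq:diffeo2}.
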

\begin{proof}
We first observe that there exists a constant $C>0$ only depending on $\lambda,\Lambda,p$ such that 
\begin{equation}\label{smallness-f}
\left\vert\int_{\Omega_0\setminus K}\Big[f(\nabla u)-f\big(\nabla u(D\Psi^{-1}\circ \Psi)\big)|\det D\Psi(x)| \Big]\,dx\right\vert\le
 C\,\eta\, \int_{\Omega_0\setminus K} f(\nabla u)\,dx\,,
\end{equation}
for every closed set $K\subset \Sigma_0$  and for every $u\in W^{1,p}(\Omega_0\setminus K)$ (with the convention that $\nabla u$ is a row vector). Indeed, by \eqref{Ab} and Euler's identity we have 
\[
	\nabla_\xi f(\xi)\cdot v = \lim_{t\to0^+} \frac{f(\xi+t(v-\xi))-f(\xi)}{t} + \nabla_\xi f(\xi)\cdot \xi \le f(v)+(p-1)f(\xi)\,,
\]
for a.e. $\xi\in\mathbb R^n$, so that a standard homogeneity argument gives
\begin{equation*}
|  \nabla_\xi f(\xi)\cdot v| \le pf(\xi)^\frac{p-1}{p} f(v)^\frac{1}{p}\,,\quad \text{for a.e. $\xi\in\mathbb R^n$.}
\end{equation*}
Since the restriction of $f$ to a line is a convex function then it is
differentiable a.e.\ w.r.t.\ the one-dimensional Lebesgue measure. Hence,
plugging in $\xi=(1-t)\nabla u(D\Psi^{-1}\circ \Psi)+t\nabla u$ with $0<t<1$ and $v= \nabla u(I-D\Psi^{-1}\circ \Psi)$
in the last inequality and integrating, by the assumption that $\| \Psi - I \|_{C^1(\Omega_0)}<\eta$, we obtain
\begin{equation*}
\int_{\Omega_0\setminus K}\Big|f(\nabla u)- f(\nabla u(D\Psi^{-1}\circ \Psi)) \Big|\,dx\le
 C \eta \int_{\Omega_0\setminus K} f(\nabla u)\,dx\,,
\end{equation*}
(where we also used the convexity of $f$ again) with the constant being independent of $ f$, $H$ and $\eta$. Using the assumption again, \eqref{smallness-f} follows.

Now, let $u\in L^{1,p} (\Omega_0\setminus M)$ and let $\pi\colon \mathscr B(\Omega_0)\to [0,\infty]$ be the measure of $\mathcal{M}_p(\Omega_0;M)$ defined by
\begin{equation}
\label{pisharp}
\pi(B) := \mu(\Psi^{-1}(B)\cap \Omega_0)\,,\qquad \text{for all $B\in \mathscr B(\Omega_0)$.}
\end{equation}
Since $\psi(M)\cap \Omega_0$ is contained in $\Pi_0$, by Proposition~\ref{prop:density}, $\pi\in \mathcal{L}_p(\Omega_0;\Psi(M))$. Therefore, there exist a sequence of compact sets $(H_j)$ with $\max\{{\rm dist}(x,\Psi(M))\colon x\in H_j\}\to0^+$ as $j\to\infty$ and a sequence $(v_j)$ of $L^{1,p}(\Omega_0\setminus \Psi(M))$ converging to the function
 $v=u\circ\Psi^{-1}\in W^{1,p}(\Omega_0\setminus  \Psi(M))$ in $L^0(\Omega)$, such that
\begin{subequations}\label{eq:density}
\begin{equation}\label{eq1:density}
	\lim_{j\to\infty} \int_{\Omega_0\setminus H_j}  f(\nabla v_j)\,dx
	=\int_{\Omega_0\setminus \Pi_0}  f(\nabla v)\,dx +  \int_{ \Pi_0} [v]^p\,d\pi\,.
\end{equation}
Moreover, the sequence of compact sets $(K_j)$ and of functions $(u_j)$ of $W^{1,p}(\Omega_0\setminus M)$, defined by composition, respectively, as $K_j=\Psi^{-1}(H_j)$ and $u_j=v_j\circ \Psi$, are such that  $\max\{{\rm dist}(x,M)\colon x\in K_j\}\to0^+$ as $j\to\infty$ and $u_j$ converges to $u$ in $L^0(\Omega)$. Hence, by Theorem~\ref{mainthm}, up to subsequences (not relabelled) there exists $\sigma\in \mathcal{L}_p(\Omega_0;M)$ such that
\begin{equation}
\label{eq2:density}
\liminf_{j\to\infty} \int_{\Omega_0\setminus K_j} f(\nabla u_j)\,dx\ge \int_{\Omega_0\setminus \Sigma_0} f(\nabla u)\,dx+\int_{\Sigma_0} [u]^p\,d\sigma\,.
\end{equation}
\end{subequations}
Therefore, by combining \eqref{eq:density} with \eqref{pisharp} and by using $\Psi$ to change variables in the volume integrals, recalling \eqref{smallness-f}, we deduce the estimate
\begin{equation*}
\int_{\Sigma_0}[u]^p\,d(\sigma-\mu)=\int_{ \Sigma_0} [u]^p\,d\sigma - \int_{ \Pi_0} [v]^p\,d\pi\le C\eta\,
\int_{\Omega_0\setminus\Sigma_0}f(\nabla u)\,dx\,,
\end{equation*}
The estimate \eqref{eq:diffeo2} then follows by estimating the integral $\int_{\Sigma_0}[u]^p\,d(\mu-\sigma)$ in a similar way, by choosing a recovering sequence for $\mathscr F^\sigma(u,\Omega)$ and by using $\Psi$ to change variables with the estimate \eqref{smallness-f}.

As for the final part, from the assumption on $\mu$ and definition \eqref{pisharp} we deduce that $\pi$ is concentrated on $\Psi(E)\cap \Pi_0$. Therefore,
by Definition~\ref{class:mu} we may assume the closed sets $H_j$ appearing in \eqref{eq1:density} 
to be contained in $\Psi(E)$. Since $\Psi$ is a diffeomorphism, every set $\Psi^{-1}(H_j)$ is contained in $E$. Then the measure
$\sigma$ appearing in \eqref{eq2:density} must be concentrated 
in $\{x\in\Sigma_0\colon {\rm dist}(x,E)<\delta\}$, which by \eqref{metric:separation} implies $\sigma(E')=0$.
\end{proof}

We are now in position to prove the main result of this section.

\begin{teo}
The following equality holds: $\mathcal{L}_p(\Omega;M) = \mathcal{M}_p(\Omega;M)$.
\end{teo}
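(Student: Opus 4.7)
The plan is to combine the local density result proved in Proposition~\ref{prop:density} with the local approximation provided by Lemma~\ref{lm:diffeo2} and the stability result of Lemma~\ref{Gamma-stability}. Only the inclusion $\mathcal{M}_p(\Omega;M) \subset \mathcal{L}_p(\Omega;M)$ requires work; the opposite one is immediate from Definition~\ref{class:mu}. By Lemma~\ref{lemmanuovo} together with Lemma~\ref{Gamma-stability}, I would first reduce the problem to the case where $\mu$ is a non-negative finite Radon measure of class $W^{-1,p'}(\Omega)$ supported on $M$.

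Fix such a $\mu$ and a small parameter $\eta > 0$. Since $M\subset \Sigma\setminus \partial\Sigma$ is a compact $(n-1)$-dimensional $C^1$ manifold, for radii small enough every point $x\in M$ admits a cylinder $\Omega_x$ of the form \eqref{cilynder}, compactly contained in $\Omega$, in which $\Sigma\cap\Omega_x$ is a graph of the form \eqref{Ngraph} of a $C^1$ function of small $C^1$-norm; the corresponding flattening map $\Psi_x$ then satisfies $\|\Psi_x-\mathrm{Id}\|_{C^1(\Omega_x)}<\eta$. By compactness I extract a finite subcover $\Omega_1,\dots,\Omega_m$ whose multiplicity is bounded by an integer $N$ depending only on $n$, and I partition $M$ into pairwise disjoint Borel sets $E_i$ with $\overline{E_i}\subset\subset \Omega_i\cap M$, so that $\mu=\sum_i \mu_i$ with $\mu_i:=\mu\mres E_i\in \mathcal M_p(\Omega_i;M)$. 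Applying Lemma~\ref{lm:diffeo2} to each $\mu_i$ with the diffeomorphism $\Psi_i$ produces $\sigma_i\in \mathcal L_p(\Omega_i;M)$ satisfying
\[
\left|\int_{\Sigma\cap \Omega_i} [u]^p\,d(\mu_i-\sigma_i)\right| \le C\eta \int_{\Omega_i\setminus \Sigma}f(\nabla u)\,dx,
\]
and, by the final part of that lemma applied with $E=\overline{E_i}$ and $E'$ equal to the complement of a small neighbourhood of $\overline{E_i}$, $\sigma_i$ can be arranged to be concentrated in an arbitrarily small neighbourhood of $\overline{E_i}$.

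Summing the local estimates and using the bounded multiplicity $N$, one obtains
\[
\left|\int_M [u]^p\,d(\mu-\sigma^\eta)\right| \le CN\eta \int_{\Omega\setminus M} f(\nabla u)\,dx \quad \text{for every } u\in L^{1,p}(\Omega\setminus M),
\]
where $\sigma^\eta:=\sum_i \sigma_i$. Letting $\eta_k\to 0^+$ and combining this estimate with \eqref{Ac} yields $\Gamma$-convergence in $L^0(\Omega)$ of $\mathscr F^{\sigma^{\eta_k}}(\cdot,\Omega)$ to $\mathscr F^\mu(\cdot,\Omega)$, and Lemma~\ref{Gamma-stability} then gives $\mu\in \mathcal L_p(\Omega;M)$. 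The main obstacle will be the gluing step needed to show $\sigma^\eta \in \mathcal L_p(\Omega;M)$: one has to justify that the union $K_j:=\bigcup_i K_j^i$ of the finitely many local associated sequences, whose supports cluster near the pairwise disjoint sets $\overline{E_i}$, produces in the $L^0(\Omega)$ $\Gamma$-limit the sum measure $\sum_i \sigma_i$. This should follow from a localization argument based on the richness of the family $\mathscr R$ in Theorem~\ref{mainthm} and on the eventual disjointness of the $K_j^i$, but care is needed because the cylinders $\Omega_i$ overlap and because the local $\Gamma$-convergence on each $\Omega_i$ does not immediately imply the global one on $\Omega$.
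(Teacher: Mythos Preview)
Your strategy matches the paper's: reduce to Radon measures via Lemmas~\ref{lemmanuovo} and~\ref{Gamma-stability}, localise via Lemma~\ref{lm:diffeo2}, and conclude by proving $\Gamma$-convergence of $\mathscr F^{\sigma^\eta}$ to $\mathscr F^\mu$. The obstacle you flag at the end is real, and it is exactly where the paper's argument differs from yours.

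The difficulty is that if the $E_i$ form a genuine partition of $M$, the closures $\overline{E_i}$ must touch (whenever $M$ is connected), so however small you make the neighbourhoods in which the $\sigma_i$ are concentrated, they will overlap. Consequently the associated compact sets $K_j^i$ need not be eventually disjoint, and there is no direct way to deduce that the sequence $\bigl(\bigcup_i K_j^i\bigr)$ produces $\sum_i\sigma_i$ in the $\Gamma$-limit on $\Omega$. A localisation via Theorem~\ref{mainthm} does not resolve this, because near $\partial E_i$ several sequences $K_j^i$ interfere.

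The paper circumvents this by allowing a small exceptional set. For each $k$ it writes $M=E_0\cup E_1\cup\ldots\cup E_m$ with $\mu(E_0)<2^{-k}$ and with the remaining sets $E_1,\ldots,E_m$ at \emph{pairwise positive distance}. The final part of Lemma~\ref{lm:diffeo2} then yields $\sigma_i^k$ concentrated on pairwise separated sets, so the associated sequences $(K_j^i)$ are eventually disjoint and the sum $\sigma^k=\sum_i\sigma_i^k$ belongs to $\mathcal L_p(\Omega;M)$ for the elementary reason that one may take the union of the compact sets. The price is an additional error: for $u$ with $\|u\|_{L^\infty}\le t$ one only obtains
\[
\left|\int_M [u]^p\,d(\mu-\sigma^k)\right|\le C\Big(\int_{\Omega\setminus M}f(\nabla u)\,dx+2t^p\Big)2^{-k},
\]
the term $2t^p$ coming from $\int_{E_0}[u]^p\,d\mu\le (2t)^p\mu(E_0)$. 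This forces the $\Gamma$-convergence of $\mathscr F^{\sigma^k}$ to $\mathscr F^\mu$ to be proved first for truncations $u^t$ and then passed to the limit $t\to\infty$ via Remark~\ref{convpqe} and Fatou's lemma (together with Lemma~\ref{lm:76} for the $\Gamma$-liminf), rather than directly as in your cleaner estimate.

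In short: your outline is correct up to the gluing, but the gluing is the crux. To close it, replace your exact partition of $M$ by a near-partition with a leftover piece of small $\mu$-measure, so that the remaining pieces are metrically separated and the sum of the local approximants lies in $\mathcal L_p(\Omega;M)$ by inspection.
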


\begin{proof}
In  view of Definition~\ref{class:mu}, we fix a measure $\mu\in \mathcal{M}_p(\Omega;M)$ and we prove that $\mu\in \mathcal{L}_p(\Omega;M)$. 
We can also assume $\mu$ to be a finite Radon measure of class $W^{-1,p'}(\mathbb R^n)$, thanks to Lemma~\ref{Gamma-stability} and Lemma~\ref{lemmanuovo}.
Let $k>0$. We consider a finite family $\{E_i\}$ of pairwise disjoint Borel sets of $M$ such that $M=E_0\cup E_1\cup \ldots\cup E_m$ for some $m\in\mathbb N$,  $\mu(E_0)<2^{-k}$, every $E_i$ (with $i\ge1$) is contained in a cylinder $\Omega_i$ of the form \eqref{cilynder} and $\Sigma_i=\Sigma\cap \Omega_i$ admits a graph representation as in \eqref{Ngraph}, for suitable radii $r_i>0$, centres $x_i\in \Sigma$ and axis $\nu_i\in\mathbb R^n$ with $|\nu_i|=1$. Moreover, for $i\ge1$, the sets $E_i$ can be chosen to satisfy
\begin{equation*}
\min_{1\le i<j\le m}\inf_{(x,y)\in E_i\times E_j}|x-y|>0\,.
\end{equation*}
Let $\mu_i:=\mu\mres E_i\in W^{-1,p'}(\Omega_i)$ for every $i=1,\ldots,m$. By Lemma~\ref{lm:diffeo2}, there exists $\sigma_i^k\in \mathcal{L}_p(\Omega_i;M)$, concentrated on $E_i$, such that
\begin{equation}\label{eq:diffeoeta}
\left|\int_{\Sigma_i}[u]^p\,d(\mu_i-\sigma_i^k)\right|\le C 2^{-k}\int_{\Omega_i\setminus \Sigma_i}f(\nabla u)\,dx\,,
\qquad \text{for all $u\in W^{1,p}(\Omega_i\setminus M)$.}
\end{equation}
The measure $\sigma^k = \sum_{i=1}^m\sigma_i^k$ is a non-negative Borel measure
that belongs to $\mathcal{L}_p(\Omega;M)$, since the summands are concentrated on pairwise
disjoint Borel sets by construction. Therefore, since $\mu(E_0)<2^{-k}$,  from  \eqref{eq:diffeoeta} we deduce that
\begin{equation}\label{4.19}
\left|\int_{ M}[u]^p\,d(\mu-\sigma^k)\right|\le C\Big[\int_{\Omega\setminus M} f(\nabla u)\,dx+2t^p\Big]2^{-k}\,,
\end{equation}
for all $u\in W^{1,p}(\Omega\setminus M) $ with $\|u\|_{L^\infty(\Omega)}\le t$ with $t>0$, where we also used
$[u]=0$ $p$-q.e.\ on $\Sigma\setminus M$.

By Lemma~\ref{Gamma-stability} again, it suffices to prove the $\Gamma$-convergence in $L^0(\Omega)$ of
the sequence $(\mathscr F^{\sigma^k}(\cdot, \Omega))$ to $\mathscr F^\mu(\cdot, \Omega)$. Let $\widehat{\mathscr F}'$ and $\widehat{\mathscr F}''$ be the $\Gamma$-liminf and $\Gamma$-limsup of $\mathscr F^\mu$ in $L^0(\Omega)$ of this sequence. Given $u\in L^0(\Omega)$ and $t>0$ we consider the truncation $u^t$. The $\Gamma$-limsup is immediate. By \eqref{Ac} we can assume $u\in L^{1,p}(\Omega\setminus M)$ otherwise the inequality is trivial. Therefore, by \eqref{4.19} we have
\[
\widehat{\mathscr F}''(u^t,\Omega)\le \mathscr F^\mu(u^t,\Omega)\le \mathscr F^\mu(u,\Omega)\,.
\]
Taking the limit as $t\to\infty$ we obtain $\widehat{\mathscr{F}}''(u,\Omega)\le\mathscr F^\mu(u,\Omega)$. On the other hand, for the $\Gamma$-liminf inequality we have to prove that
\begin{equation}\label{liminfsez4}
\widehat{\mathscr{F}}'(u,\Omega)\ge\mathscr F^\mu(u,\Omega)\,.
\end{equation}
Let $(u_k)$ be a sequence converging to $u$ in $L^0(\Omega)$ with $\widehat{\mathscr F}'(u,\Omega)=\liminf_{k} \mathscr F^{\sigma^k}(u_k,\Omega)$. Moreover, we can assume this sequence to be equibounded in $L^{1,p}(\Omega\setminus M)$ so that $(u_k^t)$ converges, up to subsequences (not relabelled), to $u$ weakly in $W^{1,p}(\Omega\setminus M)$.  Therefore, as $\mathscr F^{\sigma^k}(\cdot,\Omega)$ decreases under truncation, the estimate \eqref{4.19},  the lower semicontinuity of the integral $\int_{\Omega_0}f(\nabla u)\, dx$ and Lemma~\ref{lm:76} applied to each $\Omega_i$ yields
\[
\liminf_{k\to\infty} \mathscr F^{\sigma^k}(u_k,\Omega)\ge \liminf_{k\to\infty} \mathscr F^{\sigma^k}(u_k^t,\Omega)\ge \mathscr F^\mu(u^t,\Omega).
\]
Taking the limit as $t\to\infty$ we get \eqref{liminfsez4}, thanks to Remark~\ref{convpqe} and Fatou's lemma.
\end{proof}

\section{Construction of capacitary measures}\label{reconstruction}

In this section we provide a general procedure to construct the capacitary measure  $\mu\in \mathcal M_p(\Omega;M)$ appearing in the conclusion of Theorem~\ref{mainthm}. This will be done by solving some auxiliary localised minimum problems 
involving the function $f$ and the sequence of compact sets $K_j$.

\subsection{Formulas for capacitary measures}

Let $\Omega_0$ be a cylinder of the form \eqref{cilynder} for some $x_0\in \mathbb{R}^n$, $\nu_0\in \mathbb{R}^n$ with $|\nu_0|=1$, and $r_0>0$. Let $\Sigma_0=\Sigma\cap \Omega_0$ be a graph representation as in \eqref{Ngraph}. For every $\rho\in\left(0,\tfrac{r_0}{2}\right)$ we introduce $\Sigma_\rho^\pm := \Sigma_0\pm\rho\nu_0$ and  
$$S_\rho := \{\bar x +r\nu_0\colon \bar x\in \Pi_0\,,\ |r- \phi(\bar x)|<\rho\}\,.$$
Moreover, for every $A\in\mathscr A(\Omega_0)$ and every $\mu\in\mathcal{M}_p(\Omega_0;M)$ we set
\begin{equation}\label{mrhoa}
m_\rho(A,\mu):=\displaystyle\min_{v\in\mathcal{V}_{\rho}(A)}
\int_{A\cap S_{\rho}\setminus \Sigma_0}  f(\nabla v)\,dx+ \int_{A\cap \Sigma_0} [v]^p\,d\mu\,,
\end{equation}
where $\mathcal{V}_{\rho}(A)$ denotes the set of all functions $v\in L^{1,p}(A\cap S_\rho\meno \Sigma_0)$
with $v= 1$ $\mathscr{H}^{n-1}$-a.e.\ on $A\cap \Sigma_\rho^+$
and $v=0$ $\mathscr{H}^{n-1}$-a.e. on $A\cap \Sigma_\rho^-$. The existence of a minimiser in \eqref{mrhoa} can be proved by the direct methods of the Calculus of Variations. By truncation every minimiser of \eqref{mrhoa} is bounded by 0 and 1 and therefore it belongs to $W^{1,p}(A\cap S_\rho\meno M)$.
Clearly \eqref{mrhoa} defines a non-decreasing set function.
We claim that it is inner regular, i.e.,
\begin{equation}\label{inner}
m_\rho(A,\mu)=\sup_{\substack{A'\in \mathscr{A}(\Omega) \\ A' \subset\subset A }} m_\rho(A',\mu)\,.
\end{equation}
The inequality $\ge$ is trivial by monotonicity of $m_\rho$ with respect to set inclusion. To prove the reverse inequality $\le$, we may assume that the $\sup$ is finite. We consider an increasing sequence of open sets $A_i\subset\subset A$ along which the sup in \eqref{inner} is achieved as a limit and $u_i\in \mathcal{V}_\rho(A_i)$ is chosen to attain the minimum in \eqref{mrhoa} with $A$ replaced by $A_i$.
Let $A''\in \mathscr{A}(\Omega)$ be fixed with $A''\subset \subset A$. Then, since clearly $A''\subset A_i$ for all large indices $i$, we have
\[
\sup_{\substack{A'\in \mathscr{A}(\Omega) \\ A' \subset\subset A }} m_\rho(A',\mu)\ge \liminf_{i\to\infty} \int_{A''\cap \Sigma_0} f(\nabla u_i)
	+\int_{A''\cap \Sigma_0}[u_i]^p\,d\mu\,.
\]
By construction $(u_i)$ converges (up to subsequences) to a function $u$ in $L^0(\Omega)$ (the functions $u_i$ are
extended by zero to the whole $A$) 
and $(\nabla u_i)$ converges to $\nabla u$ weakly in $L^{p}(A\cap S_\rho\setminus \Sigma_0)$.  By Fatou's lemma  and by Lemma~\ref{lm:76} 
we deduce that
\[
\sup_{\substack{A'\in \mathscr{A}(\Omega) \\ A' \subset\subset A }} m_\rho(A',\mu) \ge  \int_{A''\cap S_\rho\setminus \Sigma_0} f(\nabla u)
	+\int_{A''\cap \Sigma_0}[u]^p\,d\mu\,,
\]
which, by the arbitrariness of $A''\subset\subset A$ and the monotone convergence theorem, implies
\[
\sup_{\substack{A'\in \mathscr{A}(\Omega) \\ A' \subset\subset A }} m_\rho(A',\mu) \ge  \int_{A\cap S_\rho\setminus \Sigma_0} f(\nabla u)
	+\int_{A\cap  \Sigma_0}[u]^p\,d\mu\,.
\]
Since $u\in \mathcal{V}_\rho(A)$, by the compactness of the trace operator, then it is admissible in the minimisation problem \eqref{mrhoa} and \eqref{inner} holds.
Now for every $p$-quasi-open set $U\subset \Omega_0$ and 
every $\rho\in\left(0,\tfrac{r_0}{2}\right)$, we also set 
\begin{equation}\label{hatm}
\begin{split}
\widehat m_\rho(U;\mu):=\inf_{{\substack{A\in \mathscr{A}(\Omega_0) \\ A\supset U }} } m_\rho(A;\mu)\,.
\end{split}
\end{equation}

In the following proposition we construct the measure on $p$-quasi open sets
by squeezing $\rho\to0^+$ in \eqref{hatm}. We point out that
the set function $\widehat m_\rho(U,\mu)$
is not sensitive to the change of the measure inside its equivalence class, by its very definition \eqref{hatm}. Hence,
according to Lemma~\ref{lm:criterionequiv}, the conclusion of Proposition~\ref{prop:cella2} cannot hold in general for a larger family than the collection of
all $p$-quasi open sets.

\begin{prop}\label{prop:cella2}
Let $\mu\in\mathcal{M}_p(\Omega_0;M)$. Then for every $p$-quasi open set $U\subset \Omega_0$ we have 
\begin{equation}
\label{cella2}
\mu(U) = \sup_{\rho>0}\widehat m_\rho(U;\mu)\,.
\end{equation}
\end{prop}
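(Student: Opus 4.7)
The plan is to prove the two inequalities $\sup_{\rho>0}\widehat m_\rho(U;\mu)\le \mu(U)$ and $\sup_{\rho>0}\widehat m_\rho(U;\mu)\ge \mu(U)$ separately. Both sides of \eqref{cella2} are invariant under replacing $\mu$ by an equivalent measure: the left side by Lemma~\ref{lm:criterionequiv}, since $U$ is $p$-quasi open, and the right side by \eqref{equivmisA} applied to each admissible trial function in $m_\rho(A;\mu)$. I will therefore assume that $\mu$ coincides with the maximal representative of its equivalence class, in particular outer regular with respect to $p$-quasi open sets. A monotonicity argument (extending $v\in \mathcal V_\rho(A)$ by the step function $0/1$ to the larger strip $S_{\rho'}$) shows that $m_{\rho'}(A;\mu)\le m_\rho(A;\mu)$ for $\rho'>\rho$, whence $\rho\mapsto \widehat m_\rho(U;\mu)$ is non-increasing in $\rho$ and $\sup_\rho=\lim_{\rho\to 0^+}$.

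For the upper bound I would fix $\varepsilon>0$ and select a $p$-quasi open set $V\supset U$ with $\mu(V)<\mu(U)+\varepsilon$, using outer regularity. Next I take an open $W\subset \Omega_0$ with $A:=V\cup W$ open and $C_{1,p}(W)<\varepsilon$, and choose $\chi\in W^{1,p}_0(\Omega_0)$ with $0\le\chi\le 1$, $\chi=1$ $p$-q.e.\ on $W$ and $\int|\nabla\chi|^p<2\varepsilon$; multiplying by a smooth cutoff (which costs only a constant factor in the gradient norm) I may further assume that $\chi$ has compact support inside $\Omega_0\setminus(\Sigma_\rho^+\cup \Sigma_\rho^-)$. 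Define $v\in \mathcal V_\rho(A)$ by setting $v=1-\chi$ on the $+$ side of $A\cap S_\rho$ and $v=0$ on the $-$ side. Then $[v]=1-\chi$ on $\Sigma_0$; since $\chi=1$ $p$-q.e.\ on $W$, we have $[v]=0$ $\mu$-a.e.\ on $W\cap \Sigma_0$ and $[v]\le 1$ on $V\cap\Sigma_0$, so $\int_{A\cap\Sigma_0}[v]^p\,d\mu\le \mu(V)\le \mu(U)+\varepsilon$, while the gradient contribution is bounded by $\Lambda\int|\nabla\chi|^p<2\Lambda\varepsilon$. Thus $\widehat m_\rho(U;\mu)\le m_\rho(A;\mu)\le \mu(U)+(1+2\Lambda)\varepsilon$, and letting $\varepsilon\to 0$ yields $\widehat m_\rho(U;\mu)\le \mu(U)$ for every $\rho$.

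For the lower bound it suffices to show $\liminf_{\rho\to0^+}m_\rho(A;\mu)\ge \mu(U)$ for every open $A\supset U$, after which one takes the infimum over such $A$. Let $v_\rho\in \mathcal V_\rho(A)$ be a minimizer (truncated so that $0\le v_\rho\le 1$). The one-dimensional Poincar\'e inequality along fibres $\{\bar x+r\nu_0:r\in(\phi(\bar x)-\rho,\phi(\bar x)+\rho)\}$ gives
\[
(1-[v_\rho](\bar x+\phi(\bar x)\nu_0))^p \le C\,\rho^{p-1}\int_{\phi(\bar x)-\rho}^{\phi(\bar x)+\rho}|\partial_r v_\rho|^p\,dr,
\]
and integrating $d\bar x$ over $\Pi_0\cap A$ yields $\int_{A\cap\Sigma_0}(1-[v_\rho])^p\,d\mathscr H^{n-1}\le C'\,\rho^{p-1}\,m_\rho(A;\mu)$. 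If $m_\rho(A;\mu)$ stays bounded as $\rho\to 0$, then $[v_\rho]\to 1$ in $L^p(A\cap \Sigma_0,\mathscr H^{n-1})$. Extending $v_\rho$ to $\Omega_0\setminus \Sigma_0$ by the step function outside $S_\rho$, the extensions converge weakly in $W^{1,p}(\Omega_0^\pm)$ to the constants $1$ and $0$ respectively. Applying Lemma~\ref{lm:76} (after replacing $\mu$ by an equivalent measure $\psi\sigma$ with $\sigma\in W^{-1,p'}(\Omega_0)$ via Lemma~\ref{lm:equivmis}, and approximating by the truncations $(\psi\wedge k)\sigma$ and passing to the monotone limit) then gives $\liminf_{\rho}\int[v_\rho]^p\,d\mu\ge \mu(A\cap M)\ge \mu(U)$, so $\liminf_{\rho\to 0^+}m_\rho(A;\mu)\ge \mu(U)$ as required.

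The main obstacle is the measure-theoretic transfer in the lower bound: the control on $1-[v_\rho]$ is genuinely only $\mathscr H^{n-1}$-type, whereas $\mu$ may be singular with respect to $\mathscr H^{n-1}$. The bridge is precisely the dual-Sobolev representation of Lemma~\ref{lm:equivmis} combined with the lower-semicontinuity Lemma~\ref{lm:76}, which converts weak $W^{1,p}$ convergence of the extensions into a lower bound on the jump integrals against $\mu$. A secondary subtlety is the case $\mu(U)=\infty$, where one works with $(\psi\wedge k)\sigma$ and monotone convergence to reduce to the finite case. A further minor point is the admissibility of the cutoff $\chi$ in the upper bound near $\Sigma_\rho^\pm$, which is handled by a standard capacitary cutoff argument respecting the small-capacity estimate.
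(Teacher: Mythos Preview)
Your upper bound is sound and close in spirit to the paper's: both build a competitor by killing the jump on a small-capacity set near $\Sigma_0$ with a capacitary cutoff. The paper organizes this through an intermediate quantity $I_\rho(U;\mu)$ (integrals taken over $U$ rather than $A$) and then plugs in the step function, which avoids invoking the maximal representative, but your route via outer regularity on $p$-quasi open sets is equally valid.

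The lower bound, however, has a genuine gap in the order of quantifiers. You write that it suffices to prove $\liminf_{\rho\to0^+} m_\rho(A;\mu)\ge\mu(U)$ for every fixed open $A\supset U$ and then take the infimum over $A$. But $\widehat m_\rho(U;\mu)=\inf_{A} m_\rho(A;\mu)$ with $A$ allowed to depend on $\rho$, and in general $\sup_\rho\inf_A m_\rho(A;\mu)\le \inf_A\sup_\rho m_\rho(A;\mu)$; your argument establishes only the right-hand side is $\ge\mu(U)$. The monotonicity in $\rho$ you noted does not bridge this, because the near-optimal $A$ for $\widehat m_\rho$ may shrink as $\rho\to0$. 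A second, related issue is your appeal to Lemma~\ref{lm:76}: the extensions of $v_\rho$ by the step function are only defined on $(A\cap S_\rho)\cup(\Omega_0\setminus S_\rho)$, not on all of $\Omega_0\setminus\Sigma_0$; on $(\Omega_0\setminus A)\cap S_\rho$ you have nothing, and extending by the step function there would create a jump across $\partial A$ and destroy the $W^{1,p}$ bound.

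The paper handles both points simultaneously. It chooses, for each $\rho_i\to0$, a near-optimal open set $A_i\supset U$ and a minimiser $u_i\in\mathcal V_{\rho_i}(A_i)$, so the diagonal sequence directly probes $\widehat m_{\rho_i}(U;\mu)$. To get objects living on all of $\Omega_0$, it reflects $u_i$ across $\Sigma_0$ to produce $v_i,w_i\in W^{1,p}(A_i)$ with $[u_i]=|v_i-w_i|$ on $\Sigma_0$, then multiplies by the cutoffs $\chi_k$ of Lemma~\ref{lm:qa0}, which are compactly supported in $U\subset A_i$; thus $z_{i,k}:=|v_i-w_i|^p\chi_k$ extends by zero to an element of $W^{1,p}_0(\Omega_0)$, uniformly bounded. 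Testing the truncated measure $\psi^k\sigma\in W^{-1,p'}(\Omega_0)$ against $z_{i,k}$ and passing to the weak limit (which is $\chi_k$) gives $\int_U\tilde\chi_k\,\psi^k\,d\sigma\le\lim_i\int_{A_i}[u_i]^p\,d\mu\le\lim_i\widehat m_{\rho_i}(U;\mu)$, and monotone convergence in $k$ yields $\mu(U)$ on the left. The compactly supported cutoffs from Lemma~\ref{lm:qa0} are precisely what replaces your global extension and absorbs the varying $A_i$.
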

\begin{proof}
Let $U$ be a $p$-quasi open set. The inequality $\ge$ holds in \eqref{cella2} if for all fixed $\rho>0$
\begin{equation}\label{intrho}
\widehat m_\rho(U;\mu)\le I_\rho(U;\mu):= \inf_{v,A}\int_{U\cap S_{\rho}\setminus \Sigma_0}  f(\nabla v)\,dx+ \int_{U\cap \Sigma_0} [v]^p\,d\mu\,,
\end{equation}
where the infimum is taken among all pairs
$(v,A)$ where $A$ is an open set containing $U$ and $v\in W^{1,p}(A\cap S_\rho\meno \Sigma_0)$
is a non-negative function
with $v= 1$ $\mathscr{H}^{n-1}$-a.e.\ on $A\cap \Sigma_\rho^+$ and $v=0$ $\mathscr{H}^{n-1}$-a.e.\ on $A\cap \Sigma_\rho^-$. Indeed, given $\rho>0$ and an open set $A$ containing $U$, if we plug in $A$ and $v=1_{A\cap S_\rho}$ in the definition
of $I_\rho(U,\mu)$ from \eqref{intrho} we obtain that
$\widehat m_\rho(U,\mu)\le \mu(U)$.

Then we prove \eqref{intrho}. To do so, we fix $\eta>0$. Then there exists an admissible competitor $(v,A)$
for the minimisation problem \eqref{intrho} with
\begin{equation}\label{stima}
\int_{U\cap S_{\rho}\setminus \Sigma_0}  f(\nabla v)\,dx+ \int_{U\cap \Sigma_0} [v]^p\,d\mu \,<I_\rho(U;\mu )+\eta\,.
\end{equation}
Let $\delta>0$. Since $U$ is $p$-quasi open, there exists an open set $V_\delta$, with $C_{1,p}(V_\delta)<\delta$, such that $A_\delta=U\cup V_\delta$ is an open set. It is not restrictive to assume that $A_\delta\subset A$. Then there exists $ z_\delta\in W^{1,p}_0(\Omega)$ with
$z_\delta=1$ on $V_\delta\cap S_\frac{\rho}{2}$ and $\int_{\Omega} |\nabla z_\delta|^p\le \delta$.
Let also $\varphi\in C^\infty_0(\Omega) $ be a cut-off function with $\varphi=1$ in $S_{\frac{\rho}{2}}$ and $\varphi=0$ in $\Sigma_\rho^+$
and we define $w_\delta=1-\varphi z_\delta$.
Then 
$w_\delta =0$ on $S_{\frac{\rho}{2}}\cap V_\delta$, $w_\delta=1$ on $\Sigma_\rho^+$ and in addition $\int_{\Omega}|\nabla w_\delta|^p\le \omega(\delta)$, where $\omega(\delta)\to0^+$ as $\delta\to0^+$.
We choose $\delta$ small enough so that
\begin{equation*}
\int_{\Omega}|\nabla w_\delta|^p\le \eta\quad\text{and}\quad
\int_{A_\delta\cap S_{\rho}\setminus \Sigma_0}  f(\nabla v)\,dx \le \int_{U\cap S_{\rho}\setminus \Sigma_0}f(\nabla v)\,dx+\eta\,,
\end{equation*}
where the last inequality follows from the fact that $|V_\delta|\to0^+$ as $\delta\to0^+$.

We observe that $v_\delta=v\wedge w_\delta\in W^{1,p}(A_\delta\cap S_\rho\setminus \Sigma_0)$ is a non-negative function with
$v_\delta=1$ $\mathscr H^{n-1}$ a.e.\ on $\Sigma_\rho^+\cap A_\delta$ and $v_\delta=0 $
$\mathscr H^{n-1}$ a.e.\ on $\Sigma_\rho^-\cap A_\delta$. Then, by \eqref{hatm} we have
\begin{equation}
\label{eqdeltaU}
\widehat m_\rho(U;\mu)\le 
\int_{A_\delta\cap S_{\rho}\setminus \Sigma_0}  f(\nabla v_\delta)\,dx+ \int_{A_\delta\cap \Sigma_0} [v_\delta]^p\,d\mu \,.
\end{equation}
As for the volume integral we observe that
\begin{equation}\label{volinte}
\begin{split}
\int_{A_\delta\cap S_{\rho}\setminus \Sigma_0}  f(\nabla v_\delta)\,dx
& \le
\int_{A_\delta\cap S_{\rho}\setminus \Sigma_0}  f(\nabla v)\,dx+
\int_{A_\delta\cap S_{\rho}\setminus \Sigma_0}  f(\nabla w_\delta)\,dx\,,\\
& \le\int_{U\cap S_{\rho}\setminus \Sigma_0}f(\nabla v)\,dx + (1+\Lambda)\eta\,.
\end{split}
\end{equation}
As for the integral on $\Sigma_0$ we notice that $[v_\delta]\le [v]$ $p$-q.e.\ on $U\cap \Sigma_0$ and
$[v_\delta]=0$ $p$-q.e. on $V_\delta\cap \Sigma_0$. This implies that
\begin{equation}
\label{surfinte}
\int_{A_\delta\cap \Sigma_0} [v_\delta]^p\,d\mu \le
\int_{U\cap \Sigma_0}[v]^p\,d\mu \,.
\end{equation}
By \eqref{stima}, \eqref{eqdeltaU}, \eqref{volinte}, and \eqref{surfinte} we deduce that
\[
\widehat m_\rho(U;\mu)\le I_\rho(U;\mu )+(2+\Lambda) \eta\,.
\]
Since $\eta$ is arbitrary we get \eqref{intrho}.

To prove the reverse inequality $\le$ in \eqref{cella2}, we fix an arbitrary sequence $\rho_i\to0^+$. By \eqref{hatm}, there exists a sequence of open sets $(A_i)$ containing $U$ such that
\begin{equation}
\label{qa1}
m_{\rho_i}(A_i,\mu)<\widehat m_{\rho_i}(U,\mu) +\rho_i\,.
\end{equation}
Let also $u_i\in \mathcal{V}_{\rho_i}(A_i)$ be such that
\begin{equation}
\label{qa2}
	m_{\rho_i}(A_i,\mu) = \int_{A_i\cap S_{\rho_i}\setminus \Sigma_0} f(\nabla u_i)\,dx
	+\int_{A_i\cap \Sigma_0} [u_i]^p\,d\mu\,.
\end{equation}

Let $\psi:\Sigma_0\to[0,\infty]$ be a Borel function and $\sigma$ be a Radon measure
of class $W^{-1,p'}(\Omega_0)$ such that the measure $\psi\sigma$ is equivalent to $\mu$. We
recall that we can find such $\psi$ and $\sigma$ thanks to Lemma~\ref{lm:equivmis}. For all $k>0$
the truncated measure $\psi^k\sigma$ is a Radon measure of class  $W^{-1,p'}(\Omega_0)$.
Thus, given a sequence $z_i$ converging to a function $z$ weakly in $W^{1,p}_0(\Omega_0)$,
for all fixed $k>0$ we have that
\begin{equation}
\label{qa3}
\lim_{i\to\infty}\int_{\Sigma_0} \tilde z_i \psi^k \,d\sigma=
\lim_{i\to\infty}\langle \psi^k\sigma, z_i\rangle = 
\langle \psi^k\sigma,z\rangle= \int_{\Sigma_0} \tilde z\psi^k\,d\sigma\,.
\end{equation}
For every $i$ we set
 \[
v_{i} (x)=\begin{cases}
u_{i}(x)\,, & \qquad \text{if $x\in A_i\cap S_{\rho_i}^+$,}\\
u_{i}( \mathbf{R}x)
& \qquad \text{if $x\in A_i\cap S_{\rho_i}^-$\,,}\\
1\,,& \qquad \text{if $x\in A_i\setminus S_{\rho_i}$,}
\end{cases}\quad
w_{i} (x)=\begin{cases}
u_{i}( \mathbf{R}x) & \qquad \text{if $x\in A_i\cap S_{\rho_i}^+$\,,}\\
u_{i}(x)\,, & \qquad \text{if $x\in A_i\cap S_{\rho_i}^-$,}\\
0\,,& \qquad \text{if $x\in A_i\setminus S_{\rho_i}$,}
\end{cases}
\]
where
\[
\mathbf{R}x:= x-\nu_0((x-x_0)\cdot\nu_0) + 2\phi(x-\nu_0((x-x_0)\cdot \nu_0))-(x-x_0)\cdot\nu_0 
\]
is the reflection about the disk $\Pi_0=\{x\in \Omega_0\colon (x-x_0)\cdot \nu_0=0\}$,
$x_0$ and $\nu_0$ are the center and the axis of the cylinder $\Omega_0$, and $\phi$
is the function describing $\Sigma_0$ as a graph according to \eqref{Ngraph}.

We take a sequence of functions $\chi_k$ as in Lemma~\ref{lm:qa0}.
Then $z_{i,k}= |v_i-w_i|^p\chi_k$ defines an equibounded sequence in $W^{1,p}_0(\Omega_0)$.
Indeed, by Lemma~\ref{lm:qa0} the function $z_{i,k}$ vanishes out of a compact set contained in $U$, hence
in $\Omega_0$;
moreover, both $v_i$ and $w_i$ are defined from $u_i$ by a norm-preserving endomorphism of
$W^{1,p}(\Omega_0)$ and $u_i$ is equibounded in $W^{1,p}(\Omega_0)$ by \eqref{Ac}, \eqref{qa1},
and \eqref{qa2}. Therefore, by possibly passing to a subsequence (not relabelled) we may assume that
$z_{i,k}\to \chi_k $, as $i\to\infty$, weakly in $W^{1,p}_0(\Omega_0)$. Since
the $p$-quasi continuous representative of $z_{i,k}$ is $[u_i]^p\tilde \chi_k$ from \eqref{qa3} we deduce
that
\begin{equation}
\label{qa4}
\int_{U\cap \Sigma_0}\tilde \chi_k\psi^k\,d\sigma
=
\lim_{i\to\infty} \int_{A_i\cap \Sigma_0} [u_i]^p\tilde \chi_k\psi^k\,d\sigma
\le \lim_{i\to\infty} \int_{A_i\cap \Sigma_0} [u_i]^p\,d\mu\,,
\end{equation}
where we also used the fact that according to Lemma~\ref{lm:qa0} $\chi_k=0$ $p$-q.e.\ in $\Omega_0\setminus U$
and the equivalence of $\mu$ to $\psi\sigma$.

Combining \eqref{qa4} with \eqref{qa1} and \eqref{qa2}, for every $k>0$, we obtain
\begin{equation*}
\int_{U\cap \Sigma_0}\tilde \chi_k
\psi^k\,d\sigma
\le \lim_{i\to\infty} \widehat m_{\rho_i}(U,\mu)\,.
\end{equation*}
Since by Lemma~\ref{lm:qa0} the sequence $\chi_k$ converges to
$1$ $p$-q.e.\ in $U$, letting $k\to\infty$ in the above inequality, by the monotone convergence theorem we deduce that
\begin{equation*}
	\mu(U)\le \lim_{i\to\infty} \widehat m_{\rho_i}(U,\mu)\,,
\end{equation*}
up to subsequences. The desired inequality follows from the arbitrariness of the infinitesimal sequence $(\rho_i)$ and the proposition is proved.
\end{proof}

\subsection{Equivalence with localised minimum problems}

Let $(K_j)$ be a sequence of compact sets satisfying \eqref{eq:1.1sec3}. 
We take  $\rho_j=\max\{{\rm dist}(x,M)\colon x\in K_j\}$, so that $\rho_j\to0^+$ as $j\to\infty$. 
For $A\in\mathscr{A}(\Omega_0)$, $\rho\in\left(0,\tfrac{r_0}{2}\right)$, and for all $j$ with
$\rho_j<\rho$ 
we set
\begin{equation}
\label{4}
m_{\rho,j}(A) :=
\displaystyle\min_{v\in\mathcal{V}_{\rho,j}(A)}
\int_{A\cap S_\rho\meno K_j} f(\nabla v) \,dx \,,
\end{equation}
where
$\mathcal{V}_{\rho,j}(A)$ is the set of all functions $v\in L^{1,p}(A\cap S_\rho\meno K_j)$
such that $v= 1$ $\mathscr{H}^{n-1}$-a.e.\ on $\Sigma_\rho^+\cap A$
and $v=0$ $\mathscr{H}^{n-1}$-a.e. on $\Sigma_\rho^-\cap A$. The existence of
the minimum in \eqref{4} follows by the direct methods of the Calculus of Variations. By truncation every minimiser of \eqref{4} is bounded by 0 and 1 and then it belongs to $W^{1,p}(A\cap S_\rho\meno K_j)$. Moreover, 
we set
\begin{equation}\label{m''}
m'_\rho(A) :=\liminf_{j\to\infty} m_{\rho,j}(A)\quad
\text{and}\quad
m''_\rho(A) :=\Limsup_{j\to\infty} m_{\rho,j}(A)\,.
\end{equation}

We have the following result.
\begin{lm}\label{ultimolemma}
Let $A', A''\in \mathscr A(\Omega)$ with $A'\subset \subset A''$.
Assume that $(\mathscr F_{K_j}(\cdot,A'))$ $\Gamma$-converge in $L^0(\Omega)$ to $\mathscr F^\mu(\cdot,A')$ and $(\mathscr F_{K_j}(\cdot,A''))$ $\Gamma$-converge in $L^0(\Omega)$ to $\mathscr F^\mu(\cdot,A'')$.
Then, for every $\rho>0$
\[
 m_\rho'(A')\ge m_\rho(A',\mu) \quad \text{and} \quad m''_\rho(A')\le m_\rho(A'',\mu)\,.
\]
\end{lm}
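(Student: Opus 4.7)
The plan is to prove the two inequalities separately, relying on the $\Gamma$-liminf inequality on $A'$ for the lower bound and on a De Giorgi slicing construction applied to the $\Gamma$-limsup recovery sequence on $A''$ for the upper bound.

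\smallskip

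For the first inequality I would argue as follows. We may assume $m_\rho'(A')<\infty$. Along a subsequence realising the $\liminf$, take minimisers $v_j\in\mathcal V_{\rho,j}(A')$ of \eqref{4}, extended by the constants $1$ above $\Sigma_\rho^+$ and $0$ below $\Sigma_\rho^-$ to obtain $\tilde v_j\in L^{1,p}(A'\setminus K_j)$ with $\nabla\tilde v_j$ supported in $S_\rho$ and $\mathscr F_{K_j}(\tilde v_j,A')=m_{\rho,j}(A')$. Truncating between $0$ and $1$ and invoking the coercivity \eqref{Ac}, I extract a further subsequence with $\tilde v_j\to\tilde v$ in $L^0(\Omega)$. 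The $\Gamma$-liminf inequality on $A'$ then yields $\mathscr F^\mu(\tilde v,A')\le m_\rho'(A')$. Since $\tilde v$ inherits the constant values $1$ above $\Sigma_\rho^+$ and $0$ below $\Sigma_\rho^-$, and these surfaces lie away from $M$, the function $\tilde v$ is Sobolev near $\Sigma_\rho^\pm$ with traces $1$ and $0$ respectively; hence $\tilde v\in\mathcal V_\rho(A')$. Because $\nabla\tilde v=0$ outside $S_\rho$ and $\mu$ is concentrated on $M\subset\Sigma_0$, we have $\mathscr F^\mu(\tilde v,A')\ge m_\rho(A',\mu)$, proving the claim.

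\smallskip

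For the second inequality, let $v\in\mathcal V_\rho(A'')$ be optimal for $m_\rho(A'',\mu)$ and let $\tilde v$ be its extension by $1$ above $\Sigma_\rho^+$ and $0$ below, so that $\tilde v\in L^{1,p}(A''\setminus M)$ with $0\le\tilde v\le 1$ and $\mathscr F^\mu(\tilde v,A'')=m_\rho(A'',\mu)$. The $\Gamma$-convergence on $A''$ together with Remark~\ref{rmq:trunc} furnishes a sequence $\tilde v_j\to\tilde v$ in $L^0(\Omega)$ with $0\le\tilde v_j\le 1$ and $\Limsup_j\mathscr F_{K_j}(\tilde v_j,A'')\le m_\rho(A'',\mu)$; by dominated convergence $\tilde v_j\to\tilde v$ also in $L^p(A'')$. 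To reinstate the boundary conditions of $\mathcal V_{\rho,j}(A')$, I use a De Giorgi joining device. Fix $\varepsilon>0$ and $N\in\mathbb N$, and for $k=1,\dots,N$ choose a smooth cut-off $\chi_{\varepsilon,k}$ depending only on the normal coordinate $r-\phi(\bar x)$, equal to $1$ for $|r-\phi|\le\rho-\varepsilon+(k-1)\varepsilon/N$, vanishing for $|r-\phi|\ge\rho-\varepsilon+k\varepsilon/N$, with $|\nabla\chi_{\varepsilon,k}|\le CN/\varepsilon$. For $j$ large enough that $K_j\subset S_{\rho-\varepsilon}$, the function $w_{j,k}:=\chi_{\varepsilon,k}\tilde v_j+(1-\chi_{\varepsilon,k})\tilde v$ coincides with $\tilde v$ in a two-sided neighbourhood of $\Sigma_\rho^\pm$ and so belongs to $\mathcal V_{\rho,j}(A')$. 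The elementary bound $f(\nabla w_{j,k})\le C(f(\nabla\tilde v_j)+f(\nabla\tilde v)+(N/\varepsilon)^p|\tilde v_j-\tilde v|^p)$ on the transition slab $T_k:=\{\rho-\varepsilon+(k-1)\varepsilon/N<|r-\phi|<\rho-\varepsilon+k\varepsilon/N\}$, together with the splitting of $\int_{A'\cap S_\rho\setminus K_j}f(\nabla w_{j,k})$ into contributions from $\{\chi_{\varepsilon,k}=1\}$, $\{\chi_{\varepsilon,k}=0\}$, and $T_k$, after summation and averaging in $k$ produces an index $k_j$ for which
\begin{equation*}
m_{\rho,j}(A')\le (1+C/N)\big(\mathscr F_{K_j}(\tilde v_j,A'')+\alpha(\varepsilon)\big)+CN^{p-1}\varepsilon^{-p}\|\tilde v_j-\tilde v\|_{L^p(A'')}^p,
\end{equation*}
with $\alpha(\varepsilon):=\int_{A'\cap(S_\rho\setminus S_{\rho-\varepsilon})}f(\nabla\tilde v)\to 0$ as $\varepsilon\to 0^+$. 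Taking $\Limsup$ in $j$, then $N\to\infty$, then $\varepsilon\to 0^+$ yields $m_\rho''(A')\le m_\rho(A'',\mu)$.

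\smallskip

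The main obstacle is the boundary-value adjustment in the second inequality: the recovery sequence from the $\Gamma$-limsup converges only in $L^0(\Omega)$, so the Sobolev traces of $\tilde v_j$ on $\Sigma_\rho^\pm\cap A'$ are not controlled, whereas competitors for $m_{\rho,j}(A')$ need prescribed traces there. The cut-off $\chi_{\varepsilon,k}$ generates an error term $(\tilde v_j-\tilde v)\nabla\chi_{\varepsilon,k}$ whose energy contribution would behave like $\varepsilon^{-p}$; the De Giorgi slicing through $N$ nested layers averages this error against the limsup bound on $A''$ and produces a gain of order $1/N$. The hypothesis $A'\subset\subset A''$ enters precisely to embed the set $\{\chi_{\varepsilon,k}=1\}\cap A'$ into $A''$, thereby bounding its contribution by $\mathscr F_{K_j}(\tilde v_j,A'')$.
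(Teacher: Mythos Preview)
Your proof of both inequalities is correct and, for the first one, essentially identical to the paper's: extract minimisers, pass to a limit using the $\Gamma$-liminf inequality on $A'$, and check that the limit is admissible for $m_\rho(A',\mu)$ via compactness of the trace operator.

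For the second inequality your route differs from the paper's. The paper uses a \emph{single} cut-off $\varphi\in C_c^\infty(S_\rho)$ with $\varphi=1$ on $S_{\rho/2}$, sets $v_j=\varphi u_j+(1-\varphi)u$, and controls the cross term $(u_j-u)\nabla\varphi$ directly by $\rho^{-p}\|u_j-u\|_{L^p}^p$, which vanishes as $j\to\infty$; a parameter $\varepsilon$ in the convexity splitting and then monotone convergence to remove the leftover $(1-\varphi)f(\nabla u)$ term finishes the argument. Your De~Giorgi averaging over $N$ nested slabs is a more robust device---it would survive even without the $L^p$ convergence that you yourself establish via truncation and dominated convergence---but precisely because you \emph{do} have that $L^p$ convergence, the slicing is redundant: a single cut-off at height $\rho-\varepsilon$ already makes the error term $C\varepsilon^{-p}\|\tilde v_j-\tilde v\|_{L^p}^p$ vanish in the limit, and only the two limits $\varepsilon\to0$ (to kill $\alpha(\varepsilon)$) would remain. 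One minor remark: the compact containment $A'\subset\subset A''$ plays no essential role in your argument as written, since $A'\subset A''$ already suffices for the energy comparison and the $L^p$ convergence you use holds on all of $A''$ by domination; the paper invokes it to fit a Lipschitz intermediate set, though with uniform $L^\infty$ bounds this too is not strictly needed.
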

\begin{proof}
To prove the first inequality, let $(u_j)\subset L^{1,p}(A' \meno K_j)$ be a sequence of minimisers of the minimum problems \eqref{4}. 
By \eqref{Ac} it is not restrictive to assume this sequence to be equibounded in $W^{1,p}(A' \meno K_j)\cap L^\infty(A')$. Then there exists a function $u\in W^{1,p}(A'\meno M)\cap L^\infty(A')$ and a subsequence of $(u_j)$, not relabelled, 
converging to $u$ weakly in $W^{1,p}(A'\meno M_r)$ for every $r>0$,
where 
\(
M_r = \left\{ x\in \Omega \colon {\rm dist}(x,\Sigma)\le\rho \right\}
\). Therefore, $(u_j)$ converges  in $L^0(\Omega)$  to a function $u\in L^\infty(A')$.
Moreover, by the $\Gamma$-convergence assumption
\begin{equation*}
m'_\rho(A')=\liminf_{j\to\infty} \int_{A'\cap S_\rho \setminus K_j} f(\nabla u_j)\,dx\ge \int_{A'\cap S_\rho\setminus M}f(\nabla u)\,dx+ \int_{A' \cap M} [u]^p\,d\mu\,.
\end{equation*}
Since the trace operator is compact, the function $u$ belongs to $\mathcal V_\rho(A')$ and thus it is admissible in the minimisation problem \eqref{mrhoa}. This proves the first inequality.

To prove the the second inequality we assume $m_\rho(A'',\mu)$ to be finite and let $u$ one of its minimisers. By the $\Gamma$-convergence assumption there exists a sequence $(u_j)\subset W^{1,p}(A''\setminus K_j)\cap L^\infty(A'')$ converging to $u$ in $L^0(\Omega)$ such that
\begin{equation}
\label{recseqqui}
m_\rho(A'',\mu)=\mathscr F^\mu(u,A'') = \lim_{j\to\infty}  \int_{A''\cap S_\rho \meno K_{j}} f(\nabla u_j)\,dx \\\,. 
\end{equation}
We consider now a cut-off function $\varphi\in C^{\infty}_c(S_\rho)$ such that $0\le\varphi\le1$, $\varphi=1$
in $S_{\rho/2}$, $\varphi=0$ out of $S_\rho$, and $|\nabla\varphi |\le \frac{C}{\rho}$ for a suitable constant $C>0$. We consider the function
$v_j =  \varphi u_j+ (1-\varphi)u$ so that $v_j\in \mathcal{V}_{\rho,j}(A')$ and $\nabla v_j = (\varphi \nabla u_j+(1-\varphi)\nabla u)+(u_j-u)\nabla \varphi$.
By \eqref{4} and \eqref{A} for every $\varepsilon>0$
we obtain
\begin{equation}\label{13.1}
\begin{split}
& m_{\rho,j}(A')\le \int_{A'\cap S_\rho \meno K_{j}} f(\nabla v_j)\,dx \\
&\le \frac{1}{{(1-\varepsilon)}^{p-1}}\int_{A'\cap S_\rho \setminus K_j}f(\varphi \nabla u_j +(1-\varphi)\nabla u)\, dx+\frac{C^p\Lambda}{\rho^p \varepsilon^{p-1}}\int_{A'\cap S_\rho \setminus K_j}|u_j-u|^p\, dx\,.\\
\end{split}
\end{equation}
Next, we osberve that by \eqref{Ab} we have
\[
\int_{A'\cap S_\rho \setminus K_j}f(\varphi \nabla u_j +(1-\varphi)\nabla u)\, dx\le
\int_{A''\cap S_\rho \setminus K_j}\varphi f(\nabla u_j )\, dx+
\int_{A''\cap S_\rho \setminus K_j}(1-\varphi)f(\nabla u)\, dx\,,
\]
where we also used that $A'\subset A''$. Let now $\widehat A\in \mathcal A(\Omega)$ be a Lipschitz set with $A'\subset \widehat A \subset A''$. Then, 
$u_j$ converges to $u$ in $L^p(\widehat A)$, hence
\[
\lim_{j\to\infty}\int_{\widehat{A}\cap S_\rho \setminus K_j}|u_j-u|^p\, dx=0\,.
\]
Using the last two inequalities and the fact that $\varphi\le1$, from \eqref{13.1} we obtain
\[
\limsup_{j\to\infty}m_{\rho,j}(A') \le\frac{1}{{(1-\varepsilon)}^{p-1}} \Big[\limsup_{j\to\infty} \int_{A''\cap S_\rho \setminus K_j} f(\nabla u_j )\, dx
+ \int_{A''\cap S_\rho \setminus M}(1-\varphi)f(\nabla u)\, dx\Big]\,.
\]
Since $\varphi$ was arbitrary, by the monotone convergence theorem we deduce that
\[
\limsup_{j\to\infty} m_{\rho,j}(A')\le \frac{1}{{(1-\varepsilon)}^{p-1}} \limsup_{j\to\infty} \int_{A''\cap S_\rho \meno K_{j}} f(\nabla u_j)\,dx\,.
\]
This combined with \eqref{recseqqui} and the arbitrariness of $\varepsilon$ provides the conclusion.
\end{proof}

\begin{teo}\label{equiteo}
The following three conditions are equivalent.
\begin{enumerate}[a)]
\item \emph{($\Gamma$-convergence in $\Omega$).} The sequence of functionals $(\mathscr F_{K_j}(\cdot, \Omega))$ defined by \eqref{eq:1.2} $\Gamma$-converge in $L^0(\Omega)$
 to a functional $\mathscr F:L^0(\Omega)\to[0,\infty]$.
\item \emph{($\Gamma$-convergence on a rich family).} There exist a measure $\mu\in \mathcal{M}_p(\Omega;M)$ and a rich family $\mathscr R\subset \mathscr A(\Omega)$ such that the sequence of functionals $(\mathscr F_{K_j}(\cdot,A))$ defined by \eqref{eq:1.2} $\Gamma$-converges in $L^0(\Omega)$ to the functional $\mathscr F^\mu(\cdot,A)$ defined by \eqref{FmuA} for all $A\in\mathscr R$. 
\item \emph{(Equality condition of localized minimum problems).} Let $\{\Omega_i\}$ be an open covering of $M$ consisting of cylinders of the form \eqref{cilynder} for some $x_i\in M$, $\nu_i\in \mathbb{R}^n$ with $|\nu_i|=1$ and, $r_i>0$. Then
\begin{equation}\label{thirdcondition}
 \sup_{\substack{A'\in \mathscr{A}(\Omega) \\ A' \subset\subset A }} m_\rho'(A')= \sup_{\substack{A'\in \mathscr{A}(\Omega) \\ A' \subset\subset A }} m_\rho''(A')\,,
\end{equation}
for every open set $A\subset \Omega_i$ and every $\rho\in (0,\frac{r_i}{2})$. Here $m'$ and $m''$ are defined by \eqref{m''} with $\Omega_i$ in place of $\Omega_0$ and the graph $\Sigma_i=\Omega_0\cap \Sigma$ defined as in \eqref{Ngraph}.
\end{enumerate}
\end{teo}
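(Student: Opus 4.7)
The plan is to close the implication cycle (a)$\Rightarrow$(b)$\Rightarrow$(c)$\Rightarrow$(a), relying on the compactness Theorem~\ref{mainthm}, the comparison estimates of Lemma~\ref{ultimolemma}, the reconstruction formula of Proposition~\ref{prop:cella2}, and the Urysohn subsequence principle for $\Gamma$-convergence on the metrisable space $L^0(\Omega)$.

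For (a)$\Rightarrow$(b), I would first apply Theorem~\ref{mainthm} to extract a subsequence of $(K_j)$ and $\mu,\mathscr R$ with $\Gamma$-convergence to $\mathscr F^\mu(\cdot,A)$ on every $A\in\mathscr R$; since $\Omega\in\mathscr R$, hypothesis (a) forces $\mathscr F^\mu(\cdot,\Omega)=\mathscr F$. Any other subsequence of $(K_j)$ admits, by the same theorem, a further sub-subsequence with limit $\mathscr F^{\mu'}$ satisfying $\mathscr F^{\mu'}(\cdot,\Omega)=\mathscr F$ by (a); Definition~\ref{dfn:equiv} gives $\mu'\sim\mu$, and by \eqref{equivmisA} the associated functionals coincide on every open $A$. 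Since the family $\mathscr R_0:=\{A\in\mathscr A(\Omega):C_{1,p}(\partial A\cap M)=0\}$ is rich and is included in every rich family produced by Theorem~\ref{mainthm}, every subsequence of $(K_j)$ has a further sub-subsequence $\Gamma$-converging at each fixed $A\in\mathscr R_0$ to the same limit $\mathscr F^\mu(\cdot,A)$; the Urysohn subsequence principle then yields (b) with $\mathscr R=\mathscr R_0$.

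For (b)$\Rightarrow$(c), one inequality in \eqref{thirdcondition} is trivial. For the other, fixing $A'\subset\subset A\subset\Omega_i$ and using the richness of $\mathscr R$ from (b) to pick $A'',A'''\in\mathscr R$ with $A'\subset\subset A''\subset\subset A'''\subset\subset A$, the monotonicity of $m''_\rho$ and Lemma~\ref{ultimolemma} yield
\[
m''_\rho(A')\le m''_\rho(A'')\le m_\rho(A''',\mu)\le m_\rho(A,\mu),
\]
while the inner regularity \eqref{inner} of $m_\rho(\cdot,\mu)$ combined with the other estimate of Lemma~\ref{ultimolemma} gives $m_\rho(A,\mu)\le\sup_{B\subset\subset A}m'_\rho(B)$. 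Taking $\sup_{A'\subset\subset A}$ concludes.

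For (c)$\Rightarrow$(a), the strategy is to show that all measures produced by Theorem~\ref{mainthm} from different sub-subsequences lie in a single equivalence class; the Urysohn principle will then give $\Gamma$-convergence of the whole sequence at $\Omega$. Given two such sub-subsequences with measures $\mu_1,\mu_2$, let $m'_{\rho,\ell},m''_{\rho,\ell}$ denote the analogues of \eqref{m''} along the $\ell$-th sub-subsequence. Since $m'_\rho\le m'_{\rho,\ell}\le m''_{\rho,\ell}\le m''_\rho$ and (c) forces the extreme sides to have the same $\sup_{A'\subset\subset A}$, the same identity holds for the middle two; reprising the (b)$\Rightarrow$(c) argument for each sub-subsequence, this common value equals $m_\rho(A,\mu_\ell)$ for every open $A\subset\Omega_i$. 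Hence $m_\rho(A,\mu_1)=m_\rho(A,\mu_2)$, so $\widehat m_\rho(U;\mu_1)=\widehat m_\rho(U;\mu_2)$ for every $p$-quasi open $U$ contained in some $\Omega_i$, and Proposition~\ref{prop:cella2} yields $\mu_1(U)=\mu_2(U)$ on such $U$. A finite-cover argument (using compactness of $M$, the concentration of each $\mu_\ell$ on $M$, and the stability of $p$-quasi openness under finite unions and intersections) propagates this equality to every $p$-quasi open $U\subset\Omega$, so Lemma~\ref{lm:criterionequiv} gives $\mu_1\sim\mu_2$.

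The main obstacle will be the implication (c)$\Rightarrow$(a): the whole-sequence information encoded in (c) must be reconciled with Lemma~\ref{ultimolemma}, which is naturally a subsequence statement, in order to pin down $m_\rho(\cdot,\mu_\ell)$ uniquely from $(K_j)$; the local identity must then be lifted to the measures via Proposition~\ref{prop:cella2}, and finally a finite-cover gluing is required to promote local equivalence on each $\Omega_i$ to global equivalence on $\Omega$, taking care that the capacitary measures may take the value $+\infty$.
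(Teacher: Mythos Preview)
Your proof follows the paper's cycle (a)$\Rightarrow$(b)$\Rightarrow$(c)$\Rightarrow$(a) with the same tools (Theorem~\ref{mainthm}, Lemma~\ref{ultimolemma}, the inner regularity \eqref{inner}, and the Urysohn principle). In (c)$\Rightarrow$(a) you are more explicit than the paper: you spell out, via Proposition~\ref{prop:cella2}, a finite-cover argument, and Lemma~\ref{lm:criterionequiv}, why the identity $m_\rho(A,\mu_1)=m_\rho(A,\mu_2)$ on all open $A\subset\Omega_i$ forces $\mu_1\sim\mu_2$, whereas the paper compresses this entire passage into the phrase ``and so also the measure $\mu$''. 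Your sandwiching observation $m'_\rho\le m'_{\rho,\ell}\le m''_{\rho,\ell}\le m''_\rho$ is exactly the paper's remark that ``passing to subsequences, $m'_\rho$ can only increase while $m''_\rho$ only decrease''.

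One caveat on (a)$\Rightarrow$(b): you assert that $\mathscr R_0:=\{A:C_{1,p}(\partial A\cap M)=0\}$ is itself a rich family. Theorem~\ref{mainthm} only guarantees that $\mathscr R_0$ is \emph{contained} in each rich family it produces; that $\mathscr R_0$ is rich is a separate statement that needs a line of justification (e.g.\ via a finite Radon measure on $M$ whose null sets coincide with the $C_{1,p}$-null subsets of $M$). The paper's own proof is equally terse at this spot---it jumps from ``$\mu$ is independent of the subsequence'' to the conclusion without naming the common rich family---so this is a shared wrinkle rather than a defect peculiar to your plan.
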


\begin{proof}
We assume (a) and prove (b).
To this aim, we fix a subsequence (not relabelled) of compact sets $(K_j)$
and we consider the sequence of functionals $(\mathscr F_{K_j})$ defined by \eqref{eq:1.2} accordingly.
By Theorem~\ref{mainthm} there exist a measure $\mu\in\mathcal{M}_p(\Omega;M)$ and a rich family $ \mathscr R\subset \mathscr A(\Omega)$ such that for every $A\in\mathscr R$ a subsequence of  $(\mathscr F_{K_j}(\cdot,A))$  $\Gamma$-converge in $L^0(\Omega)$ to the functional $\mathscr F^{\mu}(\cdot,A)$ defined by \eqref{FmuA}. Thanks to the Urysohn property (see~\cite[Proposition 8.3]{DMb}), to get
the conclusion it is enough to make sure that $\mu$ is independent on the subsequence chosen at the beginning.
This follows since by assumption the $\Gamma$-limit of $(\mathscr F_{K_j}(\cdot, \Omega))$ do not depend on the choice of the subsequence; hence the measure $\mu$ is uniquely determined (up to equivalence).

We now assume (b) and prove (c). By Lemma~\ref{ultimolemma} and \eqref{m''} for every $A\in \mathscr A(\Omega)$ and every $A',A''\in\mathscr R$ such that $ A'\subset\subset A''\subset\subset A\subset\subset \Omega_i$ we have 
\[
m_\rho(A',\mu)\le m_\rho'(A')\le m_\rho''(A')\le m_\rho(A'',\mu)\le m_\rho(A,\mu).
\]
These inequalities combined with \eqref{inner} and \cite[Remark 14.9 and 14.13]{DMb} yield
\[
m_\rho(A,\mu)= \sup_{\substack{A'\in \mathscr{R} \\ A' \subset\subset A }} m_\rho(A',\mu)\le  \sup_{\substack{A'\in \mathscr{R} \\ A' \subset\subset A }} m_\rho'(A')\le \sup_{\substack{A'\in \mathscr{R} \\ A' \subset\subset A }} m_\rho''(A')\le m_\rho(A,\mu)\,,
\]
so that these inequalities are in fact equalities and \eqref{thirdcondition} holds.

We conclude the chain of implications by assuming c) and proving a). To this aim, we fix a subsequence (not relabelled) of compact sets $(K_j)$
and we consider the sequence of functionals $(\mathscr F_{K_j}(\cdot, \Omega))$ defined by \eqref{eq:1.1sec3} accordingly. 
By Theorem~\ref{mainthm} there exist a measure $\mu\in\mathcal{M}_p(\Omega;M)$ and a rich family $ \mathscr R\subset \mathscr A(\Omega)$ with $\Omega\in\mathscr R$ such that
 a subsequence of
 $(\mathscr F_{K_j}(\cdot,A))$
 $\Gamma$-converge
in $L^0(\Omega)$ to the functional $\mathscr F^{\mu}(\cdot,A)$ for every $A\in\mathscr R$. 
To get the thesis stated in a), by the Urysohn property (see~\cite[Proposition 8.3]{DMb}) it is again sufficient to prove that the measure $\mu$ is independent on the subsequence chosen at the beginning. To see this first notice that, 
by arguing as in the previous implication one deduces that $m_\rho(A,\mu)$ as defined in \eqref{mrhoa} coincides with \eqref{thirdcondition} for all open sets $A\subset \Omega_i$ and all $\rho\in (0,\frac{r_i}{2})$. Moreover, passing to subsequences, the value $m'_\rho(A)$ can only increase while $m''_\rho(A)$ only decrease (see \eqref{m''}). This with the fact that $m'_\rho(A)\le m''_\rho(A)$ implies that the assumption \eqref{thirdcondition} is not affected by the passage to subsequences and so also the measure $\mu$.  The theorem is then proved.
\end{proof}

As a consequence of the results proved in this section we deduce a cell formula to construct every limit measure  appearing in the conclusion of Theorem~\ref{mainthm}.

\begin{cor}
Let $\mu\in \mathcal M_p(\Omega;M)$. Assume that $(\mathscr F_{K_j}(\cdot,\Omega))$ $\Gamma$-converge in $L^0(\Omega)$ to $\mathscr F^\mu(\cdot,\Omega)$. 
Then there exists a rich family of open sets $\mathscr R\subset \mathscr A(\Omega)$ such that, for every $p$-quasi open set $U\subset \Omega_0$ with $\Omega_0$ a cylinder as in \eqref{cilynder} with $x\in M$ it holds  
\[
\mu(U)=\sup_{\rho>0} \inf_{\substack{A\in \mathscr A(\Omega_0)\\ A\supset U}} \sup_{\substack{A'\in \mathscr R\\ A'\subset \subset A}} \lim_{j\to\infty} \min_{u\in \mathcal V_{\rho, j}(A')} \int_{A'\cap S_\rho\setminus K_j} f(\nabla u)\, dx\,,
\]
where $\mathcal{V}_{\rho,j}(A)$ denotes the set of all non-negative functions $v\in L^{1,p}(A\cap S_\rho\meno K_j)$
with $v= 1$ $\mathscr{H}^{n-1}$-a.e.\ on $A\cap \Sigma_\rho^+$
and $v=0$ $\mathscr{H}^{n-1}$-a.e. on $A\cap \Sigma_\rho^-$.
\end{cor}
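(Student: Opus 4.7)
The plan is to splice together Proposition~\ref{prop:cella2}, the inner regularity~\eqref{inner}, and Lemma~\ref{ultimolemma}, and then to refine the rich family supplied by Theorem~\ref{mainthm} so that the limits $\lim_j m_{\rho,j}(A')$ appearing in the claim genuinely exist. First I would apply Proposition~\ref{prop:cella2} in the cylinder $\Omega_0$ to write
\[
\mu(U)\,=\,\sup_{\rho>0}\widehat m_\rho(U,\mu)\,=\,\sup_{\rho>0}\ \inf_{\substack{A\in\mathscr A(\Omega_0)\\ A\supset U}} m_\rho(A,\mu),
\]
which already reproduces the outer $\sup_\rho\,\inf_A$ in the target identity, so the remaining task is to show that, for every such pair $(\rho,A)$, one has $m_\rho(A,\mu) = \sup_{A'\in\mathscr R,\,A'\subset\subset A} \lim_j m_{\rho,j}(A')$ for a suitable rich family $\mathscr R$.

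Starting from the rich family supplied by Theorem~\ref{mainthm} applied to the hypothesised sequence $(K_j)$, the inner regularity~\eqref{inner} combined with richness yields $m_\rho(A,\mu)=\sup_{A'\in\mathscr R,\,A'\subset\subset A} m_\rho(A',\mu)$; for each such $A'$, by richness I would insert an intermediate $A''\in\mathscr R$ with $A'\subset\subset A''\subset\subset A$ and invoke Lemma~\ref{ultimolemma} to obtain the sandwich
\[
m_\rho(A',\mu)\,\le\,\liminf_{j\to\infty} m_{\rho,j}(A')\,\le\,\limsup_{j\to\infty} m_{\rho,j}(A')\,\le\,m_\rho(A'',\mu)\,\le\,m_\rho(A,\mu).
\]
Passing to the sup over $A'\in\mathscr R$, $A'\subset\subset A$, shows already that both $\sup_{A'}\liminf_j m_{\rho,j}(A')$ and $\sup_{A'}\limsup_j m_{\rho,j}(A')$ coincide with $m_\rho(A,\mu)$.

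The main obstacle is to upgrade these $\liminf$/$\limsup$ bounds to a genuine limit $\lim_j m_{\rho,j}(A')$ on a rich family. My plan is to exploit two monotonicities. Along any chain $\{A_t\}_{t\in\mathbb R}\subset\mathscr A(\Omega)$ with $A_s\subset\subset A_t$ for $s<t$, the function $t\mapsto m_\rho(A_t,\mu)$ is non-decreasing, hence continuous outside a countable subset of $\mathbb R$; at every continuity point $t$ with $A_t\in\mathscr R$, letting $A''=A_{t'}\in\mathscr R$ shrink to $A_t$ from above in the sandwich forces $\liminf_j m_{\rho,j}(A_t)=\limsup_j m_{\rho,j}(A_t)=m_\rho(A_t,\mu)$. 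Consequently, for each fixed $\rho>0$ the subfamily
\[
\mathscr R_\rho\,:=\,\bigl\{A\in\mathscr R\,\colon\,\lim_{j\to\infty} m_{\rho,j}(A)\text{ exists and equals }m_\rho(A,\mu)\bigr\}
\]
is itself rich, and taking $\mathscr R':=\bigcap_n \mathscr R_{\rho_n}$ over a dense sequence $(\rho_n)\subset(0,r_i/2)$ gives a rich family on which all relevant limits exist. Since $\rho\mapsto m_\rho(A,\mu)$ is itself non-increasing (a one-sided constant extension of competitors across $S_\rho\setminus S_{\rho'}$ shows $m_\rho\le m_{\rho'}$ when $\rho'<\rho$), the outer $\sup_{\rho>0}$ is already realised along $(\rho_n)$, and combining this with the sandwich along $\mathscr R'$ closes the chain of identities and produces the claimed cell formula. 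The delicate technical point will be ensuring that the refined rich family $\mathscr R'$ is still large enough to realise the sups appearing in~\eqref{inner}.
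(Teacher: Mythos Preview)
Your approach is essentially the paper's: the authors simply cite Proposition~\ref{prop:cella2} and Theorem~\ref{equiteo}, and the latter's implication (a)$\Rightarrow$(b)$\Rightarrow$(c) unpacks to exactly your use of Theorem~\ref{mainthm}, the inner regularity~\eqref{inner}, and the sandwich from Lemma~\ref{ultimolemma}. Your extra step of refining the rich family so that the individual limits $\lim_j m_{\rho,j}(A')$ genuinely exist is correct and in fact more careful than the paper's one-line proof, which is content with the equality $\sup_{A'}m'_\rho(A')=\sup_{A'}m''_\rho(A')$ from~\eqref{thirdcondition} and leaves the pointwise existence of the limit implicit; your monotonicity-and-continuity argument along chains, followed by a countable intersection over a dense sequence of $\rho$'s, is a standard and valid way to secure this. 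Finally, the ``delicate technical point'' you flag at the end is not delicate: any rich family automatically interpolates between $A'\subset\subset A$ (run a chain of sublevel sets of ${\rm dist}(\cdot,A')$), so the refined $\mathscr R'$ still realises the inner-regular sups.
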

\begin{proof}
The result follows by Proposition~\ref{prop:cella2} and by Theorem~\ref{equiteo}.
\end{proof}

\end{document}